\documentclass[12pt]{amsart}
\usepackage{amsmath, amssymb, amsthm, amsfonts, enumerate, verbatim}
\usepackage[all]{xy}
\usepackage{tikz}
%
%
\tikzstyle{punkt}=[circle, fill=black, minimum size=1mm,inner sep=0pt, draw]
%
%
%
\def\NZQ{\mathbb}               
\def\NN{{\NZQ N}}

\def\ZZ{{\NZQ Z}}
\def\RR{{\NZQ R}}

%
%
\def\frk{\frak}               

\def\Phi{{\frk n}}
\def\Phi{{\frk N}}
%

\def\KK{{\mathbb K}}
%

%
\def\opn#1#2{\def#1{\operatorname{#2}}} 
%
\opn\chara{char}
\opn\length{\ell}
\opn\pd{pd}
\opn\rk{rk}
\opn\projdim{proj\,dim}
\opn\injdim{inj\,dim}
\opn\rank{rank}
\opn\depth{depth} 
\opn\grade{grade}
\opn\height{height}
\opn\embdim{emb\,dim} 
\opn\codim{codim}

\opn\Tr{Tr}
\opn\bigrank{big\,rank}
\opn\superheight{superheight}
\opn\lcm{lcm}
\opn\trdeg{tr\,deg}
\opn\reg{reg}
\opn\lreg{lreg}
\opn\ini{in}
\opn\lpd{lpd}
\opn\size{size}
\opn\bigsize{bigsize}
\opn\cosize{cosize}
\opn\bigcosize{bigcosize}
\opn\sdepth{sdepth}
\opn\sreg{sreg}
\opn\link{link}
\opn\fdepth{fdepth}
\opn\lin{lin}
\opn\ini{in}
%
\opn\div{div}
\opn\Div{Div}
\opn\cl{cl}
\opn\Cl{Cl}
%
%
\opn\Spec{Spec}
\opn\Supp{Supp}
\opn\supp{supp}
\opn\Sing{Sing}
\opn\Ass{Ass}
\opn\Min{Min}
\opn\Mon{Mon}
\opn\dstab{dstab}
\opn\astab{astab}
\opn\Syz{Syz}
%
%
\opn\Ann{Ann}
\opn\Rad{Rad}
\opn\Soc{Soc}
%
%
\opn\Im{Im}
\opn\Ker{Ker}
\opn\Coker{Coker}
\opn\Am{Am}
\opn\Hom{Hom}
\opn\Tor{Tor}
\opn\Ext{Ext}
\opn\End{End}
\opn\Aut{Aut}
\opn\id{id}

\opn\nat{nat}
\opn\pff{pf}
\opn\Pf{Pf}
\opn\GL{GL}
\opn\SL{SL}
\opn\mod{mod}
\opn\ord{ord}
\opn\Gin{Gin}
\opn\Hilb{Hilb}
\opn\sort{sort}
\opn\initial{init}
\opn\ende{end}
\opn\height{height}
\opn\type{type}
\opn\mdeg{mdeg}
%
%
\opn\aff{aff}
\opn\con{conv}
\opn\relint{relint}
\opn\st{st}
\opn\lk{lk}
\opn\cn{cn}
\opn\core{core}
\opn\vol{vol}
\opn\link{link}
\opn\star{star}
\opn\lex{lex}
\opn\sign{sign}
\opn\gr{gr}

%
%

\def\pot#1#2{#1[\kern-0.28ex[#2]\kern-0.28ex]}

%
%
\opn\dirlim{\underrightarrow{\lim}}
\opn\inivlim{\underleftarrow{\lim}}
%
%
%

\let\iso=\cong

%
%

\def\Implies{\ifmmode\Longrightarrow \else
        \unskip${}\Longrightarrow{}$\ignorespaces\fi}
\def\implies{\ifmmode\Rightarrow \else
        \unskip${}\Rightarrow{}$\ignorespaces\fi}
\def\iff{\ifmmode\Longleftrightarrow \else
        \unskip${}\Longleftrightarrow{}$\ignorespaces\fi}

\let\:=\colon
\newtheorem{Theorem}{Theorem}[section]
 \newtheorem{Lemma}[Theorem]{Lemma}
 \newtheorem{Corollary}[Theorem]{Corollary}
 \newtheorem{Proposition}[Theorem]{Proposition}
 \newtheorem{Remark}[Theorem]{Remark}
 
 \newtheorem{Example}[Theorem]{Example}
 
 \newtheorem{Definition}[Theorem]{Definition}
 \newtheorem{Problem}[Theorem]{Problem}

%
%
\let\epsilon\varepsilon
\let\kappa=\varkappa
%
%
\textwidth=15cm \textheight=22cm \topmargin=0.5cm
\oddsidemargin=0.5cm \evensidemargin=0.5cm \pagestyle{plain}
%
\def\pnt{{\raise0.5mm\hbox{\large\bf.}}}

\begin{document}
\title{Retracts and algebraic properties of cut algebras }
\author {Tim R\"omer and Sara Saeedi Madani}

\address{Tim R\"omer, Universit\"at Osnabr\"uck, Institut f\"ur Mathematik, 49069 Osnabr\"uck, Germany}
\email{troemer@uni-osnabrueck.de}

\address{Sara Saeedi Madani, Universit\"at Osnabr\"uck, Institut f\"ur Mathematik, 49069 Osnabr\"uck, Germany}
\email{sara.saeedimadani@uni-osnabrueck.de}

\begin{abstract}
We study cut algebras which are toric rings associated to graphs. The key idea is to consider suitable retracts to understand algebraic properties and invariants of such algebras like being a complete intersection, having a linear resolution, or the Castelnuovo-Mumford regularity. Throughout the paper, we discuss several examples and pose some problems as well.
\end{abstract}

\thanks{The second author was supported by the German Research Council DFG-GRK~1916.}

\subjclass[2010]{Primary 05E40, 13C05; Secondary 52B20.}
\keywords{Cut algebra, cut polytope, algebra retract, minor, regularity, linear resolution, complete intersection}

\maketitle

\section{Introduction}\label{introduction}

Let $G=(V,E)$ with $V\neq \emptyset$ be a finite simple graph (i.e.~without any loops, directed edges or multiple edges). Given two disjoint subsets $A$ and $B$ of $V$ with $V=A\cup B$, we denote by $A|B$ the (unordered) partition of $V$ by $A$ and $B$. Let $\KK$ be a field. Associated to $G$, we consider two polynomial rings over $\KK$ defined as:
\begin{eqnarray*}
S_G&:=&\KK[q_{A|B}:A\cup B=V,A\cap B=\emptyset],\\
R_G&:=&\KK[s_{ij},t_{ij}:\{i,j\}\in E].
\end{eqnarray*}
Each partition $A|B$ of $V$ defines a subset $\mathrm{Cut}(A|B)$ of the edge set $E$ which is
\[
\mathrm{Cut}(A|B):=\big{\{}\{i,j\}\in E: i\in A,j\in B~\mathrm{or}~i\in B,j\in A\big{\}.}
\]
The set $\mathrm{Cut}(A|B)$ is called a \emph{cut set} of $G$.
Now, consider the following homomorphism of $\KK$-algebras:
\begin{eqnarray*}
\phi_G \colon S_G & \rightarrow & R_G
\\
q_{A|B}& \mapsto &\prod_{\{i,j\}\in \mathrm{Cut}(A|B)}s_{ij}\prod_{\{i,j\}\in E\setminus \mathrm{Cut}(A|B)}t_{ij}.
\end{eqnarray*}
Here the letters ``$s$" and ``$t$" can be thought as abbreviations for ``separated" and ``together", respectively. Observe that the kernel $I_G$ of $\phi_G$ is a graded toric ideal which is called the \emph{cut ideal} of $G$. We call the $\KK$-subalgebra of $R_G$, which is isomorphic to $S_G/I_G$, the \emph{cut algebra} of $G$, and denote it by $\KK[G]$.

Note that the convex hull of the exponent vectors in the $\KK$-algebra generators of $\KK[G]$ is affinely isomorphic to the \emph{cut polytope} ${\mathrm{Cut}}^{\square}(G)$ which was studied a lot in combinatorial optimization; see, e.g., \cite{BM,D,DL1,DL2,N}.
Cut polytopes are known to be full-dimensional polytopes which implies that
the (Krull-)dimension of $\KK[G]$ is equal to $|E|+1$.

Cut ideals were introduced by Sturmfels and Sullivant in \cite{SS}.
In particular, if $G$ is a clique-sum of ``small" subgraphs, then a description of a generating set and Gr\"obner bases of $I_G$
were given in terms of the subgraphs.
Here, by a clique-sum, roughly we mean gluing two graphs in a common complete subgraph, and by small, we mean gluing just in a vertex, an edge or a triangle. In the same paper, the authors presented some applications to algebraic statistics by relating cut ideals to binary graph models, Markov random fields and phylogenetic models on split systems as a generalization of binary Jukes-Cantor models.

One of the most important aspects of the aforementioned paper is presenting some conjectures. More precisely, it was conjectured that $I_G$ is generated in degree at most two if and only if $G$ is a $K_4$-minor-free graph; see \cite[Conjecture~3.5]{SS}. Such graphs are also called series-parallel graphs. Another conjecture posed in \cite{SS} says that $I_G$ is generated in at most degree $4$ if and only if $G$ is $K_5$-minor-free; see \cite[Conjectures~3.6]{SS}. It is also conjectured that $\KK[G]$ is normal and Cohen-Macaulay respectively if and only if $G$ is $K_5$-minor-free; see \cite[Conjectures~3.7]{SS}. The ``only if" part of all conjectures had been verified in \cite{SS}. Recently these conjectures as well as some related questions were studied
by many authors, e.g., in \cite{BC,BHIKS,E,NP,O1,O2,Ol,PS}. Note that Engstr\"om gave an affirmative answer to the first conjecture in \cite{E}.
The other conjectures are still open. The most recent progress on the third conjecture is due to Ohsugi. In \cite{O1} he proved that normality of such algebras is a minor-closed property and he could reduce the relevant part of the third conjecture to $4$-connected plane triangulations. Later on, in \cite{O2} he gave also a characterization of graphs whose cut algebras are normal and Gorenstein.

The authors of the present paper started the study of cut algebras by considering the questions for which pair of graphs $G$ and $H$
the cut algebra of $H$ is an algebra retract of $G$. As was pointed out in \cite[Lemma~3.2~(2)]{SS} this is indeed the case
if $H$ is obtained by an edge contraction from $G$ which then was used, e.g.,
to discuss the highest degree of a minimal generator of the cut ideal
in certain situations. Indeed, in this particular case, it can be shown
that $\mathrm{Cut}^{\square}(H)$ is affinely isomorphic to a face of $\mathrm{Cut}^{\square}(G)$ which yields a particular nice
kind of retracts which are called face retracts in the following (see Corollary~\ref{contraction-retract}).

Note that in \cite[Lemma~3.2~(1)]{SS} it was also stated that
for an induced subgraph $H$ of $G$ we have that $\mathrm{Cut}^{\square}(H)$ is affinely isomorphic to a face of $\mathrm{Cut}^{\square}(G)$. Unfortunately, this is not true and we provide a counterexample to this in Example~\ref{Ohsugi}. We would like to mention that in spite of the existence of this example,
no further problems arise in the main results or other statements in \cite{SS}
besides the one (see \cite[Corollary~3.3~(1)]{SS}) that now it remains as an open problem whether the highest degree in a minimal system of generators of $I_H$ does not exceed the one of $I_G$ if $H$ is an induced subgraph of $G$.
We do not know any counterexample to this question, nor a proof of it.
But we give certain (sufficient) conditions for an induced subgraph  $H$ of a graph $G$
to induce a retract on the level of cut algebras (see Theorem~\ref{golden1}).
We give several applications of our results, concerning interesting algebraic properties and invariants of cut algebras and
cut ideals. Beside this we also discuss several examples as well as posing some
problems.

The paper is organized as follows. In Section~\ref{Notations}, we present some definitions, well-known facts and statements, and notation. This section is divided into four subsections: graphs, cut sets and algebras, polytopal algebras, and algebra retracts.

In Section~\ref{properties and examples}, we discuss some basic properties of cut algebras such as their different gradings from which we benefit in the next sections. In this section we also determine exactly  when the cut ideal is zero and when it has linear forms as generators.

In Section~\ref{contraction}, we give a new proof for the fact that the cut polytope of a graph obtained by an edge contraction of a graph $G$ is affinely isomorphic to a face of the cut polytope of $G$ (see \cite[Lemma~3.2~(2)]{SS}), which implies the existence of a face retract on the level of cut algebras.

In Section~\ref{induced}, first we discuss the aforementioned counterexample. Then we introduce a certain type of minors of a graph which we call ``neighborhood-minors", and we show that the cut  algebras of such minors of a graph $G$ are algebra retracts of the cut algebra of $G$. As special cases of such minors, we mention, e.g., clique-sums and vertex duplications. We also discuss several well-known classes of graphs whose all/some of induced subgraphs provide such an algebra retract, like chordal graphs, complete $t$-partite graphs, Ferrers graphs and ring graphs.

In Section~\ref{application}, we give some applications of the results from the former sections. First, we introduce the notion of a ``combinatorial retract" of a graph which in particular is also a minor, and is constructed via the edge contractions and neighborhood-minors repeatedly. Therefore, it yields algebra retracts of the cut algebra of the original graph. As a consequence we verify a weaker version of \cite[Conjecture~3.1]{SS}.
In this section, we determine when the cut ideal of a graph $G$
is generated in a single degree.
We also characterize all graphs whose cut algebras are complete intersection. Furthermore, we classify all connected graphs whose cut ideals have linear resolution, and in particular, those ones which have $2$-linear resolution, namely those having regularity $2$. In this section, we also discuss some examples for which we can get nontrivial lower bounds for their regularity by applying our results. We end this section by giving a necessary condition for the cut ideals which satisfy property $N_1$, and pose a problem based on our computational experiments on the converse of the latter statement.

In Section~\ref{Examples}, some examples on some algebraic properties of cut ideals of certain graphs are presented which are essential in the literature like complete graphs, as well as those ones which are used and play important roles throughout this paper.

The authors thank Hidefumi Ohsugi and Bernd Sturmfels for valuable comments and suggestions for this paper.

\section{Preliminaries and Notations}\label{Notations}

In this section we recall some definitions and facts which are used in the rest of the paper. We also fix some notations.

\subsection{Graphs}\label{graphs}

Throughout the paper, all graphs are assumed to be simple with a non-empty set of vertices. If the set of edges of a graph is empty, then it means that the graph is a trivial graph which consists of isolated vertices.

For a graph $G$, we sometimes write $V(G)$ and $E(G)$ to denote the set of vertices and edges, respectively. In the following, we fix the  notations to denote some certain types of graphs which are used throughout this paper. Here, $G=(V,E)$, and $n:=|V|$.

\begin{itemize}
\item $K_n$: Complete graph.
\item $C_n$: $n$-Cycle (cycle of length $n$).
\item $P_n$: Path of length $n-1$.
\item $K_{1,n-1}$: Star graph, which is the complete bipartite graph with partitions of cardinalities $1$ and $n-1$.
\item $G\setminus e$: the graph with the vertex set $V$ and the edge set $E\setminus e$ where $e\in E$.
\item $G=G_1\sqcup \cdots \sqcup G_r$: disjoint union of graphs $G_1,\ldots, G_r$.
\item $rG$: $\bigsqcup_{i=1}^r G$, namely disjoint union of $r$ copies of $G$.
\end{itemize}

Note that, because of symmetry in the complete graph $K_n$, we simply mean by $K_n\setminus e$ the graph which is obtained by deletion of an arbitrary edge of $K_n$.

Let $G=(V,E)$ be a graph and $v\in V$. Then
\[
N_G(v):=\{w\in V: w~\mathrm{is~a~neighbor~of}~v~\mathrm{in}~G\},
\]
where a vertex $w\in V$ is called a \emph{neighbor} of $v$ in $G$, if it is adjacent to $v$. Moreover,
\[
N_G[v]:=N_G(v)\cup \{v\} ~~ \mathrm{and}~~ N_G(T):=\cup_{v\in T} N_{G}(v),
\]
for any non-empty subset $T$ of $V$.
Then, the \emph{degree} of a vertex $v$ of $G$ is defined to be $\deg_{G}(v):=|N_{G}(v)|$.
Let $W$ be a nonempty subset of $V$. Recall that an \emph{induced subgraph} of $G$ on $W$, denoted by $G_W$, is the graph on the vertex set $W$ whose edges are exactly those edges of $G$ whose vertices are in $W$. If the induced subgraph $G_W$ is a complete graph on $W$, then it is called a \emph{clique} of $G$.

\subsection{Cut sets and algebras}\label{cut}

Now, keeping in mind the notations introduced in Section~\ref{introduction}, we fix some further notations. Let $G=(V,E)$ be a graph on $n$ vertices, with a partition $V=A\cup B$ of its vertex set. Then, clearly, $B=A^c:=V\setminus A$ is the complement of $A$ which is just determined by $A$. So
we set
\[
\mathrm{Cut}(A)=\mathrm{Cut}(A^c):=\mathrm{Cut}(A|A^c),
\]
and also for the variables of the polynomial ring $S_G$, we put
\[
q_A:=q_{A|A^c}.
\]
In particular, $q_A$ and $q_{A^c}$ are the same. Moreover, $\phi_G(q_A)$ is by the definition a monomial in $R_G$, which we denote by
\[
u_A:=\phi_G(q_A).
\]
In particular, $u_A=u_{A^c}$.
Then we have $\KK[G]=\KK[u_A: A\subseteq V]$.

Note that there are $2^{n-1}$ distinct partitions for $V$ which bijectively correspond to the variables of the polynomial ring $S_G$. But, the generators of the $\KK$-algebra $\KK[G]$ do not always bijectively correspond to them unless $G$ is connected (see Proposition~\ref{linear forms}).
In addition, in some explicit cases, if $A=\{i_1,\ldots,i_k\}$, then we write  $q_{i_1\ldots i_k}$ and $u_{i_1\ldots i_k}$ instead of $q_{\{i_1,\ldots,i_k\}}$ and $u_{\{i_1,\ldots,i_k\}}$, respectively.

Finally, we would like to point out that the cut ideal $I_G$ is a prime ideal generated by some \emph{pure binomials} in $S_G$, since it is a toric ideal. Here, by a pure binomial in a polynomial ring we mean a binomial of the form $u-v$ where $u$ and $v$ are monomials.

\subsection{Polytopal algebras}\label{polytope}

Let $P\subset \RR^d$ be a polytope, that is $P=\mathrm{conv}(v_1,\ldots,v_r)$,  the convex hull of $v_1,\ldots,v_r\in \RR^d$. The \emph{dimension} of $P$ is the dimension of the \emph{affine hull} $\mathrm{aff}(P)$, which is the smallest affine subspace of $\RR^d$ containing $P$. Moreover, a \emph{morphism} of polytopes $P$ and $Q$ is a map $\varphi\colon P\rightarrow Q$ that can be extended to an affine map $\tilde{\varphi}\colon \mathrm{aff}(P)\rightarrow \mathrm{aff}(Q)$. In particular, the polytopes $P$ and $Q$ are said to be \emph{affinely isomorphic} if the morphism $\varphi$ is bijective.

We also recall that a \emph{polyhedron} is the set of solutions of a linear system of inequalities, and a polytope is equivalently a \emph{bounded polyhedron}.

A subset $F$ of a polytope $P$ is a \emph{face} of $P$ if $F$ is the intersection of $P$ with a hyperplane $H$ (a \emph{supporting hyperplane}), such that $P$ is entirely contained in one of the two half-spaces of $\RR^d$ given by $H$. Observe that each face of a polytope is a polytope itself. Faces of dimension $0$ and $\dim P$ are called \emph{vertices} and \emph{facets}, respectively. Moreover, the empty set and $P$ itself are trivial faces.

A polytope $P$ is called a \emph{lattice polytope} if its vertices are lattice points of the integral lattice $\ZZ^d\subset \RR^d$. The set of lattice points in $P$ are denoted by $L_P$.

Now, let $P\subset \RR^d$ be a lattice polytope. Associated with $P$ is a standard graded $\KK$-algebra $\KK[P]$ (up to graded isomorphism) called \emph{polytopal algebra} whose generators correspond bijectively to the lattice points in $P$. More precisely,
\[
\KK[P]=\KK[{\bf{y^a}}z:{\bf{a}}\in P\cap \ZZ^d]
\]
is a $\KK$-subalgebra of $\KK[y_1,\ldots,y_d,z]$ where $${\bf{y^a}}:=y^{a_1}_1\cdots y^{a_d}_d
\text{ for } {\bf a}=(a_1,\ldots ,a_d).
$$
 Here, standard graded means graded and generated in degree $1$.

\subsection{Algebra retracts}\label{retract}

First we recall the well-known definition of an algebra retract of a graded algebra. If not stated otherwise, by ``graded" we mean the standard $\ZZ$-graded (see Section~\ref{properties and examples}).

\begin{Definition}\label{retract-def}
{\em Let $A$ and $B$ be graded $\KK$-algebras and let
$\iota\colon A\rightarrow B$ be an injective homogeneous $\KK$-algebra homomorphism. Then $A$ is called an \emph{algebra retract} of $B$, if there exists a homogeneous (surjective) homomorphism of $\KK$-algebras
$\pi\colon B\rightarrow A$ such that $\pi \circ \iota=\mathrm{id}_{A}$.
}
\end{Definition}

It follows clearly from the definition that if $A$, $B$ and $C$ are graded $\KK$-algebras where $A$ is an algebra retract of $B$, and $B$ is an algebra of $C$, then $A$ is an algebra retract of $C$. Note that we do not insist that the homogeneous homomorphisms are of degree $0$. Moreover, in this paper, we do not consider more general kinds of algebra retracts, i.e.~not graded ones.

Several algebraic properties, such as being regular, a complete intersection and Koszul, are known to be preserved by algebra retracts (see for example \cite{EN} and \cite{OHH}). Moreover, in \cite[Corollary~2.5]{OHH} it is shown that the graded Betti numbers do not increase by retraction. The precise statement is the following.

\begin{Proposition}\label{Betti}
{\em (}\cite[Corollary~2.5]{OHH}{\em )}
Let $R=A/I$ and $S=B/J$ be graded $\KK$-algebras where $A$ and $B$ are polynomial rings over a field $\KK$. Suppose that $R$ is an algebra retract of $S$, and $I$ and $J$ are graded ideals containing no linear forms. Then:
\begin{enumerate}\label{betti-retract}
\item [{\em (a)}] $\beta_{i,j}^A(I)\leq \beta_{i,j}^B(J)$ for all $i,j$.
\item [{\em (b)}] $\reg_A(I)\leq \reg_B(J)$.
\item [{\em (c)}] $\projdim_A(I)\leq \projdim_B(J)$.
\end{enumerate}
\end{Proposition}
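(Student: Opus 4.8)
The plan is to deduce (b) and (c) formally from (a), and to prove (a) by rephrasing graded Betti numbers as Koszul homology and then transporting the retract maps $\iota\colon R\to S$ and $\pi\colon S\to R$ to the Koszul level. First I would reduce (a) to the assertion that $\beta_{i,j}^A(R)\le\beta_{i,j}^B(S)$ for all $i,j$. Since $I$ and $J$ contain no linear forms, $A$ and $B$ are polynomial rings on \emph{minimal} homogeneous generating sets of $R$ and $S$, so the exact sequence $0\to I\to A\to R\to 0$ gives $\beta_{i,j}^A(I)=\beta_{i+1,j}^A(R)$ (and similarly for $J$ and $S$), while $\beta_{0}^A(R)=\beta_0^B(S)$ is trivial. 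Next I would invoke the standard identification of graded Betti numbers with Koszul homology: if $t_1,\dots,t_m$ is a minimal homogeneous generating set of the irrelevant maximal ideal $\mm_R$, then tensoring the Koszul resolution of $\KK$ over $A$ with $R$ yields $\beta_{i,j}^A(R)=\dim_\KK H_i(K_\bullet(t_1,\dots,t_m;R))_j$, where $K^R:=K_\bullet(t_1,\dots,t_m;R)$ is the Koszul complex on these generators; analogously $\beta_{i,j}^B(S)=\dim_\KK H_i(K^S)_j$ for a minimal generating set $s_1,\dots,s_n$ of $\mm_S$, with $K^S:=K_\bullet(s_1,\dots,s_n;S)$.

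The heart of the argument is a compatible choice of generators. Applying $\pi$ to a linear relation among the $\iota(t_i)$ modulo $\mm_S^2$ shows, using $\pi(\mm_S^2)\subseteq\mm_R^2$ and $\pi\circ\iota=\mathrm{id}_R$, that $\iota(t_1),\dots,\iota(t_m)$ are linearly independent modulo $\mm_S^2$; hence they extend to a minimal generating set $s_1,\dots,s_n$ of $\mm_S$ with $s_i=\iota(t_i)$ for $i\le m$, and one may arrange $\pi(s_i)=t_i$ for $i\le m$ and $\pi(s_j)=:z_j\in\mm_R$ for $j>m$. Base change along $\iota$ and along $\pi$ then produces chain maps $K^R\to K_\bullet(s_1,\dots,s_m;S)\hookrightarrow K^S$ and $K^S\to K_\bullet(t_1,\dots,t_m,z_{m+1},\dots,z_n;R)=:\tilde K^R$. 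Tracing the effect on exterior generators, $dt_i\mapsto ds_i\mapsto dt_i$ and the coefficient map is $R\xrightarrow{\iota}S\xrightarrow{\pi}R=\mathrm{id}_R$; since no $dz_j$ is ever produced, the composite $K^R\to K^S\to\tilde K^R$ is precisely the natural inclusion of $K^R$ into the Koszul complex obtained by appending the redundant elements $z_{m+1},\dots,z_n$.

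It then remains to see that this inclusion is injective on homology. Because each $z_j\in\mm_R=(t_1,\dots,t_m)$ and Koszul homology is annihilated by the ideal generated by its sequence, multiplication by $z_j$ acts as zero on $H_\bullet(K^R)$; the long exact sequence for appending one element therefore splits into short exact sequences $0\to H_i(\underline t;R)\to H_i(\underline t,z_j;R)\to H_{i-1}(\underline t;R)\to 0$, in which the first map is the one induced by the inclusion. Iterating over $z_{m+1},\dots,z_n$ shows that $H_{i,j}(K^R)\hookrightarrow H_{i,j}(\tilde K^R)$ is injective in each internal degree, so the composite $H_{i,j}(K^R)\to H_{i,j}(K^S)\to H_{i,j}(\tilde K^R)$ is injective and thus $\beta_{i,j}^A(R)\le\beta_{i,j}^B(S)$. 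That all maps preserve the internal degree is guaranteed because a homogeneous $\KK$-algebra homomorphism scales degrees by a fixed factor, and $\pi\circ\iota=\mathrm{id}_R$ forces this factor to be $1$ for both $\iota$ and $\pi$. Finally, (b) and (c) follow at once from (a), since $\reg_A(I)=\max\{j-i:\beta_{i,j}^A(I)\ne 0\}$ and $\projdim_A(I)=\max\{i:\beta_{i,j}^A(I)\ne 0 \text{ for some } j\}$, and every bidegree occurring for $I$ also occurs for $J$.

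I expect the main obstacle to be the bookkeeping in the second step: verifying rigorously that the composite of the $\iota$- and $\pi$-induced chain maps is exactly the inclusion appending the $z_j$, and confirming that appending elements of $\mm_R$ can only enlarge Koszul homology in each bidegree. Once this is in place, the passage from the residue field resolutions to the ideal Betti numbers and the derivation of (b) and (c) are purely formal.
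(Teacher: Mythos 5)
The paper offers no proof of this proposition at all: it is quoted from \cite[Corollary~2.5]{OHH}, so there is no in-paper argument to compare against, and your task was really to reconstruct the argument from the cited source. Your reconstruction is correct and self-contained, and it follows what is essentially the standard change-of-rings route: compute $\Tor_i^A(\KK,R)_j$ as Koszul homology on a minimal generating set of $\mm_R$, lift $\iota$ and $\pi$ to compatible choices of degree-one generators, and observe that the composite $H_{i,j}(K^R)\to H_{i,j}(K^S)\to H_{i,j}(\tilde K^R)$ is induced by appending superfluous elements of $\mm_R$ to the Koszul complex, hence is injective. The two points that genuinely require care are both handled correctly: (i) the normalization that $\iota$ and $\pi$ preserve degrees --- the paper explicitly allows homogeneous maps of nonzero degree, but $\pi\circ\iota=\id_R$ forces both scaling factors to be $1$, which is what lets you extend $\iota(t_1),\dots,\iota(t_m)$ to a minimal degree-one generating set of $\mm_S$ after checking linear independence modulo $\mm_S^2$ via $\pi$; and (ii) the lemma that adjoining elements of $(t_1,\dots,t_m)=\mm_R$ to the Koszul complex injects on homology in each internal degree, which you derive correctly from the fact that Koszul homology is annihilated by the ideal of the sequence, so the connecting maps $\pm z_j$ vanish and the long exact sequences split into short ones whose first arrows are the inclusion-induced maps. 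The reduction $\beta_{i,j}^A(I)=\beta_{i+1,j}^A(R)$ (valid including $i=0$ because $I\subseteq\mm_A^2$) and the purely formal deduction of (b) and (c) from (a) are fine. I see no gaps.
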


Here, we recall that for a polynomial ring $R=\KK[x_1,\ldots,x_n]$ and a finitely generated $R$-module $M$, in general, the \emph{graded Betti numbers} are defined as:
\[
\beta_{i,j}^R(M):=\dim_{\KK} \Tor_i^R(\KK,M)_j.
\]
The \emph{Castelnuovo-Mumford regularity} and the \emph{projective dimension} of $M$ are defined as:
\[
\reg_R(M):=\max\{j-i:\beta_{i,j}^R(M)\neq 0\},
\]
\[
\projdim_R(M):=\max\{i:\beta_{i,j}^R(M)\neq 0~\mathrm{for~some~}~j\}.
\]

Note that the properties of being Gorenstein and Cohen-Macaulay are not preserved necessarily by algebra retracts, (see, e.g., \cite[Example~3.9]{EN}).

As an example of well-known algebra retracts one can mention face retracts. Let $P$ be a polytope. Given a face $F\subseteq P$, it follows from \cite[Corollary~4.34]{BG} that there is a natural (standard graded) algebra retract $\KK[F]$ of $\KK[P]$ with the surjective homomorphism
\begin{equation}\label{K[F]}
\pi\colon \KK[P]\rightarrow \KK[F]
\end{equation}
with $\pi({\bf{ y^a}}z)=0$ if ${\bf a}\in L_P\setminus F$. Such an algebra retract is called a \emph{face retract}.

At the end of this section, we refer the reader to, e.g., \cite{BG,CLS,S} for more details about toric algebras and related topics.

\section{Some basic properties of Cut Algebras}\label{properties and examples}

In this section, we provide some fundamental properties of cut ideals and algebras.

First we focus on different types of gradings that one can associate to cut algebras. Let $G=(V,E)$ be a graph with $E=\{e_1,\ldots,e_{m}\}$ where $m\geq 1$, and let $A\subset V$. We are interested in the following gradings of the cut algebra of $G$ which imply that the $\KK$-algebra homomorphism $\phi_G$ is homogeneous, and hence $I_G$ is a graded ideal of $S_G$ with respect to the desired graded rings.

\begin{itemize}
\item \emph{The standard $\ZZ$-grading}: We put $\deg(q_A)=\deg(u_A):=1$. Then $\KK[G]$ is a standard graded $\KK$-algebra.

\item  \emph{The $\ZZ^{2|E|}$-(multi-)grading}: We set $\mdeg(q_A)=\mdeg(u_A):=\varepsilon_A$, where $\varepsilon_A$ is a vector in $\ZZ^{2|E|}$ such that for $i=1,\ldots, m$,
\begin{displaymath}
{(\varepsilon_A)}_i= \left \{\begin {array}{ll} 1 & \mathrm{if}~~~
e_i\in \mathrm{Cut(A)},\\
0 & \mathrm{otherwise},
\end{array}\right.
\end{displaymath}
and
\[
{(\varepsilon_A)}_{m+i}=1-{(\varepsilon_A)}_i.
\]
Here, $\mdeg$ stands for the \emph{multidegree}.

\item  \emph{The $(s,t)$-bi-grading}: We define the \emph{$s$-degree} part as \[
\deg_s(q_A)=\deg_s(u_A):=|\mathrm{Cut(A)}|,
\]
the \emph{$t$-degree} part as
\[
\deg_t(q_A)=\deg_t(u_A):=2|E|-|\mathrm{Cut(A)}|,
\]
and
\[
\deg_{s,t}(q_A):=\bigl(\deg_s(q_A),\ \deg_t(q_A)\bigr).
\]
\end{itemize}

If not stated otherwise, throughout the paper we use the standard $\ZZ$-grading.
Next, we recall dimension and height formulas from the literature which will be used in the sequel. Indeed, for a graph $G=(V,E)$ with $|V|=n$, it is known that
\begin{equation}\label{dim}
\dim S_G/I_G=|E|+1,
\end{equation}
(see, e.g., \cite[Proposition~4.22]{BG} together with the fact that $\mathrm{Cut}^{\square}(G)$ has dimension~$|E|$). It follows that
\begin{equation}\label{height}
\height I_G=2^{n-1}-|E|-1
\end{equation}
and thus
\begin{equation}\label{projdim}
\projdim_{S_G} S_G/I_G\geq 2^{n-1}-|E|-1,
\end{equation}
where the equality holds if and only if $\mathbb{K}[G]$ is Cohen-Macaulay.

One of the main points which has been considered in \cite{SS} is the highest degree of a generator in a minimal system of generators of the cut ideal of a graph. Moreover, there is a characterization of the graphs whose cut ideals are generated in degree~$\leq 2$.

Here, we provide the two following facts which determine when either $I_G=\langle 0 \rangle$ or there are linear forms in the generating set of a cut ideal. The proofs are straightforward, but not discussed in the literature.

\begin{Proposition}\label{regularity0}
Let $G=(V,E)$ be a graph with $|E|\geq 1$. Then the following statements are equivalent:
\begin{enumerate}
\item[{\em (a)}] $I_G=\langle 0 \rangle$, i.e. $\mathbb{K}[G]$ is regular;
\item[{\em (b)}] $G=K_2$ or $G=K_3$.
\end{enumerate}
\end{Proposition}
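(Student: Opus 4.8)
The plan is to reduce statement (a) to a purely numerical identity relating $n=|V|$ and $|E|$ by means of the dimension formula, and then to solve that identity combinatorially.

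First I would recall that $I_G$ is a prime ideal of the polynomial ring $S_G$, and that $S_G$ has exactly $2^{n-1}$ variables, one for each partition $A|B$ of $V$. Since $S_G$ is an integral domain, its only prime ideal of height $0$ is $\langle 0\rangle$; hence $I_G=\langle 0\rangle$ if and only if $\height I_G=0$. By the dimension formula \eqref{dim} we have $\dim S_G/I_G=|E|+1$, so that $\height I_G=2^{n-1}-|E|-1$, exactly as recorded in \eqref{height}. Consequently, statement (a) is equivalent to the numerical identity
\[
2^{n-1}=|E|+1 .
\]

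The remaining task is to determine all simple graphs meeting the condition $|E|=2^{n-1}-1$ together with the constraints $|E|\ge 1$ (from the hypothesis of the proposition) and $|E|\le\binom{n}{2}$ (since $G$ is simple). The bound $|E|\ge 1$ already forces $n\ge 2$. For $n=2$ and $n=3$ the quantity $2^{n-1}-1$ equals $1$ and $3$, that is, exactly $\binom{n}{2}$ in each case; hence $G$ must be the \emph{complete} graph on $n$ vertices, giving $G=K_2$ and $G=K_3$ respectively. For $n\ge 4$ I would rule out any solution by establishing the inequality $2^{n-1}-1>\binom{n}{2}$. This holds for $n=4$ (as $7>6$) and propagates by induction: assuming $2^{n-1}-1>\binom{n}{2}$, we obtain $2^{n}-1>2\binom{n}{2}+1=n(n-1)+1$, and the elementary estimate $n(n-1)+1\ge\binom{n+1}{2}$ reduces to $(n-1)(n-2)\ge 0$, which is clear. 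Thus $n\in\{2,3\}$, and combining this with the case analysis above yields precisely $G=K_2$ or $G=K_3$.

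I do not expect a serious obstacle here: the two points requiring a little care are the passage from $I_G=\langle 0\rangle$ to the height condition (which rests on $I_G$ being prime and $S_G$ a domain, so that \eqref{height} may be applied as an equality) and the comparison of the exponential quantity $2^{n-1}-1$ with the polynomial bound $\binom{n}{2}$. The equivalence with regularity noted in (a) is then immediate, since $I_G=\langle 0\rangle$ says exactly that $\KK[G]=S_G$ is a polynomial ring.
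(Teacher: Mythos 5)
Your proposal is correct and follows essentially the same route as the paper: both reduce (a) to the condition $\height I_G=2^{n-1}-|E|-1=0$ via the dimension formula and then show $2^{n-1}>\binom{n}{2}+1$ for $n\geq 4$, forcing $n\in\{2,3\}$ and hence $G=K_2$ or $K_3$. The only (immaterial) difference is that you verify the inequality by induction on $n$, whereas the paper counts partitions of $V$ against $2$-element subsets directly.
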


\begin{proof}
Suppose that $|V|=n$. To verify the statement, it is enough to show that $\height I_G=0$ if and only if $G=K_2$ or $K_3$.

If $n=2$, then $G=K_2$ for which $\height I_G=0$ by~(\ref{height}).

If $n=3$, then $G$ can be $K_3$ or $P_3$ or $K_2\sqcup K_1$. In this case, by~(\ref{height}), $\height I_G=0$ if and only if $G=K_3$.

For $n\geq 4$, we show that $2^{n-1} > \binom{n}{2}+1$, which implies that $2^{n-1}>|E|+1$, since $\binom{n}{2}$ is the number of edges of the complete graph $K_n$. Then it follows that $\height I_G\neq 0$, and we are done. For $n=4$, the desired inequality clearly holds. Now, we assume $n\geq 5$. Note that $\binom{n}{2}$ is the number of subsets of $V$ of cardinality~$2$, and $2^{n-1}$ is the number of all partitions of $V$. Since $n\geq 5$, there is no partition of $V$ into two subsets of cardinalities~$2$. So, each subset of cardinality~$2$ of $V$ determines exactly one partition for $V$. Besides these partitions there are several other partitions for $V$, and hence the desired inequality holds.
\end{proof}

After knowing the simple structure of the cut algebras associated to $K_2$ and $K_3$ in Proposition~\ref{regularity0}, it worths to understand better the cut algebras of other complete graphs. The cases of the next two small complete graphs, namely $K_4$ and $K_5$, we investigate some of their properties in Section~\ref{Examples} (see Examples~\ref{K_4}~and~\ref{K_5}, and Problem~\ref{problemK_n}). These cut algebras are of special interest, because there are interesting known results and conjectures on cut ideals in which these two complete graphs play prominent roles, (see, e.g., \cite{E} and \cite{SS}).

In the following, we observe that the only case where linear forms belong to a generating set of a cut ideal is when the graph is disconnected.

\begin{Proposition}\label{linear forms}
Let $G=(V,E)$ be a graph with $|E|\geq 1$. Then the following statements are equivalent:
\begin{enumerate}
\item[{\em (a)}] ${(I_G)}_1\neq 0$;
\item[{\em (b)}] $G$ is disconnected.
\end{enumerate}
\end{Proposition}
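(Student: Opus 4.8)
The plan is to prove the equivalence by understanding precisely when two distinct partition variables $q_A$ and $q_{A'}$ map to the same monomial under $\phi_G$, since a linear form in $I_G$ must be of the shape $q_A - q_{A'}$ (the ideal is generated by pure binomials, and a degree-one binomial is a difference of two of the standard generators). Thus $(I_G)_1 \neq 0$ if and only if there exist partitions $A|A^c \neq A'|(A')^c$ with $u_A = u_{A'}$, which by the definition of $\phi_G$ happens exactly when $\mathrm{Cut}(A) = \mathrm{Cut}(A')$ as subsets of $E$ (the monomial $u_A$ records, edge by edge, whether that edge is cut or not, so equal monomials force equal cut sets). So the whole statement reduces to: $G$ is disconnected if and only if two genuinely different partitions of $V$ induce the same cut set.

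For the direction (b) $\Rightarrow$ (a), suppose $G$ is disconnected, and let $V = V_1 \sqcup V_2$ be a separation into two nonempty parts with no edges between them. Take $A = V_1$ and $A' = \emptyset$. These give distinct partitions (as unordered partitions, $V_1 | V_2$ is not the same as $\emptyset | V$ because $V_1 \neq \emptyset$ and $V_1 \neq V$, using that $V_2 \neq \emptyset$). Both have empty cut set: $\mathrm{Cut}(\emptyset) = \emptyset$ trivially, and $\mathrm{Cut}(V_1) = \emptyset$ precisely because there are no edges joining $V_1$ to $V_2$. Hence $u_{V_1} = u_\emptyset$ and $q_{V_1} - q_\emptyset$ is a nonzero linear form in $I_G$.

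For the direction (a) $\Rightarrow$ (b), I argue the contrapositive: if $G$ is connected, then distinct partitions yield distinct cut sets, so $\phi_G$ is injective on the generators and no linear binomial survives in $I_G$. The key combinatorial fact is that for a connected graph the map sending an unordered partition $A | A^c$ to its cut set $\mathrm{Cut}(A) \subseteq E$ is injective. To see this, suppose $\mathrm{Cut}(A) = \mathrm{Cut}(A')$. Fix any vertex $v$ and, using connectedness, note that for any other vertex $w$ a path from $v$ to $w$ determines whether $w$ lies on the same side as $v$ in the partition by counting the parity of how many path-edges are cut; since the cut sets coincide, this parity data agrees for $A$ and $A'$, forcing the two partitions to coincide up to the global swap $A \leftrightarrow A^c$, i.e. to be equal as unordered partitions. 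Therefore all $u_A$ are distinct and $(I_G)_1 = 0$.

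The main obstacle is making the parity/path argument in the last paragraph airtight, in particular handling the unordered nature of the partition correctly (one recovers the partition only up to complementation, which is exactly the right amount of information since $q_A = q_{A^c}$). A clean way to phrase it is: connectedness guarantees that the cut set $\mathrm{Cut}(A)$ together with a choice of basepoint side determines $A$, because walking along edges of a spanning tree and toggling sides at each cut edge reconstructs $A$ uniquely. Conversely the disconnected case produces a genuine coincidence precisely because the relative side of the two components is undetermined by the (empty) set of crossing edges. This dichotomy is the heart of the proof, and once the injectivity claim is established the equivalence follows immediately from the reduction in the first paragraph.
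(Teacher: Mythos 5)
Your proof is correct and follows essentially the same route as the paper: the key point in both is that $(I_G)_1\neq 0$ forces two distinct partitions with equal cut sets (read off from the monomials $u_A$, or equivalently the bi-grading), which is impossible for connected $G$ since a cut set then determines the unordered partition. The only difference is that for (b)$\Rightarrow$(a) the paper cites \cite[Proposition~5.2]{NP} while you give the direct two-line construction $q_{V_1}-q_\emptyset$, and you spell out the parity-along-paths argument that the paper leaves as an observation.
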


\begin{proof}
(b) $\Rightarrow$ (a) follows from \cite[Proposition~5.2]{NP} (see also Proposition~\ref{disconnected}).

Now, we prove (a) $\implies$ (b). Suppose there is a generator of degree $1$ in $I_G$, namely $q_{A}-q_{C}$ for two different partitions given by subsets $A$ and $C$ of $V$. Therefore, ${\phi}_G(q_{A}-q_{C})=0$, and hence $u_{A}=u_{C}$. Using the bi-grading of $\KK[G]$, it follows that $\mathrm{Cut}(A)=\mathrm{Cut}(C)$. If $G$ is connected, then one can observe that any cut set of edges is given by a unique partition of $V$. Therefore, since the partitions given by $A$ and $C$ are different, it follows that $G$ is a disconnected graph.
\end{proof}

We end this section by a discussion of the projective dimension of cut algebras. By~(\ref{projdim}), it is reasonable to know the graphs whose cut algebras have small projective dimension. In the following, we discuss this question. Indeed, in Proposition~\ref{regularity0} the graphs whose cut algebra has projective dimension $0$ have been determined.

\begin{Proposition}\label{projective dimension}
Let $G=(V,E)$ be a graph with $|V|=n$ and $|E|\geq 1$, and let $p:=\projdim_{S_G} S_G/I_G$. Then we have:
\begin{enumerate}
\item[{\em (a)}] If $n\leq 5$ and $G\neq K_5$, then
$p=2^{n-1}-|E|-1$, and hence $p\leq 14$.
\item[{\em (b)}] If $G=K_5$, then $p=15$.
\item[{\em (c)}] If $n=6$, then $p\geq 16$.
\item[{\em (d)}] If $n\geq 7$, then $p\geq 42$.
\end{enumerate}
\end{Proposition}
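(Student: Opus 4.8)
The plan is to split the four parts according to whether they follow from the height bound alone or genuinely require the Cohen--Macaulay property.

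I would first dispose of parts (c) and (d), which I expect to be purely combinatorial consequences of (\ref{height}) and (\ref{projdim}). Since every graph on $n$ vertices satisfies $|E|\le \binom{n}{2}$, combining these gives
\[
p\ \ge\ \height I_G\ =\ 2^{n-1}-|E|-1\ \ge\ 2^{n-1}-\binom{n}{2}-1.
\]
For $n=6$ the right-hand side equals $32-15-1=16$, which yields (c). For (d) I would observe that $f(n):=2^{n-1}-\binom{n}{2}-1$ is increasing for $n\ge 3$, because $f(n+1)-f(n)=2^{n-1}-n>0$; hence $f(n)\ge f(7)=64-21-1=42$ for all $n\ge 7$, which is (d).

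For part (a) the inequality $p\le 14$ is again immediate once the equality $p=2^{n-1}-|E|-1$ is established: with $|E|\ge 1$ and $n\le 5$ one gets $p\le 2^{4}-1-1=14$. The real content is the equality itself, which by the remark following (\ref{projdim}) is equivalent to $\KK[G]$ being Cohen--Macaulay. I would argue that any graph with $n\le 5$ and $G\ne K_5$ is $K_5$-minor-free --- with only five vertices, deletions and contractions can never create a $K_5$-minor unless the graph already equals $K_5$. Since a normal affine semigroup ring is Cohen--Macaulay by Hochster's theorem, it then suffices to know normality of these cut algebras. This is exactly the range in which the normality part of \cite[Conjecture~3.7]{SS} is settled: by \cite{O1} it reduces to $4$-connected plane triangulations, the smallest of which (the octahedron) has six vertices, so normality holds for every $K_5$-minor-free graph on at most five vertices; alternatively one can verify normality directly for the finitely many such graphs.

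Finally, for (b) the value $p=15$ lies outside the Cohen--Macaulay range (here $2^{n-1}-|E|-1=5$), so I would obtain it from the explicit minimal free resolution of $S_{K_5}/I_{K_5}$ recorded in Example~\ref{K_5}; this simultaneously shows that $\KK[K_5]$ is not Cohen--Macaulay, as expected since $K_5$ trivially has a $K_5$-minor. The main obstacle is concentrated entirely in part (a): securing the Cohen--Macaulay (equivalently, normality) property, since the general conjecture is still open and one must check carefully that the small-vertex range genuinely falls among the already-settled cases rather than invoking the full conjecture.
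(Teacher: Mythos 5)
Your proposal is correct and follows essentially the same route as the paper: parts (c) and (d) from $\height I_G=2^{n-1}-|E|-1$ together with $|E|\le\binom{n}{2}$ (the paper checks $2^{n-1}-\binom{n}{2}-1\ge 42$ by counting partitions rather than by your monotonicity of $f(n)$, a purely cosmetic difference), part (a) from normality of $\KK[G]$ for $n\le 5$, $G\ne K_5$ --- which the paper cites directly from Ohsugi's Example~3.7, i.e.\ your ``verify directly for the finitely many graphs'' fallback rather than the shakier detour through the reduction to $4$-connected plane triangulations --- combined with Hochster's theorem, and part (b) from Example~\ref{K_5}. The only small inaccuracy is that Example~\ref{K_5} does not record a minimal free resolution; it shows $\depth\KK[K_5]=1$ and then obtains $p=15$ from the Auslander--Buchsbaum formula.
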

\begin{proof}
(a) Suppose $n\leq 5$ and $G\neq K_5$. Then, by \cite[Example~3.7]{O1}, we have that $\mathbb{K}[G]$ is normal. This, together with a theorem of Hochster in \cite{Ho} (see also, e.g., \cite[Theorem~6.10]{BG}) implies that $\mathbb{K}[G]$ is Cohen-Macaulay, and hence in (\ref{projdim}) the equality occurs. So $p=2^{n-1}-|E|-1$. This obviously implies that $p\leq 14$.

(b) This part follows from Example~\ref{K_5}.

(c) Suppose that $n=6$. Then by (\ref{projdim}) we get $p\geq 2^{5}-\binom{6}{2}-1=16$, since $|E|\leq \binom{6}{2}$.

(d) Suppose $n\geq 7$. Then, similar to the previous part, by (\ref{projdim}) we get $p\geq 2^{n-1}-\binom{n}{2}-1$, since $|E|\leq \binom{n}{2}$. Note that, since $n\geq 7$, each subset of cardinality~$2$ of $V$ determines exactly one partition for $V$. Therefore, $2^{n-1}-\binom{n}{2}$ is exactly the number of those partitions
$V=A\cup B$ where $|A|,|B|\neq 2$. If $|A|=0$ or $1$ or $3$, then $|B|\neq |A|$ and $|B|\neq 2$, since $n\geq 7$. Thus, there are at least $1+7+\binom{7}{3}=43$ partitions with the latter property, and hence $p\geq 2^{n-1}-\binom{n}{2}-1\geq 42$.
\end{proof}

\begin{Remark}
{\em  We would like to remark that by Proposition~\ref{projective dimension}, we see that for a given $k$ with $k=0,\ldots,15$, there is at least one graph whose cut algebra is of projective dimension~$k$. Those which have no isolated vertices are presented in Table~\ref{table:1}. Furthermore, it follows from Proposition~\ref{projective dimension} together with (\ref{projdim}) that the only graph whose cut algebra could have projective dimension~$16$ is $K_6$. But, based on computations in \cite[Table~1]{SS}, the cut algebra $\mathbb{K}[K_6]$ is not Cohen-Macaulay. So that projective dimension is bigger than $16$ in this case, which implies that projective dimension $16$ can not occur among the cut algebras. }
\end{Remark}

\section{Face retracts of cut polytopes}\label{contraction}

In this section we recall some properties of cut polytopes of graphs and afterwards discuss certain face retracts of cut algebras arising from those polytopes.

First we recall the definition of a cut polytope. For more information about cut polytopes, we refer the reader, e.g., to \cite{DL2}.

\begin{Definition}\label{cut polytope}
{\em Let $G=(V,E)$ be a graph. Then the \emph{cut polytope} ${\mathrm{Cut}}^{\square}(G)$ of $G$ is the convex hull of the \emph{cut vectors} $\delta_{A}\in \mathbb{R}^{|E|}$ of $G$, which are defined as
\begin{displaymath}
\delta_{A}(\{i,j\})= \left \{\begin {array}{ll}
1&\mathrm{if}~~~|A\cap \{i,j\}|=1,\\
0&\mathrm{otherwise},
\end{array}\right.
\end{displaymath}
for any $A\subseteq V$ and $\{i,j\}\in E$. }
\end{Definition}

Note that there is a natural bijection between the cut vectors and the cut sets of $G$. Moreover, observe that the set of vertices of the polytope  ${\mathrm{Cut}}^{\square}(G)$ coincides with the set of all cut vectors of $G$, since cut polytopes are $\{0,1\}$-polytopes. So, if $G$ is a connected graph, then by Proposition~\ref{linear forms} we get the well-known fact that ${\mathrm{Cut}}^{\square}(G)$ has exactly $2^{n-1}$ vertices.

In general, we have that
\[
|E|+1\leq \mathrm{number~of~vertices~of}~{\mathrm{Cut}}^{\square}(G) \leq 2^{n-1},
\]
where the first inequality follows from the fact that ${\mathrm{Cut}}^{\square}(G)$ is full-dimensional.

Next we observe how the polytopal algebra associated to ${\mathrm{Cut}}^{\square}(G)$ is related to the cut algebra of the underlying graph.

\begin{Lemma}\label{cut polytopal algebra}
Let $G$ be a graph. Then there exists a natural isomorphism
\[
\KK[G]\iso \KK[{\mathrm{Cut}}^{\square}(G)]
\]
as standard graded $\KK$-algebras.
\end{Lemma}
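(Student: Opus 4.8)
The plan is to exhibit an explicit graded $\KK$-algebra isomorphism by comparing generators on both sides and checking that the defining monomial relations match up. First I would unwind the two definitions. On one side, $\KK[G]=\KK[u_A:A\subseteq V]$ where $u_A=\phi_G(q_A)=\prod_{e\in\mathrm{Cut}(A)}s_e\prod_{e\notin\mathrm{Cut}(A)}t_e$ is a monomial in $R_G$, and $\KK[G]$ is standard graded with $\deg u_A=1$. On the other side, $\KK[{\mathrm{Cut}}^{\square}(G)]=\KK[\mathbf{y}^{\mathbf a}z:\mathbf a\in {\mathrm{Cut}}^{\square}(G)\cap\ZZ^{|E|}]$, and since ${\mathrm{Cut}}^{\square}(G)$ is a $\{0,1\}$-polytope whose lattice points are exactly its vertices, namely the cut vectors $\delta_A$, the generators are precisely $\mathbf{y}^{\delta_A}z$ for $A\subseteq V$, each of degree $1$.

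Next I would define the map on generators by
\[
\Psi\colon \KK[{\mathrm{Cut}}^{\square}(G)]\To \KK[G],\qquad \mathbf{y}^{\delta_A}z\mapsto u_A,
\]
and verify it is a well-defined graded $\KK$-algebra homomorphism that is surjective. Surjectivity is immediate since every generator $u_A$ is hit, and homogeneity is clear as generators go to degree-one elements. The crux is injectivity, which I would reduce to matching the binomial relations. The key bookkeeping step is the identification of exponent data: under the $\ZZ^{2|E|}$-multigrading introduced in Section~\ref{properties and examples}, the multidegree $\varepsilon_A$ records in its first $|E|$ coordinates exactly the indicator of $\mathrm{Cut}(A)$, which is precisely the cut vector $\delta_A$, while the last $|E|$ coordinates are the complementary entries. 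Thus the exponent vector of the monomial $u_A$ in $R_G$ and the exponent vector $(\delta_A,\ldots)$ attached to $\mathbf{y}^{\delta_A}z$ carry the same information, so that a product $\prod_A u_A^{c_A}$ equals $\prod_A u_A^{d_A}$ in $R_G$ if and only if $\sum_A c_A\delta_A=\sum_A d_A\delta_A$ together with $\sum_A c_A=\sum_A d_A$ (the latter coming from the standard degree / the $z$-variable). This is exactly the condition characterizing the toric relations of the polytopal algebra on the lattice points $\delta_A$.

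From here I would conclude: the toric ideal defining $\KK[G]$ as a quotient of $S_G$ and the toric ideal defining $\KK[{\mathrm{Cut}}^{\square}(G)]$ as a quotient of the polynomial ring on the generators $\mathbf{y}^{\delta_A}z$ are generated by the same binomials under the correspondence $q_A\leftrightarrow \mathbf{y}^{\delta_A}z$, because both are the kernels of the monomial maps determined by the identical lattice-point configuration $\{(\delta_A,1)\}_{A\subseteq V}$ (the extra coordinate $1$ encoding the standard grading). Hence $\Psi$ is an isomorphism of standard graded $\KK$-algebras. The main obstacle I anticipate is purely the identification in the previous paragraph, namely arguing cleanly that the exponent configuration of the $u_A$ and the point configuration $\{\delta_A\}$ (homogenized) define the \emph{same} lattice point set up to affine isomorphism, so that the two toric ideals literally coincide; the remark already recorded in the excerpt that $\KK[G]\iso S_G/I_G$ with $I_G$ a toric ideal, combined with the $(s,t)$-bi-grading, makes this essentially a matter of transporting the $t$-coordinates, since $\deg_t$ is determined by $\deg_s$ and the constant $2|E|$, so no information is lost by projecting onto the $s$-coordinates $\delta_A$.
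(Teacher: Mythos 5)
Your proposal is correct and in substance the same as the paper's proof: both hinge on the single observation that the $t$-exponents of $u_A$ are complementary to the $s$-exponents (equivalently, $\deg_t$ is determined by $\deg_s$ and $|E|$), so that passing between the configuration $\{(\delta_A,\mathbf{1}-\delta_A)\}$ and the homogenized configuration $\{(\delta_A,1)\}$ loses no information. The paper simply writes the map in the opposite direction, $u_A\mapsto z\prod_{\{i,j\}\in\mathrm{Cut}(A)}s_{ij}$, and deduces injectivity directly from that same complementarity, rather than phrasing it as an equality of toric ideals.
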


\begin{proof}
We define:
\begin{eqnarray*}
\varphi\colon \KK[G]&\rightarrow& \KK[{\mathrm{Cut}}^{\square}(G)]
\\
\\
u_A=\prod_{\{i,j\}\in \mathrm{Cut}(A)}s_{ij}\prod_{\{i,j\}\in E\setminus \mathrm{Cut}(A)}t_{ij} &\mapsto& z\prod_{\{i,j\}\in \mathrm{Cut}(A)}s_{ij},
\end{eqnarray*}
for any $A\subseteq V(G)$.

Then, it is easily seen that $\varphi$ is well-defined and a homogeneous surjective homomorphism. Injectivity of $\varphi$ follows from the fact that for each $A\subseteq V(G)$, variables $t_{ij}$ appearing in $u_A$ as factors are uniquely determined by $A$.
\end{proof}

An operation from graph theory which has played an important role in the study of cut polytopes as well as cut algebras is taking \emph{minors}. To recall the definition of a minor of a graph, we need to recall two other operations, namely ``edge deletion" and ``edge contraction".

First recall that a graph $H$ is said to be obtained by an \emph{edge deletion} from a graph $G$, if $H=G\setminus e$ for some $e\in E(G)$.

Next we recall the definition of the edge contraction operation. Let $G=(V,E)$ be a graph, and let $e=\{u,v\}\in E$. A graph $G'=(V',E')$ is said to be obtained by an \emph{edge contraction} from $G$ if
\[
V':=(V\setminus \{u,v\})\cup \{w\},
\]
and
\[
E':=\big{\{}e\in E:e\cap N_G(\{u,v\})=\emptyset\big{\}}\cup \big{\{}\{w,z\}:z\in V'\cap N_G(\{u,v\})\big{\}}.
\]
This means that the edge contraction operation (relative to an edge $e$) is constructed as follows. The edge $e$ is removed from $G$ and its two vertices, $u$ and $v$, are merged into a new vertex $w$, where the edges incident to $w$ in the graph $G'$ each corresponds to an edge incident to either $u$ or $v$.

Note that in some context, it is also allowed to get multiple edges after the contraction of an edge, but here we always consider the simple graph (without any multiple edges) obtained by this operation.

For example, by contracting any edge of the cycle $C_n$, we obtain the cycle $C_{n-1}$. Also, by contracting any edge of the complete graph $K_n$, one obtains the complete graph $K_{n-1}$.

Finally, we recall the definition of a minor of a graph. Let $G$ be a graph. Then a \emph{minor} of $G$ is a graph obtained from $G$ by applying a sequence of the operations edge deletion and edge contraction.

Moreover, given another graph $H$, then $G$ is called \emph{$H$-minor-free} if it does not have any minor isomorphic to $H$.

\begin{Remark}\label{contraction K_n}
{\em Observe that if a graph has a complete graph as a minor, then this minor can be obtained just by a sequence of edge contractions (see also \cite[page~699]{SS}). }
\end{Remark}

\begin{Remark}
{\em  It was already mentioned in \cite[page~699]{SS} that ${\mathrm{Cut}}^{\square}(G\setminus e)$, for some $e\in E(G)$, is not corresponding to a face of ${\mathrm{Cut}}^{\square}(G)$, so that one does not expect to get a face retract by edge deletions. But, even more generally, $\KK[G\setminus e]$ is not necessarily an algebra retract of $\KK[G]$. For example, let $G=C_4$, and let $e$ be any of its edges. Then $G\setminus e =P_4$. But, Proposition~\ref{projective dimension}~(a) shows that
\[
\projdim_{S_{C_4}}(I_{C_4})=3<\projdim_{S_{P_4}}(I_{P_4})=4,
\]
which implies by Proposition~\ref{betti-retract} that $\KK[P_4]$ is not an algebra retract of $\KK[C_4]$. }
\end{Remark}

The above remark also implies that graph minors do not provide algebra retracts in general, but still in some cases one gets retracts. One of these cases is when the minors are just obtained by edge contractions repeatedly. Indeed, the statement in \cite[Lemma~3.2~(2)]{SS} says that if $G'$ is a graph obtained by an edge contraction from the graph $G$, then ${\mathrm{Cut}}^{\square}(G')$ is a ``face" of ${\mathrm{Cut}}^{\square}(G)$. More precisely, the next proposition holds. Since there is a problem with some parts of the proof of \cite[Lemma~3.2]{SS} (see Example~\ref{Ohsugi}) and for the convenience of the reader we provide here a precise proof of the mentioned fact. In the next section, we deal with another case which gives an algebra retract.

\begin{Proposition}\label{face-contraction}
Let $G$ be a graph, and assume that a graph $G'$ is obtained from $G$ by an edge contraction. Then ${\mathrm{Cut}}^{\square}(G')$ is affinely isomorphic to a face of ${\mathrm{Cut}}^{\square}(G)$.
\end{Proposition}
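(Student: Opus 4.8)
The plan is to exhibit $\mathrm{Cut}^{\square}(G')$ as a concrete face of $\mathrm{Cut}^{\square}(G)$, namely the one cut out by forcing the coordinate of the contracted edge to vanish. Write $e=\{u,v\}$ for the contracted edge and $w$ for the merged vertex. Since every cut vector satisfies $0\le \delta_A(e)\le 1$ and $\delta_{\emptyset}(e)=0$, the functional $x\mapsto x_e$ attains its minimum $0$ on $\mathrm{Cut}^{\square}(G)$, so $H=\{x\in\RR^{|E|}:x_e=0\}$ is a supporting hyperplane and
\[
F:=\mathrm{Cut}^{\square}(G)\cap H
\]
is a (nonempty) face. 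Because the vertices of $\mathrm{Cut}^{\square}(G)$ are exactly the cut vectors $\delta_A$, the vertices of $F$ are precisely those $\delta_A$ with $\delta_A(e)=0$, i.e.\ those coming from partitions $A|A^c$ in which $u$ and $v$ lie on the same side.

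Next I would set up the combinatorial correspondence. Subsets $A'\subseteq V'$ are in bijection with subsets $A\subseteq V$ that contain either both or neither of $u,v$, via $w\in A'\iff u,v\in A$ and $z\in A'\iff z\in A$ for $z\neq w$. The decisive observation is that for such an $A$ one has $u\in A\iff v\in A$, hence for every common neighbor $z$ of $u$ and $v$
\[
\delta_A(\{u,z\})=\delta_A(\{v,z\}),
\]
since both equal $1$ exactly when precisely one of $u\in A$, $z\in A$ holds. This is what makes the edge-merging in the contraction compatible with the cut coordinates, and it is the only real subtlety in the argument.

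Using this, I would define a coordinate projection $\psi\colon\RR^{|E|}\to\RR^{|E'|}$ as follows: an edge of $E'$ not incident to $w$ is an edge $f$ of $E$ disjoint from $\{u,v\}$, and $\psi$ reads off the $f$-coordinate; an edge $\{w,z\}$ of $E'$ arises from $\{u,z\}$ and/or $\{v,z\}$ in $E$, and $\psi$ reads off the coordinate of one such chosen representative (the displayed identity shows the value is independent of the choice). A direct check then gives $\psi(\delta_A)=\delta_{A'}$ for every $A$ with $u,v$ on the same side. Conversely I would define the linear ``expansion'' $\rho\colon\RR^{|E'|}\to\RR^{|E|}$ sending the $f$-coordinate to the $f$-coordinate, both the $\{u,z\}$- and $\{v,z\}$-coordinates to the $\{w,z\}$-coordinate, and the $e$-coordinate to $0$; using the identity one checks $\rho(\delta_{A'})=\delta_A$.

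Finally I would assemble these into the affine isomorphism. Both $\psi$ and $\rho$ are linear, $\psi\circ\rho=\mathrm{id}_{\RR^{|E'|}}$ holds directly from the definitions, and $\rho\circ\psi$ fixes every vertex $\delta_A$ of $F$, hence fixes all of $F$ because those vertices affinely span $\mathrm{aff}(F)$. Therefore $\psi|_F$ is a morphism of polytopes with affine inverse $\rho|_{\mathrm{Cut}^{\square}(G')}$, and since $\psi(F)=\mathrm{conv}\{\delta_{A'}\}=\mathrm{Cut}^{\square}(G')$ (as $A'$ ranges over all subsets of $V'$), it is a bijective morphism. Thus $F$ is affinely isomorphic to $\mathrm{Cut}^{\square}(G')$, as required. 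I expect the main obstacle to be precisely the bookkeeping around merged edges, which disappears once the identity $\delta_A(\{u,z\})=\delta_A(\{v,z\})$ on $F$ is isolated; everything else is routine linear algebra.
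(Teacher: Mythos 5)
Your proof is correct and follows essentially the same route as the paper: both cut out the face $F$ with the supporting hyperplane $x_e=0$ and identify it with $\mathrm{Cut}^{\square}(G')$ via the expansion map that duplicates the common-neighbor coordinates and zeroes the $e$-coordinate (your $\rho$ is exactly the paper's $\tilde{\varphi}$). The only cosmetic difference is that you exhibit the coordinate projection $\psi$ as an explicit two-sided inverse, whereas the paper checks injectivity and surjectivity of the expansion directly.
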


\begin{proof}
Let $G=(V,E)$ with $|V|=n$, and $e=\{u,v\}\in E$. Assume that $G'=(V',E')$ is the graph obtained by contracting the edge $e$, and merging the vertices $u,v$ into $w$. Also suppose that $N_G(u)\cap N_G(v)=\{v_1,\ldots,v_p\}$; here $p$ can be also $0$, which means that $u$ and $v$ have no common neighbors in $G$.

Since the coordinates $x_i$ of a vector $x$ in $\RR^{|E'|}$ and $\RR^{|E|}$ are corresponding to the edges of $G'$ and $G$, respectively, we can without loss of generality assume that the coordinates are organized as follows:

For $\RR^{|E'|}$, we associate
\begin{itemize}
\item the coordinates $x_1,\ldots, x_p$ to the edges $\{w,v_i\}$ for $i=1,\ldots,p$,
\item the coordinates $x_{p+1},\ldots, x_{|E'|}$ to the edges which do not contain $w$ (arbitrarily ordered).
\end{itemize}

For $\RR^{|E|}$, we associate
\begin{itemize}
\item the coordinates $x_1,\ldots, x_p$ to the edges $\{u,v_i\}$ for $i=1,\ldots,p$,
\item the coordinates $x_{p+1},\ldots, x_{|E'|}$ to the edges which do not contain $w$ (ordered in the same way as the``corresponding" coordinates in $\RR^{|E'|}$),
\item the coordinate $x_{|E'|+1}$ to the edge $e$,
\item the coordinates $x_{|E'|+2},\ldots, x_{|E|}$ to the edges $\{v,v_i\}$ with for $i=1,\ldots,p$.
\end{itemize}

Now, let $H$ be the hyperplane in $\RR^{|E|}$ given by $x_{e}=0$, and let
$F:={\mathrm{Cut}}^{\square}(G)\cap H$. Then it follows that $F$ is a face of ${\mathrm{Cut}}^{\square}(G)$, as ${\mathrm{Cut}}^{\square}(G)$ is contained in one of the half-spaces defined by $H$.

We show that ${\mathrm{Cut}}^{\square}(G')$ is affinely isomorphic to $F$. For simplicity, let $Q:={\mathrm{Cut}}^{\square}(G')\subseteq \RR^{|E'|}$.

Suppose that $\alpha_0,\alpha_1,\ldots,\alpha_r\in \RR^{|E'|}$ are the vertices of $Q$. Here $|E'|+1\leq r\leq 2^{n-2}$ where $|V'|=n-1$. In addition, assume that $\alpha_0$ is the zero vector (corresponding to $\mathrm{Cut}({\emptyset}$)), and $\alpha_1,\ldots,\alpha_{|E'|}$ provide a basis for $\mathrm{aff}(Q)$ as a vector space. Here, $\mathrm{aff}(Q)=\mathrm{span}(Q)$, since zero is an element of $\mathrm{aff}(Q)$.

For simplicity, for any
\[
\beta=(\beta_1,\ldots,\beta_p,\beta_{p+1},\ldots,\beta_{|E'|})\in \RR^{|E'|},
\]
we write
\[
\beta^{(p)}:=(\beta_{1},\ldots,\beta_{p}).
\]
In particular, for each
\[
\alpha_i=(\alpha_{i1},\ldots,\alpha_{ip},\alpha_{ip+1},\ldots,\alpha_{i|E'|}),
\]
where $i=0,\ldots,r$, we set
\[
\alpha_i=(\alpha^{(p)}_i,\alpha_{ip+1},\ldots,\alpha_{i|E'|}).
\]
Now, we define the following linear map:
\begin{eqnarray*}
\tilde{\varphi}\colon \mathrm{aff}(Q)\iso \RR^{|E'|}&\rightarrow & \mathrm{aff}(F)\subseteq \RR^{|E|}\\
\alpha_i &\mapsto&(\alpha_i,0,\alpha^{(p)}_i), \quad  \mathrm{for}~i=1,\ldots,|E'|.
\end{eqnarray*}

Assume that $\alpha_i=\sum_{j=1}^{|E'|}c_j\alpha_j$, for $i=|E'|+1,\ldots,r$, where $c_j\in \RR$ for all $j$. Then it follows that
\[
\tilde{\varphi}(\alpha_i)=\sum_{j=1}^{|E'|}c_j\tilde{\varphi}(\alpha_j)=
\sum_{j=1}^{|E'|}c_j(\alpha_j,0,\alpha^{(p)}_j)=(\sum_{j=1}^{|E'|}c_j\alpha_j,0,\sum_{j=1}^{|E'|}c_j\alpha^{(p)}_j),
\]
which implies that the map
\begin{eqnarray*}
\varphi\colon Q &\rightarrow & F
\\
\beta &\mapsto & (\beta,0,\beta^{(p)}),
\end{eqnarray*}
for all $\beta\in Q$,
can be extended to $\tilde{\varphi}$. So, $\varphi$ is a morphism between $Q$ and $F$. Trivially $\varphi$ is injective.

Now it remains to see that $\varphi$ is surjective. First, note that
\[
F=\mathrm{conv}(\gamma\in {\mathrm{Cut}}^{\square}(G)\subseteq \RR^{|E|}:{\gamma}_{|E'|+1}=0).
\]
Thus, it is enough to see that for any vertex $\gamma$ of  ${\mathrm{Cut}}^{\square}(G)$ with ${\gamma}_{|E'|+1}=0$, there exists an element $\beta\in Q$ such that $\varphi(\beta)=\gamma$. Since such a vertex is indeed a cut vector of $G$, it follows that the corresponding partition $A|A^c$ of $V$ has the property that $u$ and $v$ both belong either to $A$ or to $A^c$. Without loss of generality assume that $u,v\in A$.

Note that for each $i=1,\ldots,p$, the two edges $\{u,v_i\}$ and $\{v,v_i\}$ either both belong to $\mathrm{Cut}(A)$ or both do not belong to this set.
By setting
\[
A':=(A\setminus \{u,v\})\cup \{w\},
\]
we get $A'|A^c$ as a partition of $V'$. Then, this defines a cut vector $\beta\in \RR^{|E'|}$ and we have $\varphi(\beta)=(\beta,0,\beta^{(p)})=\gamma$, as desired.
\end{proof}

As a consequence of the latter result we get the following face retracts for cut algebras:

\begin{Corollary}\label{contraction-retract}
Let $G$ be a graph and let $G'$ be a graph obtained by an edge contraction from $G$. Then $\KK[G']$ is a face retract of $\KK[G]$.
\end{Corollary}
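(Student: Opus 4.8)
The plan is to deduce Corollary~\ref{contraction-retract} directly from Proposition~\ref{face-contraction} by chaining together the isomorphism of Lemma~\ref{cut polytopal algebra} with the existence of face retracts of polytopal algebras recorded in Subsection~\ref{retract}. The point is that all the genuine geometric work has already been done in Proposition~\ref{face-contraction}; what remains is a formal translation from the language of cut polytopes to that of cut algebras.

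\medskip

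First I would invoke Proposition~\ref{face-contraction} to fix a face $F$ of $\mathrm{Cut}^{\square}(G)$ together with an affine isomorphism $\varphi\colon \mathrm{Cut}^{\square}(G')\to F$. Next, since $F$ is a face of the lattice polytope $\mathrm{Cut}^{\square}(G)$, the face retract construction recalled around equation~(\ref{K[F]}) (which rests on \cite[Corollary~4.34]{BG}) supplies a standard graded algebra retract $\KK[F]$ of $\KK[\mathrm{Cut}^{\square}(G)]$: there are homogeneous homomorphisms $\iota\colon \KK[F]\to \KK[\mathrm{Cut}^{\square}(G)]$ and $\pi\colon \KK[\mathrm{Cut}^{\square}(G)]\to \KK[F]$ with $\pi\circ\iota=\mathrm{id}_{\KK[F]}$. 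Then I would use that an affine isomorphism of lattice polytopes induces an isomorphism of the associated polytopal algebras, so that $\varphi$ yields $\KK[\mathrm{Cut}^{\square}(G')]\iso \KK[F]$ as standard graded $\KK$-algebras.

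\medskip

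Finally I would assemble the chain. Lemma~\ref{cut polytopal algebra} gives natural isomorphisms $\KK[G]\iso \KK[\mathrm{Cut}^{\square}(G)]$ and $\KK[G']\iso \KK[\mathrm{Cut}^{\square}(G')]$. Combining these with $\KK[\mathrm{Cut}^{\square}(G')]\iso \KK[F]$ and the retract pair $(\iota,\pi)$ above, I obtain a homogeneous injection $\KK[G']\to \KK[G]$ and a homogeneous surjection $\KK[G]\to \KK[G']$ whose composite is the identity on $\KK[G']$; by Definition~\ref{retract-def} this exhibits $\KK[G']$ as an algebra retract of $\KK[G]$, and since it arises from the face $F$ it is in fact a face retract. (Implicitly I am using the transitivity of algebra retracts noted after Definition~\ref{retract-def}, together with the obvious fact that an isomorphism of graded algebras is a retract.)

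\medskip

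I expect no real obstacle here, since the corollary is essentially a packaging of Proposition~\ref{face-contraction} together with standard facts about polytopal and toric algebras. The only point requiring minor care is checking that the affine isomorphism $\varphi$ of cut polytopes really does lift to an \emph{isomorphism} of the polytopal algebras $\KK[\mathrm{Cut}^{\square}(G')]$ and $\KK[F]$ compatible with the grading — that is, that $\varphi$ carries lattice points of $\mathrm{Cut}^{\square}(G')$ bijectively to lattice points of $F$ and respects the degree-one generators. This holds because cut polytopes are $\{0,1\}$-polytopes whose lattice points are exactly their vertices (the cut vectors), and $\varphi$ restricts to a bijection on vertex sets; I would state this explicitly but not belabor it.
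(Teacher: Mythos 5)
Your proposal is correct and follows essentially the same route as the paper: both deduce the corollary by combining the affine isomorphism $\mathrm{Cut}^{\square}(G')\iso F$ from Proposition~\ref{face-contraction} with the induced isomorphism of polytopal algebras, the face retract of~\eqref{K[F]}, and the identification of Lemma~\ref{cut polytopal algebra}. The lattice-point compatibility you flag at the end is handled in the paper simply by writing the induced map explicitly on generators, ${\bf y}^{\beta}z\mapsto{\bf y}^{(\beta,0,\beta^{(p)})}z$, which is visibly a bijection on the $\{0,1\}$-lattice points.
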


\begin{proof}
Here we use the notation used in the proof of Proposition~\ref{face-contraction}, where we showed that  ${\mathrm{Cut}}^{\square}(G')$ is affinely isomorphic to a face $F$ of ${\mathrm{Cut}}^{\square}(G)$ via the aforementioned map $\varphi$. This isomorphism induces the following isomorphism between the corresponding polytopal algebras:
\begin{eqnarray*}
\label{isopoly}
\hat{\varphi} \colon \KK[{\mathrm{Cut}}^{\square}(G')]&\rightarrow& \KK[F]\\
{\bf{y}}^{\beta}z&\mapsto & {\bf{y}}^{(\beta,0,\beta^{(p)})}z,
\end{eqnarray*}
for all $\beta\in {\mathrm{Cut}}^{\square}(G')$.

On the other hand, according to \eqref{K[F]}, $\KK[F]$ is a face retract of $\KK[{\mathrm{Cut}}^{\square}(G)]$ which implies with the isomorphism $\hat{\varphi}$, that $\KK[{\mathrm{Cut}}^{\square}(G')]$ is a face retract of $\KK[{\mathrm{Cut}}^{\square}(G)]$ as well.

Finally, by applying the isomorphism given in Lemma~\ref{cut polytopal algebra}, we get that $\KK[G']$ is an algebra retract of $\KK[G]$.
\end{proof}

As examples, we have that $\KK[C_{n-1}]$ and $\KK[K_{n-1}]$ are algebra retracts of $\KK[C_n]$ and $\KK[K_n]$, respectively.

Finally, we discuss a similar operation as edge contraction on graphs. Indeed, after having seen this nice property of the edge contraction, one may think of the same property for another operation called \emph{vertex identification} or \emph{vertex contraction}. In vertex identification of a graph $G=(V,E)$, two different (not necessarily adjacent) vertices $u$ and $v$ are merged into a new vertex $w$ and a new graph $G'=(V',E')$ is provided with
\[
V'=(V\setminus \{u,v\})\cup \{w\}, ~~~~~ \mathrm{and}
\]
\[
E'=\big{\{}e\in E:e\cap N_G(\{u,v\})=\emptyset\big{\}}\cup \big{\{}\{w,z\}:z\in V'\cap N_G(\{u,v\})\big{\}}.
\]
In this case also we remove any possible resulting multiple edges from the obtained graph to get a simple graph. In the case that $u$ and $v$ are adjacent, the vertex contraction of $G$ is the same as the contraction of the edge $\{u,v\}$.

Now, let $G$ and $G'$ be as above. Then the cut algebra $\KK[G']$ is not necessarily an algebra retract of $\KK[G]$. For example, $K_4$ can be obtained by a vertex identification from the graph $G_6$ which is shown in Figure~\ref{fig}. But, $\KK[K_4]$ is not an algebra retract of $\KK[G_6]$, because $\KK[K_4]$ is a complete intersection while $\KK[G_6]$ is not (see Theorem~\ref{complete intersection}).

\section{Retracts of cut algebras from induced subgraphs}\label{induced}

In this section, we discuss the problem under which assumption an induced subgraph of a graph $G$ can provide an algebra retract of the cut algebra of $G$.

Note that an induced subgraph of a graph is also a minor of it which can not be obtained only by edge contractions. For example, $C_4$ has $P_3$ an induced subgraph which can not be obtained only by applying some edge contractions. We come back to this specific example in more details in the sequel.

It was stated in \cite[Lemma~3.1~(1)]{SS} that the cut polytope of an induced subgraph of a graph is (affinely isomorphic to) a face of the cut polytope of the original graph. This statement is not true in this generality, as will be discussed in the following. We are grateful to Hidefumi Ohsugi for showing us the next example.

\begin{Example}\label{Ohsugi}
{\em
The cut polytope of $P_3$ is a square.
The cut polytope of $C_4$ is the $4$-dimensional
crosspolytope, by \cite[Example~1.2]{SS}, which, e.g., also can be deduced from \cite[Proposition~1.7]{O2}.
Observe that crosspolytopes are simplicial. In particular, all
$2$-faces of $\mathrm{Cut}^\square(C_4)$ are triangles, and the square $\mathrm{Cut}^\square(P_3)$ can not be (affinely isomorphic to) a face of $\mathrm{Cut}^\square(C_4)$.
}
\end{Example}

Hence, the cut algebra of an induced subgraph of a graph does not provide necessarily a face retract. But, as we show in the sequel, there are still large classes of induced subgraphs which provide algebra retracts. First, we need the following definition.

\begin{Definition}\label{neighborhood}
{\em Let $G=(V,E)$ be a graph where $V=W\cup W'$ with $W\cap W'=\emptyset$ and $W,W'\neq \emptyset$ and let $H=G_{W}$. Suppose that there exists a vertex $v\in W$ with
$W\cap N_{G}(W')\subseteq N_{H}[v]$. Then we say that $H$ is a \emph{neighborhood-minor} of $G$. }
\end{Definition}

Note that any neighborhood-minor of a graph $G$ is an induced subgraph, and hence a minor of $G$. In addition, note that in the above example, $P_3$ is a neighborhood-minor of $C_4$.

\begin{Remark}\label{lifting}
{\em Despite the notion of neighborhood-minor as defined in this paper is not a classical notion in graph theory, in the special case $|W'|=1$ it has been previously considered in studying cut polytopes for different purposes. Namely the special case occurred in the study of lifting the defining inequalities of facets of an induced subgraph to those of the original graph (see, e.g., \cite[Theorem~2]{D}). }
\end{Remark}

The next theorem is one of the main results of this paper.

\begin{Theorem}\label{golden1}
Let $G$ be a graph, and let $H$ be a neighborhood-minor of $G$. Then $\KK[H]$ is an algebra retract of $\KK[G]$.
\end{Theorem}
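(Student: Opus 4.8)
The plan is to construct the two maps required by Definition~\ref{retract-def} explicitly on the $(s,t)$-monomial generators, where the surjection $\pi$ is the obvious ``restriction to $W$'' (which exists for \emph{any} induced subgraph), while the injection $\iota$ is a carefully oriented ``lift'' that exploits the distinguished vertex $v$. Write $V=W\cup W'$, $H=G_W$, and fix $v\in W$ with $W\cap N_G(W')\subseteq N_H[v]$. Throughout I write $\delta_A$ for the cut vector of $G$ and $\delta_A^H$ for that of $H$, and I use superscripts on $u_A$ to indicate the ambient graph.

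First I would define $\pi\colon\KK[G]\to\KK[H]$ on generators by $u_B^G\mapsto u_{B\cap W}^H$ for $B\subseteq V$. To see this is a well-defined homogeneous $\KK$-algebra homomorphism it suffices to note that for every edge $e=\{i,j\}\in E(H)$ one has $\delta_B(e)=\delta_{B\cap W}^H(e)$, since both endpoints lie in $W$; hence restricting any cut-vector identity $\sum_k\delta_{B_k}=\sum_l\delta_{C_l}$ holding in $\KK[G]$ to the $E(H)$-coordinates produces the corresponding identity in $\KK[H]$, so $\pi$ carries the defining binomials of $I_G$ into $I_H$. As $A=B\cap W$ for $B=A$, the map $\pi$ is surjective, and it is homogeneous because generators map to generators.

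Next, and this is the crucial point, I would build $\iota\colon\KK[H]\to\KK[G]$. For a subset $A\subseteq W$ let $\bar A$ be the unique representative of the partition $A\mid(W\setminus A)$ with $v\notin\bar A$; since $\bar A$ depends only on the partition, the assignment $u_A^H\mapsto u_{\bar A}^G$ is well-defined on the variables of $S_H$, where $\bar A$ is now read as a subset of $V$ (so all of $W'$ sits in the complementary block). The key step is to check that this respects the defining binomial relations, i.e.\ that an $H$-identity $\sum_k\delta_{A_k}^H=\sum_l\delta_{C_l}^H$, with all representatives chosen to avoid $v$, lifts to $\sum_k\delta_{\bar A_k}=\sum_l\delta_{\bar C_l}$ in $\RR^{|E|}$. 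On edges internal to $W$ this is the given identity; on edges internal to $W'$ both sides vanish as $\bar A_k,\bar C_l\subseteq W$; and on a crossing edge $e=\{i_0,j\}$ with $i_0\in W\cap N_G(W')$ and $j\in W'$ the coordinate equals $[i_0\in\bar A_k]$. Here the neighborhood hypothesis enters decisively, because $i_0\in N_H[v]$: either $i_0=v$, whence $[i_0\in\bar A_k]=0$ for all $k$ (and likewise for the $C_l$) by our $v$-avoiding choice, or $\{v,i_0\}\in E(H)$, whence $[i_0\in\bar A_k]=\delta_{A_k}^H(\{v,i_0\})$ (again because $v\notin\bar A_k$), so this crossing coordinate is read off from an $E(H)$-coordinate already balanced by the given $H$-relation. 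In both cases the two sides agree, so $\iota$ is a well-defined homogeneous $\KK$-algebra homomorphism.

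Finally I would verify the retract identity: for $A\subseteq W$ one has $\pi(\iota(u_A^H))=\pi(u_{\bar A}^G)=u_{\bar A\cap W}^H=u_{\bar A}^H=u_A^H$, so $\pi\circ\iota=\mathrm{id}_{\KK[H]}$; in particular $\iota$ is injective, and together with homogeneity of both maps this gives exactly the data of Definition~\ref{retract-def}, proving that $\KK[H]$ is an algebra retract of $\KK[G]$. The main obstacle is the well-definedness of $\iota$, i.e.\ that the lift sends $I_H$ into $I_G$; this is precisely the step that fails for a general induced subgraph, consistent with Example~\ref{Ohsugi}, and it is rescued only by the neighborhood condition, which forces every crossing vertex to be $v$ or a neighbor of $v$ in $H$, so that the extra crossing coordinates are determined by coordinates already present in the $H$-relation.
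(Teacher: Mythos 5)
Your proposal is correct and follows essentially the same route as the paper: the retraction $\pi\colon u_B\mapsto u_{B\cap W}$ and the lift $\iota$ placing $W'$ in the block containing $v$ are exactly the maps used there, and your case analysis for well-definedness of $\iota$ (edges inside $W$, edges inside $W'$, and crossing edges reduced via the edge $\{v,i_0\}$ using the neighborhood hypothesis) mirrors the paper's two cases. Your phrasing in terms of sums of cut vectors, and deducing injectivity of $\iota$ from $\pi\circ\iota=\mathrm{id}$, are minor (and slightly cleaner) presentational variants of the same argument.
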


\begin{proof}
Let $V=W\cup W'$ be the set of vertices of $G$ where $W\cap W'=\emptyset$ and $W,W'\neq \emptyset$, and such that $H=G_{W}$. Since $H$ is a neighborhood-minor of $G$, there exists a vertex $v\in W$ with $W\cap N_{G}(W')\subseteq N_{H}[v]$.

First we define a natural embedding of $S_{H}$ into $S_G$. Let $A|A^c$ be a partition of $W$, where we may assume that $v\in A$. Then $A\cup W'|A^c$ is a partition of $V$. Thus, by mapping $q_{A}$ to $q_{A\cup W'}$, we get a natural embedding $\lambda$ of $S_{H}$ into $S_G$, which induces the homogeneous homomorphism of $\KK$-algebras
\[
\overline{\lambda} \colon S_{H}/I_{H}\longrightarrow S_G/I_G.
\]
It is enough to  show that $\overline{\lambda}$ is well-defined. For this, we need to prove that
\[
\lambda(I_{H})\subseteq I_G.
 \]

Let $f=\prod_{i=1}^dq_{A_i}-\prod_{i=1}^dq_{C_i}$ be a generator of $I_{H}$ for some $d\in \mathbb{N}$, where we may assume that $v\in A_i$ and $v\in C_i$ for all partitions $A_i|{A_i}^c$ and $C_i|{C_i}^c$ of $W$. We have that $\phi_{H}(f)=0$, which implies by the definition of $\phi_{H}$ and using the bi-grading of $\KK[H]$ that
\begin{equation}\label{union cut}
\bigcup_{i=1}^d\mathrm{Cut}(A_i)=\bigcup_{i=1}^d\mathrm{Cut}(C_i).
\end{equation}
We also know
\[
\lambda(f)=\prod_{i=1}^dq_{A_i\cup W'}-\prod_{i=1}^dq_{C_i\cup W'}.
\]
It suffices to show that $\phi_G(\lambda(f))=0$ which is by definition of the map $\phi_G$ and using the bi-grading of $\KK[G]$ equivalent to the fact that
\[
\bigcup_{i=1}^d\mathrm{Cut}(A_i\cup W')=\bigcup_{i=1}^d\mathrm{Cut}(C_i\cup W').
\]

Suppose that $e=\{\ell,k\}\in \mathrm{Cut}(A_i\cup W')$ for some $i\in\{1,\ldots,d\}$, and assume that $k\in A_i\cup W'$ and $\ell\in A^c_i$. We distinguish the following two cases:

{\bf Case~(1).} Suppose that $k\in A_i$. Then $e\in \mathrm{Cut}(A_i)$ which implies by (\ref{union cut}) that $e\in \mathrm{Cut}(C_j)$ for some $j\in \{1,\ldots,d\}$. It is clear that $\mathrm{Cut}(C_j)\subseteq \mathrm{Cut}(C_j\cup W')$. Thus, it follows that $e\in \bigcup_{i=1}^d\mathrm{Cut}(C_i\cup W')$.

{\bf Case~(2).} Suppose that $k\in W'$. Note that $\ell\in W$, since
$\ell\in A^c_i\subseteq W$. Therefore $\ell\neq v$, and $\ell\in N_{H}[v]$, because by assumption we have $W\cap N_{G}(W')\subseteq N_{H}[v]$. It follows that $\ell \in N_{H}(v)\cap A^c_i$ which implies that $\{v,\ell\}\in \mathrm{Cut}(A_i\cup W')$. By Case~(1), we have $\{v,\ell\}\in \mathrm{Cut}(C_j\cup W')$ for some $j\in \{1,\ldots,d\}$. Thus, $e=\{k,\ell\}\in \mathrm{Cut}(C_j\cup W')$ as well, and hence
$e\in \bigcup_{i=1}^d\mathrm{Cut}(C_i\cup W')$.

We get
$\bigcup_{i=1}^d\mathrm{Cut}(A_i\cup W')\subseteq \bigcup_{i=1}^d\mathrm{Cut}(C_i\cup W')$.
The other inclusion follows similarly by symmetry. Hence it follows that $\overline{\lambda}$ is well-defined, as desired.

Let $\overline{\phi}_{H}$ and $\overline{\phi}_{G}$ be the isomorphisms induced by $\phi_{H}$ and $\phi_{G}$, respectively. The map  $\overline{\lambda}$ together with these two isomorphisms gives a homogeneous homomorphism of $\KK$-algebras
\begin{eqnarray*}
\iota \colon \KK[H] &\longrightarrow& \KK[G]
\\
u_A &\mapsto& u_A \prod_{\substack{i\in W', j\in N_G(W')\cap A^c\\
\mathrm{or}\\
j\in W', i\in N_G(W')\cap A^c}}s_{ij}
\prod_{\substack{i\in W', j\in W'\cup (N_G(W')\cap A)\\
\mathrm{or}\\
j\in W', i\in W'\cup (N_G(W')\cap A)}}t_{ij}.
\end{eqnarray*}
Thus, we obtain the following commutative diagram:
\begin{displaymath}\label{diagram}
\xymatrix{
S_{H}/I_{H} \ar[r]^{\overline{\lambda}} \ar@{->}[d]^{\overline{\phi}_{H}} &
S_G/I_G
\ar@{->}[d]^{\overline{\phi}_{G}}\\
\KK[H] \ar[r]^{\iota} & \KK[G] }.
\end{displaymath}

Next we show that $\iota$ is injective. Let $u_{A_1},\ldots u_{A_p}$ be all the monomial generators of the $\KK$-algebra $\KK[H]$, and let $g$ be any polynomial in the polynomial ring $\KK[x_1,\dots,x_p]$ such that $\iota(g(u_{A_1},\ldots u_{A_p}))=0$ in $\KK[G]$. In the latter equality, we put $s_{ij}=t_{ij}=1$ for all $i,j\in V$ such that $\{i,j\}\cap W'\neq \emptyset$. This implies by definition of $\iota$ that $g(u_{A_1},\ldots u_{A_p})=0$. Hence $\iota$ is injective.

Note that if $A|A^c$ is a partition of $V$, then $A\cap W|A^c\cap W$ is clearly a partition of $W$. So, we define the homogeneous homomorphism of $\KK$-algebras
\begin{eqnarray*}
\pi \colon \KK[G]&\longrightarrow& \KK[H]
\\
u_{A}&\mapsto& u_{A\cap W}.
\end{eqnarray*}

It is enough to see that $\pi$ is well-defined. Suppose that $u_{A_1},\ldots,u_{A_r}$ are all the monomial generators of the $\KK$-algebra $\KK[G]$, and let $h$ be any polynomial in the polynomial ring $\KK[x_1,\ldots,x_r]$, with $h(u_{A_1},\ldots,u_{A_r})=0$. Then by putting $s_{ij}=t_{ij}=1$ for all $i,j$ such that $\{i,j\}\cap W'\neq \emptyset$ in the last equality, we get $h(\pi(u_{A_1}),\ldots , \pi(u_{A_r}))=0$.

Finally we need to check that $\pi\circ \iota=\mathrm{id}_{\KK[H]}$ which implies that $\KK[H]$ is an algebra retract of $\KK[G]$, and hence the desired result of the theorem follows. It suffices to check the latter equality for any monomial generator of the $\KK$-algebra $\KK[H]$, i.e. $u_{A}$ where $A\subseteq W$. But, this easily holds, since $u_{A}$ is not divided by any variables $s_{ij}$ or $t_{ij}$ with $\{i,j\}\cap W'\neq \emptyset$.
\end{proof}

\begin{Remark}\label{nonface retract}
{\em We would like to remark that the algebra retract given in Theorem~\ref{golden1} is not a face retract. For this observe at first that a face retract of a cut algebra is always $\ZZ^{2|E|}$-homogeneous. Now let $G=C_4$ and $H=P_3$ with $V(G)=\{1,2,3,4\}$ and $V(H)=\{1,2,3\}$. Then one can observe that the map $\iota$ is not $\ZZ^{2|E|}$-homogeneous. Indeed, in this case $t_{12}t_{23}t_{34}t_{14}-t_{12}t_{23}s_{34}s_{14}$ is an element of $\ker \pi$ which is not $\ZZ^{2|E|}$-homogeneous. Here $\iota$ and $\pi$ are as in the proof of Theorem~\ref{golden1},
}
\end{Remark}

\begin{Remark}\label{natural embedding}
{\em One may ask if there exists an induced subgraph $H$ of a graph $G$ such that $\KK[H]$ is not an algebra retract of $\KK[G]$. So far we do not know such examples by comparing their algebraic invariants and properties preserved by retracts. But, there are examples for which there is no algebra retract induced by an embedding of $S_H$ into $S_G$ via mapping the variables of $S_H$ to the ones of $S_G$ as in the proof of Theorem~\ref{golden1}. The graph $G_{10}$ depicted in Figure~\ref{fig} together with its induced subgraph $C_4$ is such an example.
}
\end{Remark}

In the following we consider some special cases of neighborhood-minors of a graph.

In \cite{SS}, in particular, a \emph{clique-sum} $G_1\sharp G_2$ of two graphs $G_1$ and $G_2$ was studied. More generally, an \emph{$H$-sum} of $G_1$ and $G_2$, along an induced subgraph $H$ of both of them, is defined as follows:

\begin{Definition}\label{H-sum}
{\em Let $G_1=(V_1,E_1)$, $G_2=(V_2,E_2)$ be two graphs and assume that ${(G_1)}_{V_1\cap V_2}={(G_2)}_{V_1\cap V_2}$ which we denote by $H$. Then an \emph{$H$-sum} $G_1\sharp_{H} G_2$ of $G_1$ and $G_2$, along the induced subgraph $H$, is defined to be the graph with
\[
V(G_1\sharp_{H} G_2)=V_1\cup V_2 ~~~~~~ \mathrm{and} ~~~~~~ E(G_1\sharp_{H} G_2)=E_1\cup E_2.
\]
}
\end{Definition}

In particular, if $H$ is a clique of $G_1$ and $G_2$, then the definition of an $H$-sum is the same as the one of a clique-sum.

We would like to mention that this operation on two graphs is sometimes called \emph{gluing}, and then the common induced subgraph $H$ of the two graphs is sometimes called a \emph{clone}. Some authors also use other notation for this, but here we follow the setting used in earlier papers on cut ideals.

We would also like to remark that by fixing a labeling for $G_1$, $G_2$ and $H$, the $H$-sum $G_1\sharp_{H} G_2$ is uniquely determined. In general, this is not true even up to isomorphism, though it might be true in special cases.

For example, in the case of the graphs $K_2$ and $K_3$ with $H=K_1$, the $H$-sum is unique up to isomorphism of graphs. Therefore, we use the notation $K_2\sharp_{K_1} K_3$. But if we consider the graphs $K_3$ and $P_3$ with $H=K_1$, then both of the graphs $G_3$ and $G_4$ in Figure~\ref{fig} are an $H$-sum of $K_3$ and $P_3$. Hence the notation $K_3\sharp_{K_1} P_3$ can not be used. In the sequel, we use this notation only in the case where there exists a uniquely determined $H$-sum (up to isomorphism).

\begin{Corollary}\label{glue}
Let $G_1$ and $G_2$ be two graphs on $V_1$ and $V_2$, respectively, and let $H={(G_1)}_{V_1\cap V_2}={(G_2)}_{V_1\cap V_2}$ with $|V_1\cap V_2|\geq 1$. If $H$ has a vertex $v$ with $\deg_{H}(v)=|V_1\cap V_2|-1$, then $G_i$ is a neighborhood-minor of $G$, and hence $\KK[G_i]$ is an algebra retract of $\KK[G_1\sharp_{H} G_2]$ for $i=1,2$.

In particular, for $i=1,2$, $\KK[G_i]$ is an algebra retract of the cut algebra of any clique-sum of $G_1$ and $G_2$.
\end{Corollary}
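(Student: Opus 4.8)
The plan is to derive Corollary~\ref{glue} directly from Theorem~\ref{golden1} by verifying that the hypothesis on the degree of $v$ exactly produces the neighborhood-minor condition. Fix $i=1$ (the case $i=2$ being symmetric) and let $G:=G_1\sharp_{H} G_2$. The natural candidate for the decomposition $V(G)=W\cup W'$ required by Definition~\ref{neighborhood} is $W:=V_1$ and $W':=V_2\setminus V_1$. I would first check that this is a legitimate decomposition: clearly $W\cap W'=\emptyset$ and $W\cup W'=V_1\cup V_2=V(G)$, and $W=V_1\neq\emptyset$. One must also handle the degenerate case $V_2\subseteq V_1$ (so $W'=\emptyset$), but in that situation $G_1=G$ and the statement is trivial since a graph is an algebra retract of itself via the identity; so we may assume $W'\neq\emptyset$. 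It remains to confirm that $G_W=G_1$, which holds because $E(G)=E_1\cup E_2$ and any edge of $E_2$ with both endpoints in $V_1$ lies in $E(H)=E((G_2)_{V_1\cap V_2})\subseteq E_1$, so the induced graph on $V_1$ recovers exactly $G_1$.

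The key step is to show that the distinguished vertex $v$ with $\deg_H(v)=|V_1\cap V_2|-1$ satisfies $W\cap N_G(W')\subseteq N_H[v]$, where here $H$ in Definition~\ref{neighborhood} denotes $G_W=G_1$ (not the gluing subgraph, which is an unfortunate but harmless clash of notation). The point is to identify $W\cap N_G(W')$, the set of vertices of $V_1$ adjacent in $G$ to some vertex of $V_2\setminus V_1$. Because the only edges of $G$ are those of $E_1\cup E_2$, and $E_1$ has no endpoint in $W'=V_2\setminus V_1$, any edge joining a vertex of $V_1$ to a vertex of $W'$ must come from $E_2$; such an edge has both endpoints in $V_2$, so its endpoint in $V_1$ lies in $V_1\cap V_2$. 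Hence $W\cap N_G(W')\subseteq V_1\cap V_2=V(H)$. The degree hypothesis $\deg_H(v)=|V_1\cap V_2|-1$ says precisely that $v$ is adjacent in the gluing subgraph $H$ to every other vertex of $V_1\cap V_2$, i.e.\ $V_1\cap V_2\subseteq N_H[v]$ (neighbors together with $v$ itself). Since $H$ is a subgraph of $G_1$ on the vertex set $V_1\cap V_2$, its edges are edges of $G_1$, so the neighbors of $v$ in $H$ are neighbors of $v$ in $G_1$, giving $V_1\cap V_2\subseteq N_{G_1}[v]$. Combining, $W\cap N_G(W')\subseteq V_1\cap V_2\subseteq N_{G_1}[v]=N_{G_W}[v]$, which is exactly the neighborhood-minor condition.

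Having verified the hypotheses, Theorem~\ref{golden1} immediately yields that $\KK[G_1]=\KK[G_W]$ is an algebra retract of $\KK[G]=\KK[G_1\sharp_{H}G_2]$, and by the symmetric choice $W=V_2$, $W'=V_1\setminus V_2$ the same holds for $\KK[G_2]$. For the final ``in particular'' clause, I would observe that when $H$ is a clique, every vertex of $H$ is adjacent to every other vertex of $V(H)=V_1\cap V_2$, so the degree hypothesis $\deg_H(v)=|V_1\cap V_2|-1$ holds for \emph{any} choice of $v\in V(H)$; thus the clique-sum is a special instance and the algebra retract conclusion for $\KK[G_i]$ follows at once.

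The main obstacle I anticipate is purely notational rather than mathematical: the symbol $H$ is overloaded, denoting both the gluing subgraph ${(G_1)}_{V_1\cap V_2}$ in the statement of the corollary and the induced subgraph $G_W$ in Definition~\ref{neighborhood} and Theorem~\ref{golden1}. Care must be taken that the vertex $v$ supplied by the degree hypothesis on the \emph{gluing} subgraph correctly serves as the distinguished vertex in the \emph{neighborhood-minor} condition, which is what the chain $N_H[v]\subseteq N_{G_1}[v]=N_{G_W}[v]$ accomplishes. Beyond this bookkeeping, the argument is a direct structural verification with no delicate estimates.
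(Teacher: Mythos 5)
Your proposal is correct and follows essentially the same route as the paper's own proof: the decomposition $W=V_1$, $W'=V_2\setminus V_1$, the chain of inclusions $W\cap N_G(W')\subseteq V_1\cap V_2\subseteq N_{G_1}[v]$, an appeal to Theorem~\ref{golden1}, and the observation that every vertex of a clique on $k$ vertices has degree $k-1$. Your extra checks (the degenerate case $W'=\emptyset$ and the verification that $G_W=G_1$) are minor refinements the paper leaves implicit, not a different argument.
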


\begin{proof}
Let $G:=G_1\sharp_{H} G_2$, $W:=V_1$ and $W':=V_2\setminus V_1$.
Note that $v\in W$, and by definition of $G$, we have $N_{G}(W')\subseteq V_2$ and $V_1\cap V_2\subseteq N_{G_1}[v]$, where the latter inclusion  holds since $\deg_{H}(v)=|V_1\cap V_2|-1$. Altogether we have that
\[
W\cap N_G(W')\subseteq V_1\cap V_2\subseteq N_{G_1}[v],
\]
and hence $G_1$ is a neighborhood-minor of $G$. Thus, by Theorem~\ref{golden1}, we have that $\KK[G_1]$ is an algebra retract of $\KK[G]$, as desired. Similarly, $G_2$ is also a neighborhood-minor of $G$, and hence $\KK[G_2]$ is an algebra retract of $\KK[G]$ as well.

The second assertion follows, since in a clique-sum of $G_1$ and $G_2$, the graph $H$ is indeed a clique, say on $k$ vertices, whose all vertices have clearly degree $k-1$ in $H$.
\end{proof}

A straightforward consequence of Theorem~\ref{golden1} is the following corollary. For this we recall before the definition of a vertex duplication in a graph. A \emph{duplication} of a vertex $v$ of a graph $G=(V,E)$ produces a graph $G'=(V',E')$ by adding a new vertex $v'$ such that
\[
V'=V\cup \{v'\}
\]
and
\[
E'=E\cup \big{\{}\{u,v'\}: u\in N_G(v) \big{\}}.
\]
Note that $N_{G'}(v') = N_{G'}(v)=N_G(v)$.

\begin{Corollary}\label{duplicate}
Let $G$ be a graph on $V$, and let $u,v\in V$ such that $u\neq v$ and $N_{G}(u)\subseteq N_{G}[v]$. Then ${G}_{V\setminus \{u\}}$ is a neighborhood-minor of $G$, and hence $\KK[{G}_{V\setminus \{u\}}]$ is an algebra retract of $\KK[G]$. In particular, if $G'$ is obtained by a duplication of a vertex of $G$, then $\KK[G]$ is an algebra retract of $\KK[G']$.
\end{Corollary}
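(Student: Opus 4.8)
The plan is to obtain this corollary as an immediate application of Theorem~\ref{golden1}, so the only real work is to verify that ${G}_{V\setminus\{u\}}$ satisfies the definition of a neighborhood-minor (Definition~\ref{neighborhood}). First I would set $W:=V\setminus\{u\}$ and $W':=\{u\}$, so that $W\cap W'=\emptyset$, $W\cup W'=V$, and $H:={G}_{W}={G}_{V\setminus\{u\}}$. Both sets are nonempty: $W'=\{u\}$ trivially, and $W$ contains $v$ since $u\neq v$. The vertex proposed for the definition is the given $v\in W$, and the entire task reduces to checking the inclusion $W\cap N_{G}(W')\subseteq N_{H}[v]$.

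Next I would carry out the set-theoretic verification. Since $W'=\{u\}$ we have $N_{G}(W')=N_{G}(u)$, and because $G$ has no loops, $u\notin N_{G}(u)$, so $N_{G}(u)\subseteq V\setminus\{u\}=W$ and hence $W\cap N_{G}(W')=N_{G}(u)$. On the other hand, passing to the induced subgraph $H={G}_{V\setminus\{u\}}$ gives $N_{H}(v)=N_{G}(v)\setminus\{u\}$, so that $N_{H}[v]=(N_{G}(v)\setminus\{u\})\cup\{v\}$. Now I combine the hypothesis $N_{G}(u)\subseteq N_{G}[v]=N_{G}(v)\cup\{v\}$ with the loop-free fact $u\notin N_{G}(u)$ to deduce $N_{G}(u)\subseteq\big(N_{G}(v)\cup\{v\}\big)\setminus\{u\}=(N_{G}(v)\setminus\{u\})\cup\{v\}=N_{H}[v]$, where the last simplification uses $v\neq u$. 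This establishes $W\cap N_{G}(W')\subseteq N_{H}[v]$, so ${G}_{V\setminus\{u\}}$ is a neighborhood-minor of $G$, and Theorem~\ref{golden1} immediately yields that $\KK[{G}_{V\setminus\{u\}}]$ is an algebra retract of $\KK[G]$.

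For the ``in particular'' statement I would specialize the first part. Let $G'$ be obtained by duplicating a vertex $v$ of $G$, producing a new vertex $v'$ with $N_{G'}(v')=N_{G'}(v)=N_{G}(v)$; note that $v$ and $v'$ are not adjacent since $v\notin N_{G}(v)$, and that $G={G'}_{V'\setminus\{v'\}}$. I then apply the first part to the graph $G'$ with the roles $u:=v'$ and $v:=v$: we have $v'\neq v$ and $N_{G'}(v')=N_{G'}(v)\subseteq N_{G'}[v]$, so the hypothesis $N_{G'}(v')\subseteq N_{G'}[v]$ holds. Consequently $G={G'}_{V'\setminus\{v'\}}$ is a neighborhood-minor of $G'$, and therefore $\KK[G]$ is an algebra retract of $\KK[G']$, as claimed.

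There is essentially no serious obstacle here, since the heavy lifting is done by Theorem~\ref{golden1}; the one point demanding care is the set manipulation in the second step, namely ensuring that when we restrict from $N_{G}[v]$ to $N_{H}[v]$ by deleting $u$ we do not lose any neighbor of $u$. This works precisely because $u$ is never a neighbor of itself and because $v\neq u$, so that $v$ survives in $N_{H}[v]$; I would flag this as the only place where the loop-free assumption and the distinctness $u\neq v$ are genuinely used.
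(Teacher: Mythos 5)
Your proposal is correct and follows essentially the same route as the paper: set $W=V\setminus\{u\}$, $W'=\{u\}$, verify $W\cap N_G(W')\subseteq N_{G_W}[v]$ directly from the hypothesis $N_G(u)\subseteq N_G[v]$, and invoke Theorem~\ref{golden1}; your version merely spells out the set manipulations (and the specialization to vertex duplication) in more detail than the paper's one-line chain of inclusions.
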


\begin{proof}
Let $W:=V\setminus \{u\}$ and $W':=\{u\}$. Then we have
\[
W\cap N_G(W')=(V\setminus \{u\})\cap N_{G}(u)\subseteq (V\setminus \{u\})\cap N_{G}[v]=N_{G_W}[v].
\]
Hence ${G}_{V\setminus \{u\}}$ is a neighborhood-minor of $G$. Thus, by Theorem~\ref{golden1} the result follows.
\end{proof}

In the following we discuss some well-known families of graphs whose certain induced subgraphs give algebra retracts.

\begin{Example}\label{Different classes}
\begin{enumerate}
\em{\item {\em{Chordal graphs}}. Let $G$ be a chordal graph (i.e. a graph whose induced cycles have length $3$). Then by Dirac's theorem in \cite{Di} (see also, e.g., \cite[Lemma~9.2.6]{HH}), the facets of the clique complex $\Delta(G)$ of $G$ can be ordered, say  $F_1,\ldots,F_r$, such that for each $i=1,\ldots,r$, the facet $F_i$ is a leaf of the simplicial complex $\left\langle F_1,\ldots,F_i\right\rangle$. Recall that by a \emph{leaf} of a simplicial complex $\Delta$, we mean a facet $F$ such that either it is the only facet of $\Delta$ or there exists a facet $G\neq F$ of $\Delta$ with $H\cap F\subseteq G\cap F$ for all facet $H\neq F$ in $\Delta$. Such an order of facets of a simplicial complex is called a \emph{leaf order}, and such a simplicial complex is called \emph{quasi-forest} (see \cite{Z}). Now, let $\Delta_i:=\left\langle F_1,\ldots,F_i\right\rangle$ for all $i=1,\ldots,r$. Then we have $\Delta_i=\Delta(G_i)$ which is the clique complex of the induced subgraph  $G_i$ of $G$ on the vertex set $\cup_{j=1}^iF_j$. In particular, $G_r=G$. It is also clear that $G_i$ is an induced subgraph of $G_{i+1}$ for all $i=1,\ldots,r-1$. Since $F_i$ is a leaf of $\Delta_i$, there exists some $j\in \{1,\ldots,i-1\}$ such that $F_i\cap F_{\ell}\subseteq F_j\cap F_i$ for all $\ell=1,\ldots,i-1$.
Then it follows that for all $i$, $G_{i+1}$ is indeed the clique-sum of $G_{i}$ and the complete graph with the vertex set $F_i$ over the clique $F_j\cap F_i$. Therefore, by Corollary~\ref{glue}, for all $i=1,\ldots,r-1$, the algebra $\KK[G_i]$ is an algebra retract of $\KK[G_{i+1}]$, and hence of $\KK[G]$. In particular, since trees are chordal graphs, by removing a vertex of degree one from a tree, we always obtain an algebra retract of the cut  algebra associated to the original tree.

\item {\em{Complete $t$-partite graphs}}. Let $G$ be a complete $t$-partite graph on the vertex set $V$ with the partition $V_1,\ldots,V_t$. We consider the following possibilities to determine retracts:
\begin{enumerate}
\item Let $v\in V_k$ for some $k\in \{1,\dots,t\}$. By definition, $N_G(v)=V\setminus V_k$. Let $H$ be an induced subgraph of $G$ which is also complete $t$-partite, with the partition $W_1,\ldots,W_t$ of its vertex set where $W_i$ is a nonempty subset of $V_i$ for each $i$. Then it follows from Corollary~\ref{duplicate} that $\KK[H]$ is an algebra retract of $\KK[G]$.

\item Let $H$ be an induced subgraph of $G$ which is a complete $s$-partite graph with the partition $W_1,\ldots,W_s$ of its vertex set, where
$\emptyset\neq W_i\subseteq V_i$ for each $i=1,\ldots,s$, and
$1\leq s < t$. Assume in addition that $|W_i|=1$ for some $i\in \{1,\ldots,s\}$. We claim that $\KK[H]$ is an algebra retract of $\KK[G]$.

Indeed, let $w$ be the only element of $W_i$. Then we choose an element $v$ of $V_{s+1}$, and consider the induced subgraph $H_1$ of $G$ on the vertex set $\{v\}\cup (\bigcup_{i=1}^s W_i)$ which is a complete $(s+1)$-partite graph. Since $N_{H_1}(v)=N_{H_1}[w]$, Corollary~\ref{duplicate} implies that $\KK[H]$ is an algebra retract of $\KK[H_1]$. By repeating this procedure, we get after $t-s$ steps, a sequence of graphs, and hence algebra retracts. In fact, we get that $\KK[H_i]$ is an algebra retract of $\KK[H_{i+1}]$ for $i=1,\ldots,t-s-1$,  where $H_i$ is a complete $(s+i)$-partite induced subgraph of $G$. By the above discussion in~(a), the complete $t$-partite subgraph $H_{s-t}$ of $G$ provides an algebra retract $\KK[H_{s-t}]$ of $\KK[G]$. Hence, $\KK[H]$ is an algebra retract of $\KK[G]$.
\end{enumerate}

\item {\em{Ferrers graphs}}. First recall that a Ferrers graph $G$ is a bipartite graph whose vertex set $V$ can be partitioned into $X=\{x_1,\ldots,x_n\}$ and $Y=\{y_1,\ldots,y_m\}$ such that $\{x_1,y_m\}$
and $\{x_n,y_1\}$ are edges of $G$, and in addition, if $\{x_i,y_j\}$ is an edge of $G$, then so is $\{x_p,y_q\}$ for
$1\leq p\leq i$ and $1\leq q\leq j$. In particular, a complete bipartite graph is a Ferrers graph. We claim that for any induced subgraph $H$ of a Ferrers graph $G$, the algebra $\KK[H]$ is an algebra retract of $\KK[G]$.

Since by a relabeling of the vertices (if needed), any induced subgraph of $G$ is again a Ferrers graph, it is enough to consider an induced subgraph of $G$ which has only one vertex less than $G$. Without loss of generality, assume that $H$ is an induced subgraph of $G$ on the vertex set $V\setminus \{y_k\}$ for some $k\in \{1,\ldots,m\}$. Since by definition, $\{x_1,y_{\ell}\}$ is an edge of $G$ for all $\ell=1,\ldots,m$, it follows that $N_G(y_k)$ is a non-empty subset of $X$. Let $t$ be the greatest index such that $x_t\in N_G(y_k)$, which implies by definition of a Ferrers graph that
$N_G(y_k)=\{x_1,\ldots,x_t\}$. Since $\{y_1,x_n\}$ is an edge of $G$, it follows that $\{y_1,x_i\}$ is an edge of $G$ for all $i=1,\ldots,n$, and hence $N_G[y_1]=X$. Thus $N_G(y_k)\subseteq N_G[y_1]$, and using  Corollary~\ref{duplicate}, we get that $\KK[H]$ is an algebra retract of $\KK[G]$.

\item {\em{Ring graphs}}. A ring graph is a graph obtained from cycles and trees by clique-sums along subgraphs isomorphic to $K_1$ and $K_2$, (see, e.g., \cite{GRV1,GRV2,NP}). In particular, cycles and trees are ring graphs.

So, if $G$ is a ring graph, then it is the clique-sum of a ring graph $H_1$ and a cycle or a tree, say $H_2$, i.e.  $G=H_1\sharp_{K_i} H_2$ where $i=1$ or $i=2$. Hence, $\KK[H_1]$ and $\KK[H_2]$ are both algebra retracts of $\KK[G]$ by Corollary~\ref{glue}.
}
\end{enumerate}
\end{Example}

\section{Applications}\label{application}

In this section, we apply the tools from the previous sections to discuss certain algebraic properties and invariants of cut ideals and algebras as some applications.

First, we define the following notion:

\begin{Definition}
{\em Let $G$ be a graph. Then:
\begin{enumerate}
\item [(a)] We say that a graph $G'$ is a \emph{combinatorial retract} of $G$, if there is a sequence $G_0,G_1,\ldots,G_r$ of graphs where $G_0=G$ and $G_r=G'$ and for each $i=1,\ldots,r$, $G_i$ is either
\begin{enumerate}
\item[(i)] obtained by an edge contraction from $G_{i-1}$, or
\item[(ii)] a neighborhood-minor of $G_{i-1}$.
\end{enumerate}
\item [(b)] Given other graphs $H_1,\ldots,H_t$ with $t\geq 1$, then we say that $G$ is \emph{$(H_1,\ldots,H_t)$-combinatorial retract-free} if it has no combinatorial retract isomorphic to any of $H_1,\ldots,H_t$.
\end{enumerate}
}
\end{Definition}

\begin{Remark}\label{combin retract K_n}
{\em Note that any combinatorial retract is a minor. Observe that for a graph $G$ having a neighborhood-minor isomorphic to a complete graph $K_n$ is equivalent to having $K_n$ as a minor. Indeed, if $G$ has such a minor, then it is obtained only by a sequence of edge contractions by Remark~\ref{contraction K_n}. So, it is a combinatorial retract of $G$. }
\end{Remark}

By Corollary~\ref{contraction-retract} and Theorem~\ref{golden1}, if $G'$ is a combinatorial retract of $G$, then $\KK[G']$ is an algebra retract of $\KK[G]$. As special cases see Example~\ref{Different classes}, where the mentioned induced subgraphs are combinatorial retracts of the given graph.

Next we discuss some algebraic properties of cut ideals. Following \cite{SS}, for a graph $G$, we let $\mu(I_G)$ be the highest degree of an element in a minimal generating system of $I_G$.

\begin{Corollary}\label{greatest generator}
Let $G$ be a graph and let $G'$ be a combinatorial retract of $G$. Then:
\begin{enumerate}
 \item[{\em (a)}] $\beta_{i,j}^{S_{G'}}(I_{G'})\leq \beta_{i,j}^{S_G}(I_G)$ for all $i,j$.
\item[{\em (b)}] $\reg_{S_{G'}} I_{G'}\leq \reg_{S_{G}} I_G$ and $\projdim_{S_{G'}} I_{G'}\leq \projdim_{S_{G}} I_G$.
\item[{\em (c)}] The number of minimal generators of a given degree $j$ of $I_{G'}$ does not exceed the ones of $I_G$.
\item[{\em (d)}] $\mu(I_{G'})\leq \mu(I_G)$.
\end{enumerate}
\end{Corollary}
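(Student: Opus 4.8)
The plan is to reduce everything to the single-step case and then chain the inequalities along the defining sequence of the combinatorial retract. Recall that by definition, $G'$ being a combinatorial retract of $G$ means there is a sequence $G_0=G, G_1,\ldots,G_r=G'$ where each $G_i$ is obtained from $G_{i-1}$ either by an edge contraction or as a neighborhood-minor. The crucial observation, already recorded in the paragraph preceding the statement, is that in either case $\KK[G_i]$ is an algebra retract of $\KK[G_{i-1}]$: this follows from Corollary~\ref{contraction-retract} in the edge-contraction case and from Theorem~\ref{golden1} in the neighborhood-minor case. Since algebra retracts compose (as noted after Definition~\ref{retract-def}), it follows immediately that $\KK[G']$ is an algebra retract of $\KK[G]$, equivalently $S_{G'}/I_{G'}$ is an algebra retract of $S_G/I_G$.

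First I would invoke Proposition~\ref{Betti} to obtain parts (a) and (b). To apply that proposition I must check its hypotheses: the ambient rings $S_{G'}$ and $S_G$ are polynomial rings, and the ideals $I_{G'}$, $I_G$ must contain no linear forms. The latter could be an obstacle, since Proposition~\ref{linear forms} shows that $I_G$ contains linear forms precisely when $G$ is disconnected. The clean way to handle this is to pass to the connected case first, or to argue that linear forms in the ideal correspond to isolated-vertex/disconnectedness phenomena that can be factored out without affecting the graded Betti numbers in positive degree; alternatively one restricts attention to the subideal generated by the nonlinear part. Assuming the hypotheses are met, Proposition~\ref{Betti}(a) gives $\beta_{i,j}^{S_{G'}}(I_{G'})\leq \beta_{i,j}^{S_G}(I_G)$ for all $i,j$, which is exactly part~(a), and parts (b) and (c) of that proposition give the regularity and projective dimension inequalities in part~(b) here.

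Parts (c) and (d) then follow formally from part (a). Indeed, the number of minimal generators of $I_{G'}$ in degree $j$ equals $\beta_{0,j}^{S_{G'}}(I_{G'})$, since the minimal generators in degree $j$ are counted by the $j$-graded part of $\Tor_0$. Applying the $i=0$ case of part~(a) yields
\[
\beta_{0,j}^{S_{G'}}(I_{G'})\leq \beta_{0,j}^{S_G}(I_G),
\]
which is precisely the assertion of part~(c). For part~(d), note that $\mu(I_{G'})=\max\{j:\beta_{0,j}^{S_{G'}}(I_{G'})\neq 0\}$ by definition of $\mu$ as the top degree of a minimal generator; comparing with the same quantity for $G$ and using part~(c) gives $\mu(I_{G'})\leq \mu(I_G)$.

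I expect the main obstacle to be the linear-forms hypothesis of Proposition~\ref{Betti}, together with the bookkeeping needed to ensure it is satisfied after the combinatorial-retract operations; a neighborhood-minor of a connected graph can in principle be disconnected, so one cannot simply assume connectedness is preserved along the whole sequence. The honest fix is either to verify the hypotheses hold at each step of the chain (applying Proposition~\ref{Betti} stepwise and then transitively combining the inequalities), or to observe that the retraction maps $\iota,\pi$ constructed in Theorem~\ref{golden1} and Corollary~\ref{contraction-retract} respect the grading in a way compatible with separating off any linear part. Once that technical point is settled, the rest is a direct and routine appeal to Proposition~\ref{Betti} and the standard interpretation of $\beta_{0,j}$ as counting minimal generators by degree.
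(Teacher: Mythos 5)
Your argument is exactly the paper's: chain the single-step retracts via Corollary~\ref{contraction-retract} and Theorem~\ref{golden1}, apply Proposition~\ref{Betti} for (a) and (b), and read off (c) and (d) from the $i=0$ case of (a). The linear-forms hypothesis you worry about is a subtlety the paper's own (very terse) proof also passes over in silence, so your extra caution is reasonable but does not constitute a divergence from the intended argument.
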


\begin{proof}
Parts (a) and (b) are just immediate consequences of Proposition~\ref{Betti}, Corollary~\ref{contraction-retract} and Theorem~\ref{golden1}.

Part (c) just follows from (a) by considering the case $i=0$ and (d) is a trivial consequence from this.
\end{proof}

In \cite[Corollary~3.3~(2)]{SS}, the authors discussed the inequality of part~(d) in Corollary~\ref{greatest generator} in the case of contracting an edge which is a particular case here.
The proof of the induced subgraph case in \cite[Corollary~3.3~(1)]{SS}
is no longer valid since it is based on \cite[Lemma~3.2~(1)]{SS} which is not correct (see Example~\ref{Ohsugi}). But in certain special cases one can deduce the inequality; see, e.g., Theorem~\ref{golden1}.

The following corollary is an immediate consequence of Corollary~\ref{greatest generator} which verifies a weaker version of  \cite[Conjecture~3.1]{SS} where it was conjectured that the set of graphs $G$ with $\mu(I_G)\leq k$ is minor-closed for any $k$.

\begin{Corollary}\label{combin retract closed}
The set of graphs $G$ with $\mu(I_G)\leq k$ is combinatorial retract-closed for any $k$.
\end{Corollary}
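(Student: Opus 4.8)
The plan is to derive Corollary~\ref{combin retract closed} directly from Corollary~\ref{greatest generator}~(d) together with the transitivity of the combinatorial retract relation. The statement asserts that the class $\mathcal{C}_k := \{G : \mu(I_G) \leq k\}$ is closed under passing to combinatorial retracts; that is, if $G \in \mathcal{C}_k$ and $G'$ is a combinatorial retract of $G$, then $G' \in \mathcal{C}_k$ as well.

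First I would unwind the definition of ``combinatorial retract-closed'': a class of graphs is combinatorial retract-closed if, whenever it contains a graph $G$, it also contains every combinatorial retract of $G$. So I fix a graph $G$ with $\mu(I_G) \leq k$ and let $G'$ be an arbitrary combinatorial retract of $G$. By Corollary~\ref{greatest generator}~(d) we have $\mu(I_{G'}) \leq \mu(I_G) \leq k$, and hence $G' \in \mathcal{C}_k$. Since $G$ and $G'$ were arbitrary, the class $\mathcal{C}_k$ is combinatorial retract-closed, which is precisely the claim. This works uniformly for every $k$, so no case distinction on $k$ is needed.

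The only point requiring a moment of care is that a combinatorial retract of a combinatorial retract is again a combinatorial retract, so that the relation behaves well with respect to iteration; but this is immediate from the definition, as one simply concatenates the two defining sequences $G = G_0, \ldots, G_r = G'$ and $G' = G'_0, \ldots, G'_s = G''$ into a single sequence witnessing that $G''$ is a combinatorial retract of $G$. In the present formulation, however, even this observation is not strictly necessary, since the statement quantifies over a single combinatorial retract $G'$ of $G$, and the argument above settles it in one line via part~(d).

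I do not expect any genuine obstacle here: the entire content of the corollary has already been established in Corollary~\ref{greatest generator}~(d), and this statement merely repackages that inequality in the language of closed classes. The proof is therefore a one-sentence invocation of the preceding corollary.

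\begin{proof}
Let $G$ be a graph with $\mu(I_G)\leq k$, and let $G'$ be a combinatorial retract of $G$. Then by Corollary~\ref{greatest generator}~(d) we have $\mu(I_{G'})\leq \mu(I_G)\leq k$. Hence the set of graphs $G$ with $\mu(I_G)\leq k$ is combinatorial retract-closed.
\end{proof}
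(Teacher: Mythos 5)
Your proof is correct and matches the paper exactly: the paper states this corollary as an immediate consequence of Corollary~\ref{greatest generator}~(d), which is precisely the one-line argument you give. Nothing further is needed.
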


As another consequence of Corollary~\ref{greatest generator}, we can classify those connected graphs whose cut ideals are generated in a single degree.

\begin{Corollary}\label{single degree}
Let $G$ be a connected and non-complete graph. Then $I_G$ has a generator of degree~$2$. In particular, the following statements are equivalent:
\begin{enumerate}
\item [{\em (a)}] $I_G$ is generated in a single degree;
\item [{\em (b)}] $I_G$ is generated in degree $2$;
\item [{\em (c)}] $G$ is $K_4$-minor-free.
\end{enumerate}
\end{Corollary}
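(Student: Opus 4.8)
The plan is to prove the existence of a degree-$2$ generator first, and then deduce the equivalence of the three conditions using results already established. For the main claim, I would start by exploiting the hypothesis that $G$ is connected but not complete: this means there exist two vertices $a,b \in V$ with $\{a,b\} \notin E$, and by connectedness there is a path between them, so in particular one can find three vertices $x,y,z$ with $\{x,y\},\{y,z\} \in E$ but $\{x,z\} \notin E$ --- that is, an induced path $P_3$. The key observation is that $P_3$ is a neighborhood-minor of $G$: taking $W = \{x,y,z\}$ and $W' = V \setminus W$, one checks that $y$ is the central vertex with $W \cap N_G(W') \subseteq N_{P_3}[y]$ need not hold in general, so instead I expect the correct route is to realize $P_3$ (or some $P_3$) as a \emph{combinatorial retract} of $G$ by combining edge contractions (to collapse $W'$ appropriately) with the neighborhood-minor construction. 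Once $P_3$ is exhibited as a combinatorial retract, Corollary~\ref{greatest generator}(c) guarantees that the number of minimal generators of $I_G$ in degree $2$ is at least that of $I_{P_3}$, and since $I_{P_3}$ is well known to have a (single) degree-$2$ generator (its cut polytope is a square, which is not a simplex, forcing a quadratic relation), it follows that $I_G$ has a degree-$2$ generator.

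For the equivalence, the implication (c) $\Rightarrow$ (b) is precisely Engström's theorem (the resolved first conjecture of Sturmfels--Sullivant, cited in the introduction as \cite{E}): $K_4$-minor-free graphs are exactly the series-parallel graphs, and for these $I_G$ is generated in degree at most $2$; combined with the first part of the statement (existence of a degree-$2$ generator when $G$ is non-complete), this upgrades ``degree at most $2$'' to ``degree exactly $2$,'' giving generation in the single degree $2$. The implication (b) $\Rightarrow$ (a) is trivial. For (a) $\Rightarrow$ (c), I would argue by contraposition: if $G$ has a $K_4$-minor, then by Remark~\ref{combin retract K_n} the graph $K_4$ is a combinatorial retract of $G$, so by Corollary~\ref{greatest generator}(d) we have $\mu(I_G) \geq \mu(I_{K_4})$. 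Since $I_{K_4}$ has a minimal generator in degree $4$ (this is the content of Example~\ref{K_4} referenced in the paper, and is the classical fact distinguishing $K_4$ among small graphs), we get $\mu(I_G) \geq 4 > 2$. But the first part of the corollary shows $I_G$ already has a degree-$2$ generator, so $I_G$ would have minimal generators in at least two distinct degrees, contradicting generation in a single degree.

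The main obstacle I anticipate is the careful verification of the first assertion --- that every connected non-complete graph admits $P_3$ as a \emph{combinatorial} retract --- rather than merely as an abstract minor. The subtlety is that neighborhood-minors require the rather rigid condition $W \cap N_G(W') \subseteq N_H[v]$ for a single vertex $v$, which a naive choice of induced $P_3$ may fail; one likely needs to first contract edges to shrink the ``outside'' $W'$ down to something controlled, and only then take the neighborhood-minor. I would handle this by choosing $x,y,z$ with some extremality (for instance, taking $\{x,z\}$ to be a non-edge with $x,z$ at distance exactly $2$, so $y$ is a common neighbor), and by contracting all vertices outside a suitable neighborhood into the frame of the path before invoking Definition~\ref{neighborhood}. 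Everything downstream of this first assertion is bookkeeping that cites Engström's theorem, Remark~\ref{combin retract K_n}, and the explicit generator degrees of $I_{P_3}$ and $I_{K_4}$ from the paper's examples, so the real work is concentrated in producing the degree-$2$ generator cleanly.
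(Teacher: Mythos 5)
The one step you leave unresolved --- exhibiting $P_3$ as a combinatorial retract of $G$ --- is exactly where the content of the paper's proof lies, and the obstacle you anticipate there is not real. If $x,y,z$ induce a path with middle vertex $y$ (such a triple exists in any connected non-complete graph, e.g.\ by taking three consecutive vertices on a shortest path between a non-adjacent pair), set $W=\{x,y,z\}$, $W'=V\setminus W$ and $H=G_W\iso P_3$. Then $N_{H}[y]=\{x,y,z\}=W$, so the condition $W\cap N_G(W')\subseteq N_{H}[y]$ of Definition~\ref{neighborhood} holds \emph{vacuously}: the left-hand side is contained in $W$ no matter what $W'$ is. Hence every induced $P_3$ is already a neighborhood-minor of $G$ (when $|V|>3$; when $|V|=3$ the graph is $P_3$ itself and there is nothing to prove), and no preliminary edge contractions are needed. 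Your assertion that ``a naive choice of induced $P_3$ may fail'' the neighborhood-minor condition is therefore false, and the workaround you sketch --- contracting the outside of the path first --- is both unnecessary and never actually carried out, so as written your proposal does not establish the first assertion of the corollary. Once this is repaired, Theorem~\ref{golden1} and Corollary~\ref{greatest generator}(c) applied to the retract $P_3$, together with the fact that $I_{P_3}=\langle q_{\emptyset}q_{2}-q_{1}q_{12}\rangle$ is generated in degree $2$, give the degree-$2$ minimal generator of $I_G$; this is precisely the paper's argument.

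The rest of your proposal is sound and only cosmetically different from the paper's. The paper quotes the equivalence of (b) and (c) directly from Engstr\"om and then notes that (a) and (b) are equivalent because a degree-$2$ minimal generator is always present; you instead prove (a) $\Rightarrow$ (c) by contraposition, using Remark~\ref{combin retract K_n} and $\mu(I_{K_4})=4$ to produce a minimal generator of degree at least $4$ alongside the quadric. Both routes rest on the same external inputs, and your reorganization is logically correct; the only genuine defect is the unproved (and misdiagnosed) first step above.
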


\begin{proof}
Since $G$ is non-complete and connected, it follows that it has an induced subgraph isomorphic to $P_3$, say on the set of vertices $\{u,v,w\}$. Without loss of generality, we denote that induced subgraph by $P_3$, and also assume that $\deg_{P_3}(v)=2$. Therefore, it follows from Corollary~\ref{glue} that the induced subgraph $P_3$ is indeed a neighborhood-minor of $G$. Thus, by Corollary~\ref{greatest generator}~(c), we deduce that $I_G$ has a minimal generator of degree $2$, because $I_{P_3}$ is generated in degree $2$ by \cite[Corollary~4.3]{NP} (see also \cite[Example~2.3]{SS}).
Hence, (a) and (b) are immediately equivalent. On the other hand, (b) and (c) are also equivalent, as it was already shown in \cite[Corollary~2.8]{E}.
\end{proof}

\begin{Remark}\label{single degree K_n}
{\em Using computer algebra systems one can extend the characterization in Corollary~\ref{single degree} as follows. There we assumed that $G$ is not a complete graph. If $G=K_2$ or $G=K_3$, then $I_G=\langle 0 \rangle$, by Proposition~\ref{regularity0}.
If $G=K_4$, we see in Example~\ref{K_4} that its cut ideal is a principal ideal generated in degree $4$. If $G=K_5$, then computations show that the cut ideal is generated in degrees $4$ and $6$. Since $K_5$ is a neighborhood-minor of $K_n$ for all $n\geq 6$, it follows by Proposition~\ref{single degree}~(c) that $I_{K_n}$ is not generated in a single degree for all $n\geq 5$. So, together with Corollary~\ref{single degree}, one can get the complete characterization of all connected graphs whose cut ideals are generated in a single degree. More precisely, this is the case if and only if $G=K_4$ or $G$ is $K_4$-minor-free. }
\end{Remark}

Now we consider the disconnected case. First we need to recall a result of \cite{NP} which deals with the cut algebra of disconnected graphs.

\begin{Proposition}
{{\em {(}}\cite[Proposition~5.2]{NP}{\em {)}}}\label{disconnected}
 Let $G=G_1\sqcup G_2$ where $G_i=(V_i,E_i)$ is a graph with $|V_i|=n_i$ for $i=1,2$. Then there is an injective homogeneous $\mathbb{K}$-algebra homomorphism $\alpha \colon S_{G_1\sharp_{K_1} G_2}\rightarrow S_{G}$ mapping variables of $S_{G_1\sharp_{K_1} G_2}$ to the variables of $S_{G}$ such that $I_G=\alpha(I_{G_1\sharp_{K_1} G_2})+L$,
where $L$ is an ideal minimally generated by $2^{n_1+n_2-2}$ linear forms. In particular, $\mathbb{K}[G]\iso \mathbb{K}[G_1\sharp_{K_1} G_2]$ where $K_1$ can be any vertex of $G_1$ and $G_2$, respectively.
\end{Proposition}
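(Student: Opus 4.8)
The plan is to build $\alpha$ from an explicit vertex bijection and to show that, modulo a space of linear binomials recording the disconnectedness, the two cut ideals agree. Write $V_1=\{v_1,\dots,v_{n_1}\}$ and $V_2=\{w_1,\dots,w_{n_2}\}$, and form $G':=G_1\sharp_{K_1}G_2$ by identifying $v_1$ with $w_1$ into a single glued vertex $z$; thus $V(G')=(V_1\setminus\{v_1\})\cup(V_2\setminus\{w_1\})\cup\{z\}$ has $n_1+n_2-1$ vertices and $E(G')=E_1\cup E_2=E$. I represent every partition of $V$ by the unique subset $A$ with $v_1\in A$, and every partition of $V(G')$ by the unique subset $P$ with $z\in P$. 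First I would define $\alpha$ on variables by $\alpha(q_P)=q_A$ with $A:=(P\setminus\{z\})\cup\{v_1,w_1\}$; this maps $S_{G'}$ bijectively onto the subalgebra $\KK[q_A: v_1,w_1\in A]\subseteq S_G$, sends variables to variables, and is homogeneous for the standard grading, so injectivity and homogeneity are immediate.

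The linear forms arise because, as $G$ has no edges between $V_1$ and $V_2$, the cut set of a partition depends only on the induced \emph{unordered} partitions of $V_1$ and of $V_2$. Concretely, for a partition represented by $A$ with $v_1\in A$ but $w_1\notin A$, set $A':=(A\cap V_1)\cup(V_2\setminus(A\cap V_2))$; then $A'\cap V_1=A\cap V_1$ while $A'\cap V_2$ is the $V_2$-complement of $A\cap V_2$, so $\mathrm{Cut}(A)=\mathrm{Cut}(A')$ and hence $u_A=u_{A'}$, i.e. $q_A-q_{A'}\in I_G$. Since $v_1,w_1\in A'$ the variable $q_{A'}$ lies in the image of $\alpha$ while $q_A$ does not, and $A\mapsto A'$ is a bijection between the $2^{n_1+n_2-2}$ partitions with $v_1\in A,\ w_1\notin A$ and those with $v_1,w_1\in A$. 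Let $L$ be the ideal generated by these $2^{n_1+n_2-2}$ binomials $q_A-q_{A'}$; as they involve pairwise distinct out-of-image variables $q_A$, they are $\KK$-linearly independent and form a minimal generating set.

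The heart of the argument is the identity $I_G=\alpha(I_{G'})+L$, which I would prove via an edge-wise cut-membership correspondence. For $A$ in the image of $\alpha$ with preimage $P$, the partition of $V_1$ induced by $A$ coincides with that induced by $P$ under $z=v_1$, and the partition of $V_2$ induced by $A$ coincides with that induced by $P$ under $z=w_1$; since $E=E_1\cup E_2$ and cut membership of an edge depends only on the partition of its own component, this yields $\mathrm{Cut}_G(A)=\mathrm{Cut}_{G'}(P)$ as subsets of $E$. Using the $\ZZ^{2|E|}$-multigrading, a pure binomial $\prod_i q_{P_i}-\prod_i q_{P_i'}$ lies in $I_{G'}$ iff for every edge the number of factors cutting it agrees on both sides; by the correspondence the identical count condition holds in $G$, giving $\alpha(I_{G'})\subseteq I_G$ and, conversely, $\alpha(f)\in I_G\Rightarrow f\in I_{G'}$. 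Together with $L\subseteq I_G$ this is ``$\supseteq$''. For ``$\subseteq$'', take a pure binomial $g=m_1-m_2\in I_G$ (these span $I_G$): reducing each variable $q_A$ occurring in $g$ to its image representative via $q_A\equiv q_{A'}\pmod L$ gives $g\equiv\alpha(\tilde g)\pmod L$ for a pure binomial $\tilde g\in S_{G'}$; as $L\subseteq I_G$ we get $\alpha(\tilde g)\in I_G$, hence $\tilde g\in I_{G'}$ by the correspondence, and so $g\in\alpha(I_{G'})+L$.

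Finally the isomorphism follows formally: $S_G/L$ is the polynomial ring on the image variables, identified with $S_{G'}$ via $\alpha$, and quotienting further by the image of $I_{G'}$ gives $S_G/I_G=S_G/(\alpha(I_{G'})+L)\iso S_{G'}/I_{G'}=\KK[G_1\sharp_{K_1}G_2]$; since $v_1\in V_1$ and $w_1\in V_2$ were arbitrary, $K_1$ may be any vertex of $G_1$ and of $G_2$. The main obstacle is the careful bookkeeping of the variable lift together with the simultaneous verification that $u_A=u_{A'}$ and that $\mathrm{Cut}_G(A)=\mathrm{Cut}_{G'}(P)$ hold; once these are established, both the ideal decomposition and the isomorphism are routine.
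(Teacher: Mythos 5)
The paper does not prove this proposition; it is imported verbatim from \cite[Proposition~5.2]{NP}, so there is no in-paper argument to compare against. Your reconstruction is correct and complete: the identification of the out-of-image variables $q_A$ (those with $v_1\in A$, $w_1\notin A$) with their partners $q_{A'}$ via $V_2$-complementation correctly produces the $2^{n_1+n_2-2}$ independent linear binomials, the edge-wise identity $\mathrm{Cut}_G(A)=\mathrm{Cut}_{G'}(P)$ gives both inclusions $\alpha(I_{G'})\subseteq I_G$ and the converse lifting of pure binomials, and the reduction of an arbitrary pure binomial of $I_G$ modulo $L$ closes the argument. The only point worth tightening is the stated membership criterion for a pure binomial in a cut ideal: matching cut-counts for every edge must be supplemented by equality of total degrees (equivalently, matching $t$-exponents as well), though this is automatic everywhere you invoke it since the binomials you compare are always homogeneous of the same degree.
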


In the following proposition, we determine when the cut ideal of a graph is generated only by linear forms:

\begin{Proposition}\label{single degree disconnected}
Let $G=(V,E)$ be a disconnected graph with $|E|\geq 1$. Then the following statements are equivalent:
\begin{enumerate}
\item [{\em (a)}] $I_G$ is generated in a single degree;
\item [{\em (b)}] $I_G$ is generated by linear forms;
\item [{\em (c)}] $G=K_2\sqcup(\sqcup_{i=1}^t K_1)$ or $G=K_3\sqcup(\sqcup_{i=1}^t K_1)$ for some $t\geq 1$.
\end{enumerate}
\end{Proposition}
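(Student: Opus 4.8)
The plan is to run through the implications by first noticing that, in the disconnected case, \emph{generated in a single degree} can only mean \emph{generated by linear forms}, and then tying condition (c) to regularity of the cut algebra through the reduction to a connected graph furnished by Proposition~\ref{disconnected}. The easy equivalence is (a) $\Leftrightarrow$ (b), the implication (c) $\Rightarrow$ (b) is a short induction, and the real content is (b) $\Rightarrow$ (c).

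First I would settle (a) $\Leftrightarrow$ (b). Since $G$ is disconnected with $|E|\geq 1$, Proposition~\ref{linear forms} gives $(I_G)_1\neq 0$. The implication (b) $\Rightarrow$ (a) is immediate. For (a) $\Rightarrow$ (b), suppose $I_G$ is generated in a single degree $d$. A nonzero linear form in $I_G$ cannot lie in $\mathfrak{m}I_G$ (whose elements have degree at least $d+1$ once $d\geq 1$), so by graded Nakayama it belongs to every minimal generating system; hence $I_G$ has a minimal generator of degree $1$, forcing $d=1$. Thus $I_G$ is generated by linear forms.

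Next I would prove (c) $\Rightarrow$ (b) by induction on the number $t$ of isolated vertices, using Proposition~\ref{disconnected}. Writing $G=C\sqcup(\sqcup_{i=1}^t K_1)$ with $C\in\{K_2,K_3\}$, I peel off one isolated vertex by taking $G_1=C\sqcup(\sqcup_{i=1}^{t-1}K_1)$ and $G_2=K_1$; since $G_1\sharp_{K_1}K_1=G_1$, Proposition~\ref{disconnected} yields $I_G=\alpha(I_{G_1})+L$, where $\alpha$ maps variables to variables and $L$ is generated by linear forms. As $\alpha$ preserves degrees, the inductive hypothesis (with base case $I_C=0$, by Proposition~\ref{regularity0}) shows that $I_G$ is generated by linear forms.

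The main work, and the step I expect to be the chief obstacle, is (b) $\Rightarrow$ (c). Iterating Proposition~\ref{disconnected}, I would absorb the isolated vertices and glue the nontrivial components $C_1,\dots,C_m$ of $G$ successively at single vertices, obtaining a connected graph $D$ with $\KK[G]\iso\KK[D]$ and $|E(D)|=|E(G)|\geq 1$. Condition (b) says that $\KK[G]=S_G/I_G$ is a polynomial ring, hence regular, so $\KK[D]$ is regular as well. Because $D$ is connected, Proposition~\ref{linear forms} gives $(I_D)_1=0$; since a standard graded algebra can be regular only if its defining ideal is generated by linear forms, the absence of linear forms forces $I_D=0$, and Proposition~\ref{regularity0} then gives $D=K_2$ or $D=K_3$. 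Finally, both $K_2$ and $K_3$ are $2$-connected, whereas gluing two or more nontrivial components at a single vertex always produces a cut vertex; hence $m=1$, so $D$ is the unique nontrivial component $C$ and $C\in\{K_2,K_3\}$. As $G$ is disconnected it must retain at least one isolated vertex, which yields exactly the graphs in (c). The delicate points to treat carefully are the passage from ``$\KK[D]$ regular'' to ``$I_D=0$'' for the connected graph $D$ (where the vanishing of $(I_D)_1$ is essential), and the bookkeeping ensuring that the reduction to $D$ preserves both the edge count and the presence of a cut vertex.
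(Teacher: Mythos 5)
Your proposal is correct and follows essentially the same route as the paper: both reduce to the connected graph obtained by gluing the components at single vertices via Proposition~\ref{disconnected}, observe that (b) makes the cut algebra a polynomial ring, and then combine Propositions~\ref{linear forms} and~\ref{regularity0} to force the glued graph to be $K_2$ or $K_3$. The only (harmless) difference is in the endgame: the paper concludes by induction on the number of connected components, whereas you glue everything at once and rule out two or more nontrivial components by the cut-vertex observation.
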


\begin{proof}
Here we use the notation in Proposition~\ref{disconnected}. By applying repeatedly the same proposition, we get
\[
\alpha(I_{K_i\sqcup (\sqcup_{i=1}^{t-1} K_1)\sharp_{K_1} K_1} )=\alpha(I_{K_i})=\langle 0 \rangle
\] for $i=2,3$. So by Proposition~\ref{disconnected}, (c) implies (b). Obviously, (b) implies (a). Thus, it remains to prove that (a) implies (c). Since by Proposition~\ref{disconnected} the ideal $I_G$ has always linear forms among the generators, being generated in a single degree, clearly means that $I_G$ is generated only by linear forms. Let $G=\sqcup_{i=1}^cG_i$ where $c\geq 1$ and $G_i$ is a connected component of $G$ for $i=1,\ldots,c$.
If $c=2$, then $G_1\sharp_{K_1} G_2$ is connected. So $\alpha(I_{G_1\sharp_{K_1} G_2})=\langle 0 \rangle$ by Proposition~\ref{linear forms}. Since $\alpha$ is injective, it follows that $I_{G_1\sharp_{K_1} G_2}=\langle 0 \rangle$. Thus Proposition~\ref{regularity0} implies that $G_1\sharp_{K_1} G_2=K_2$ or $G_1\sharp_{K_1} G_2=K_3$, which is the case if and only if $G_1=K_i$ for $i\in\{2,3\}$ and $G_2=K_1$, or vice versa. Then by induction on the number of connected component of $G$, namely $c$, it follows that (a) implies (c), as desired.
\end{proof}

In the sequel, first we study the property of being a complete intersection (which is preserved under algebra retracts) for cut algebras. The obtained results will then be applied to study the resolution of cut ideals later.

There is a conjecture in \cite{SS} on the characterization of Cohen-Macaulay cut algebras which has been studied in some special cases, (see, e.g., \cite{NP} for ring graphs). In \cite{O2} those graphs whose cut algebras are normal and Gorenstein were characterized. In the following, we classify cut algebras with respect to being a complete intersection.

\begin{Theorem}\label{complete intersection}
Let $G=(V,E)$ be a graph with $|E|\geq 1$, which has no isolated vertices. Then the following statements are equivalent:
\begin{enumerate}
\item [{\em (a)}] $\KK[G]$ is a complete intersection;
\item [{\em (b)}] $G$ is one of the following graphs: $K_2, K_3, P_3, 2K_2, C_4, K_4\setminus e, K_4$.
\end{enumerate}
\end{Theorem}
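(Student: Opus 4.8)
The plan is to prove both implications, with the interesting direction being (a) $\Rightarrow$ (b). First I would observe that for a toric ideal in a polynomial ring, being a complete intersection means $\height I_G$ equals the number of minimal generators $\mu_0(I_G)$ (the count, not the degree). By the height formula~(\ref{height}), $\height I_G = 2^{n-1}-|E|-1$ where $n=|V|$. The strategy is to exploit that the complete intersection property is preserved under algebra retracts (stated in Section~\ref{retract}), so that if $\KK[G]$ is a complete intersection and $G'$ is a combinatorial retract of $G$, then $\KK[G']$ is also a complete intersection. This gives a powerful obstruction mechanism: I would identify a small list of ``forbidden'' combinatorial retracts whose cut algebras are \emph{not} complete intersections, and conclude that any $G$ whose cut algebra is a complete intersection can contain none of them as combinatorial retracts.

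\textbf{Direction (b) $\Rightarrow$ (a).} This is a finite verification. For each of the seven graphs listed, I would either compute the cut ideal explicitly or invoke earlier results. For $K_2, K_3$ we have $I_G = \langle 0\rangle$ by Proposition~\ref{regularity0}, which is trivially a complete intersection. For $P_3$, the ideal is a single binomial of degree~$2$ (a principal ideal, hence a complete intersection). For $2K_2$, Proposition~\ref{disconnected} reduces the cut algebra to that of $K_2\sharp_{K_1}K_2 = P_3$ plus linear forms, and one checks the resulting ideal is still generated by a regular sequence. For $K_4$, Example~\ref{K_4} (referenced in the excerpt) shows $I_{K_4}$ is principal of degree~$4$. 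The cases $C_4$ and $K_4\setminus e$ require checking that $\height I_G$ equals the number of minimal generators; here I would compute $\height I_{C_4} = 2^3-4-1 = 3$ and verify $I_{C_4}$ is minimally generated by exactly three binomials forming a regular sequence, and similarly $\height I_{K_4\setminus e} = 2^3 - 5 - 1 = 2$ with two generators.

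\textbf{Direction (a) $\Rightarrow$ (b).} This is the main content. The key idea is that complete intersections are quite restrictive, so $G$ cannot be too large or complicated. I would argue by bounding $n=|V|$ and the structure of $G$ simultaneously. Since a complete intersection ideal in $S_G$ has $\mu_0(I_G) = \height I_G = 2^{n-1}-|E|-1$ minimal generators, and since combinatorial retracts can only decrease the number of minimal generators (Corollary~\ref{greatest generator}(c)) while preserving the complete intersection property, the plan is to locate specific graphs that are \emph{not} complete intersections and show they arise as combinatorial retracts of any graph outside the list in~(b). The natural candidates for forbidden retracts are $K_5$ (not even Cohen-Macaulay, hence not a complete intersection, by the remark after Proposition~\ref{projective dimension}), and small graphs like $C_5$, $K_4\sharp_{K_2} K_3$ or the graph $G_6$ from the excerpt, whose cut algebras can be checked by computation to fail the complete intersection condition. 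I expect the argument to proceed by first ruling out $n\geq 6$ (or $n\geq 5$ with $G\neq$ a few exceptions) using the projective-dimension lower bounds of Proposition~\ref{projective dimension}, since a complete intersection forces $\projdim_{S_G} S_G/I_G = \height I_G$, but parts~(c) and~(d) of that proposition give strictly larger projective dimension for $n\geq 6$ unless $I_G$ has very few generators — which the height formula makes impossible once $|E|$ is bounded. Then for $n\leq 5$ I would enumerate the relevant connected and disconnected graphs, using Corollary~\ref{greatest generator} to eliminate any graph having $K_4$ together with extra edges, $C_5$, or other non-complete-intersection minors as combinatorial retracts.

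\textbf{Main obstacle.} The hardest part will be the finite-but-delicate case analysis for $n=5$ (and handling disconnected graphs via Proposition~\ref{disconnected}), where I must pin down exactly which graphs survive all the retract obstructions and verify that the survivors are precisely those in the list. The cleanest route is probably to prove that any graph not in the list~(b) has one of a small set of ``minimal non-complete-intersection'' graphs as a combinatorial retract; establishing the completeness of that forbidden set — that is, showing the list of forbidden retracts really captures every failure — is the crux, and it likely rests on explicit computations of cut ideals for a handful of small graphs (which I would defer to Section~\ref{Examples} and cite), combined with the monotonicity of Betti numbers under combinatorial retraction from Corollary~\ref{greatest generator}.
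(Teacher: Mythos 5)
Your overall architecture --- verify (b) $\Rightarrow$ (a) graph by graph, and for (a) $\Rightarrow$ (b) exploit that the complete intersection property is preserved by algebra retracts in order to propagate obstructions from small non-complete-intersection combinatorial retracts --- is exactly the paper's strategy, and your treatment of (b) $\Rightarrow$ (a) matches the paper's (height versus number of minimal generators for $P_3$, $C_4$, $K_4\setminus e$; Proposition~\ref{regularity0} and Example~\ref{K_4} for the complete graphs; Proposition~\ref{disconnected} for $2K_2$). However, there is a concrete gap in your proposed elimination of the case $n\geq 6$. You want a contradiction from Proposition~\ref{projective dimension}~(c),(d), asserting that they give ``strictly larger projective dimension'' than a complete intersection allows. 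They do not: the bounds $p\geq 16$ (for $n=6$) and $p\geq 42$ (for $n\geq 7$) are obtained precisely from $p\geq \height I_G=2^{n-1}-|E|-1\geq 2^{n-1}-\binom{n}{2}-1$, so they coincide with the height lower bound, and a complete intersection satisfies $p=\height I_G$ exactly; nothing is contradicted. The paper's reduction is different and simpler: any graph on more than five vertices admits, after a sequence of edge contractions, a five-vertex graph as a combinatorial retract, hence it suffices to show that no five-vertex graph has a complete intersection cut algebra.

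For the remaining $n=5$ analysis your plan is workable in outline but underdetermined: you must exhibit, for each of the roughly two dozen five-vertex graphs without isolated vertices, a forbidden retract, and you have not pinned down the forbidden set. The paper makes this tractable by first observing that a complete intersection is Gorenstein and invoking Ohsugi's characterization of normal Gorenstein cut algebras, so that only the Gorenstein five-vertex graphs need examining; for each of those one explicitly produces a four-vertex combinatorial retract ($P_4$, $K_{1,3}$, or $K_2\sharp_{K_1}K_3$) whose cut algebra is not a complete intersection, except for $K_5\setminus e$, which is handled by comparing its height ($6$) with its number of minimal generators ($35$), and $K_5$, which is not Cohen--Macaulay. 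Without this Gorenstein filter (or an equivalent device) your case analysis would rest on substantially more raw computation. So the skeleton is right, but the $n\geq 6$ step as you propose it fails and must be replaced by the contraction-to-five-vertices argument, and the $n=5$ step needs the Gorenstein reduction to be carried out cleanly.
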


\begin{proof}
Let $|V|=n$. First, we characterize all graphs $G$ with no isolated vertices, $|E|\geq 1$ and $n\leq 4$. (see Table~\ref{table:1}), for which $\KK[G]$ is a complete intersection.

By Proposition~\ref{regularity0} and Example~\ref{K_4}, $\KK[K_i]$ is a complete intersection for $i=2,3,4$.

If $G\neq K_2$ is a tree, namely $P_3, P_4, K_{1,3}$, then by comparing \eqref{height} and the formula given in \cite[Corollary~4.2]{NP} for the number of minimal generators of $I_G$, it follows that in this case, $\KK[G]$ is a complete intersection if and only if $G=P_3$. Moreover, by Proposition~\ref{disconnected}, the cut algebras of $P_3$ and $2K_2$ are isomorphic which implies that $\KK[2K_2]$ is also a complete intersection.

If $G=K_2\sharp_{K_1} K_3$, then by a criteria given in \cite[Theorem~3.4]{O2} for Gorenstein normal cut algebras, it follows that $\KK[G]$ is not Gorenstein, and hence it is not a complete intersection. Note that by \cite[Example~3.7]{O1} $\KK[G]$ is normal in this case.

Comparing the formula given in \cite[Proposition~3.7]{NP} for the number of minimal generators of the cut ideal of a cycle with the height of $I_{C_4}$, it follows that $\KK[C_4]$ is a complete intersection.

Finally, by Example~\ref{K_4} and Example~\ref{another small}~(d), $\KK[K_4]$ and $\KK[K_4\setminus e]$ are complete intersections.

In particular, this characterization already proves the implication~(b)\implies (a).

Next, we show that if $n=5$, then $\KK[G]$ is not a complete intersection which implies that the same result holds for $n>5$. In fact, by suitable  edge contractions of a graph with more than five vertices, a graph with five vertices is obtained, and hence the desired conclusion follows in this case, since we get indeed an algebra retract.

It remains to deal with the case $n=5$. We consider all graphs in Table~\ref{table:1}.
Note that by Example~\ref{K_5}, $\KK[K_5]$ is not Cohen-Macaulay, and hence not a complete intersection. Moreover, it is enough to show that  cut algebras of the following graphs, which are the only Gorenstein ones by \cite[Theorem~3.4]{O2}, are not complete intersections. Indeed, in each case, we obtain a combinatorial retract with four vertices whose cut algebra is not a complete intersection. Hence it follows that $\KK[G]$ is not a complete intersection as well.
\begin{itemize}
\item $P_5$, $K_{1,4}$ and $G_1$: in these cases, by contracting an edge, we obtain either $P_4$ or $K_{1,3}$.
\item $K_2\sharp_{K_1} C_4$: by contracting an edge of the induced $C_4$, we get $K_2\sharp_{K_1} K_3$.
\item $K_3\sharp_{K_1} K_3$: by contracting an edge, we obtain again $K_2\sharp_{K_1} K_3$.
\item $C_4\sharp_{P_3} C_4$: in this case, $K_{1,3}$ is neighborhood-minor. Note that in this case, contracting any of the edges of $C_4\sharp_{P_3} C_4$ yields graphs with complete intersection cut algebras.
\item $G_7$: by contracting the common edge in the two triangles, one gets $K_2\sharp_{K_1} K_3$.
\item $G_8$: by contracting the common edge between the three triangles, one obtains $K_{1,3}$.
\item $K_3 \sharp_{K_2} K_4$: by contracting the common edge between the induced $K_3$ and $K_4$, we get $K_2\sharp_{K_1} K_3$.
\item $K_5\setminus e$: in this case, by~\eqref{height}, $\height I_{K_5\setminus e}=6$, while the number of minimal generators of $I_{K_5\setminus e}$ is $35$ (see Example~\ref{another small}).
\end{itemize}
\end{proof}

Note that a disjoint union of any of the graphs in part~(b) of Theorem~\ref{complete intersection} with some isolated vertices, preserves the property of being a complete intersection, by Proposition~\ref{disconnected}.

In the next theorem, we determine all graphs whose cut ideals have  linear resolutions.

\begin{Theorem}\label{linear resolution}
Let $G=(V,E)$ be a connected graph with $|E|\geq 1$. Then the following statements are equivalent:
\begin{enumerate}
\item [{\em (a)}] $I_G$ has a $d$-linear resolution for some $d\geq 2$;
\item [{\em (b)}] $G=P_3$ or $G=K_2\sharp_{K_1} K_3$ or $G=K_4$.
\end{enumerate}
\end{Theorem}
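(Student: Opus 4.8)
The plan is to prove both implications by combining the monotonicity of Betti numbers and regularity under combinatorial retracts (Corollary~\ref{greatest generator}, via Proposition~\ref{Betti}) with a handful of explicit computations, after first reducing to the case that $I_G$ is generated in a single degree. Indeed, a $d$-linear resolution forces $I_G$ to be minimally generated in the single degree $d$ (so $\beta_{0,j}^{S_G}(I_G)=0$ for $j\neq d$); by Corollary~\ref{single degree} and Remark~\ref{single degree K_n} this already restricts $G$ to $G=K_4$ (with $d=4$) or to a connected non-complete $K_4$-minor-free graph (with $d=2$), the cases $G=K_2,K_3$ being excluded since there $I_G=\langle 0\rangle$ has no generators. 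Since $I_{K_4}$ is principal of degree $4$ (Example~\ref{K_4}), it trivially has a $4$-linear resolution, which settles $K_4$ in both directions. It therefore remains to show, for connected non-complete $K_4$-minor-free $G$, that $I_G$ has a (necessarily $2$-)linear resolution if and only if $G=P_3$ or $G=K_2\sharp_{K_1}K_3$.

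For the direction (b) $\Rightarrow$ (a) I would identify the relevant cut algebras with Segre products. Using Lemma~\ref{cut polytopal algebra}, $\KK[P_3]$ is the polytopal algebra of the square $[0,1]^2$ and $\KK[K_2\sharp_{K_1}K_3]$ is the polytopal algebra of $\Delta^3\times[0,1]$ (the pendant edge of the paw can be cut freely, independently of the triangle), so these are the coordinate rings of the Segre embeddings of $\PP^1\times\PP^1$ and $\PP^3\times\PP^1$, respectively. In each case the ideal is the ideal of $2\times2$ minors of a $2\times n$ matrix of variables (with $n=2$, resp.\ $n=4$), which is resolved by the linear Eagon--Northcott complex; hence $\reg_{S_{P_3}} I_{P_3}=\reg_{S_G} I_{K_2\sharp_{K_1}K_3}=2$ and both ideals have a $2$-linear resolution. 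Alternatively $I_{P_3}$ is principal of degree $2$, being a height-one complete intersection by Theorem~\ref{complete intersection}.

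For (a) $\Rightarrow$ (b) I would treat the tree case and the non-tree case separately. If $G$ is a tree with $m$ edges then the cuts are independent over the edges, so $\mathrm{Cut}^\square(G)=[0,1]^m$ and $\KK[G]$ is the polytopal algebra of the $m$-cube; its $h$-polynomial is the Eulerian polynomial of degree $m-1$, and since the unimodular cube algebra is Cohen--Macaulay one gets $\reg_{S_G} I_G=m$. Thus a tree has a $2$-linear resolution exactly when $m=2$, i.e.\ $G=P_3$. If $G$ is not a tree it has a cycle and hence a $2$-connected block $B$ with at least three vertices; peeling the remaining parts across cut vertices realizes every block as a neighborhood-minor, so $B$ is a combinatorial retract of $G$ and $\reg_{S_G} I_G\geq\reg_{S_B} I_B$ by Corollary~\ref{greatest generator}. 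If some block has at least four vertices then, being $2$-connected and $K_4$-minor-free, it contracts (keeping $2$-connectivity) onto a $4$-vertex graph, necessarily $C_4$ or $K_4\setminus e$; a Koszul-complex computation for these complete intersections (of $3$, resp.\ $2$, quadrics, by Theorem~\ref{complete intersection}) gives $\reg I_{C_4}=4$ and $\reg I_{K_4\setminus e}=3$, so $\reg_{S_G} I_G\geq3$. If instead every block is an edge or a triangle, then either $G$ is the paw, or $G$ admits an induced $P_4$ or $K_{1,3}$ that is a neighborhood-minor (a tree retract with at least three edges, hence regularity $\geq3$), or $G$ retracts onto the bowtie $K_3\sharp_{K_1}K_3$. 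In all non-paw cases $\reg_{S_G} I_G\geq3>2$, leaving only $G=P_3$ and $G=K_2\sharp_{K_1}K_3$.

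The main obstacle is the last, structural step: turning ``$G$ is a non-tree connected $K_4$-minor-free graph other than $K_3$ and the paw'' into the existence of a bad combinatorial retract. The subtle point is the bowtie $K_3\sharp_{K_1}K_3$: it has no induced $P_4$ and no induced $K_{1,3}$, and all of its proper combinatorial retracts (the paw, $P_3$, $K_3$) have linear or trivial cut ideals, so its badness cannot be inherited from a smaller graph and must be established directly---e.g.\ via $\mathrm{Cut}^\square(K_3\sharp_{K_1}K_3)=\Delta^3\times\Delta^3$, giving $\KK[K_3\sharp_{K_1}K_3]=\KK[\PP^3\times\PP^3]$, whose defining ideal (the $2\times2$ minors of a $4\times4$ matrix) has regularity $4$. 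One must therefore pin down the finite list of minimal bad cores ($C_4$, $K_4\setminus e$, the bowtie, together with the bad trees $P_4$, $K_{1,3}$) and verify that every remaining graph combinatorially retracts onto one of them; the block decomposition above, combined with locating an induced $P_4$ or claw as a neighborhood-minor whenever two triangles are not glued at a single vertex, is what makes this work.
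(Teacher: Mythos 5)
Your overall strategy is sound and reaches the right answer, but it takes a genuinely different route from the paper in the hard direction (a)$\implies$(b). The paper reduces everything to graphs on at most $5$ vertices by edge contractions and then checks that finite list exhaustively, using Table~\ref{table:1}, Ohsugi's classification of normal Gorenstein cut algebras (Gorenstein plus non-principal rules out a linear resolution by self-duality of the resolution), and explicit contractions to the four-vertex cases; your proposal instead isolates a finite list of ``bad cores'' ($P_4$, $K_{1,3}$, $C_4$, $K_4\setminus e$, the bowtie) via the block decomposition of a $K_4$-minor-free graph and handles them conceptually. Your Segre identifications are correct and attractive: $\mathrm{Cut}^\square(G_1\sharp_{K_1}G_2)=\mathrm{Cut}^\square(G_1)\times\mathrm{Cut}^\square(G_2)$ does hold (cuts on a $1$-sum decompose independently), so $\KK[K_2\sharp_{K_1}K_3]$ and $\KK[K_3\sharp_{K_1}K_3]$ really are the coordinate rings of $\PP^3\times\PP^1$ and $\PP^3\times\PP^3$, and the Eagon--Northcott Betti numbers $(6,8,3)$ match Example~\ref{another small}~(c) exactly; this replaces the paper's \emph{Macaulay2} computations by structure, at the cost of proving the product formula and the unimodularity/$h$-polynomial facts you invoke for trees (which the paper simply cites from Nagel--Petrovi\'c).

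There are, however, two concrete gaps in your structural step. First, the claim that every $2$-connected $K_4$-minor-free block on at least four vertices ``contracts, keeping $2$-connectivity, onto $C_4$ or $K_4\setminus e$'' is asserted without proof; you need the (true but nontrivial) fact that a $2$-connected simple graph on $n\geq 5$ vertices always has an edge whose contraction remains simple and $2$-connected, or some substitute for it. Second, the assertion that an induced $P_4$ or $K_{1,3}$ arising from the block structure ``is a neighborhood-minor'' is false as a one-step statement: if the two triangles of $G$ sit at distance two in the block tree, the natural induced $P_4$ on $\{a,c_1,x,c_2\}$ has $W\cap N_G(W')$ potentially equal to all four of its vertices, and no closed neighborhood in $P_4$ contains four vertices, so Definition~\ref{neighborhood} fails. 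One must instead first peel leaf blocks and contract connecting paths (as iterated combinatorial retracts) down to a small graph such as $G_2$, $G_3$ or $G_4$ of Table~\ref{table:1}, inside which $P_4$ genuinely is a neighborhood-minor. You flag this last step as the main obstacle yourself, and it is fixable along these lines, but as written the proof is not complete; the paper's brute-force reduction to $n\leq 5$ avoids both issues entirely.
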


\begin{proof}
The implication (b)\implies~(a) follows from Examples~\ref{K_4}~and~\ref{another small}, namely, $I_{P_3}$ and $I_{K_2\sharp_{K_1} K_3}$ have $2$-linear resolution, and $I_{K_4}$ has a $4$-linear resolution.

It remains to prove (a)\implies (b). Assume that $I_G$ has a $d$-linear resolution with $d\geq 2$. Then $I_G$ is generated in a single degree $d\geq 2$. On the one hand, by Proposition~\ref{regularity0}, it follows that $G\neq K_2,K_3$. On the other hand, by Corollary~\ref{single degree} and Remark~\ref{single degree K_n}, it follows that $G$ is either $K_4$ or $K_4$-minor-free. Now, we need to show that for any $K_4$-minor-free graph which is not isomorphic to $P_3$ or $K_2\sharp_{K_1} K_3$, the ideal $I_G$ does not have a linear resolution. Let $|V|=n$. We may assume that  $n\geq 4$, since the only connected graphs with $n<4$ and $|E|\geq 1$, are $K_2$, $K_3$ and $P_3$.

First suppose that $n=4$. Then according to Table~\ref{table:1}, we only need to consider $G=P_4, K_{1,3}, C_4, K_4\setminus e$. In the first two cases, which are trees with three edges, by \cite[Corollary~4.3]{NP} it follows that their cut ideals are generated in degree $d=2$. By \cite[Proposition~4.4]{NP} we have that $\reg_{S_G}(I_G)=4$, which implies that $I_G$ does not have a $2$-linear resolution. In the cases where $G=C_4$ or $G=K_4\setminus e$, we know, by Theorem~\ref{complete intersection} and Proposition~\ref{projective dimension}, that $S_G/I_G$ is a complete intersection with projective dimension~$3$ or $2$. This implies that $\beta_{1,4}^{S_G}(I_G)\neq 0$, since in both cases $G$ is a ring graph, and hence by \cite[Theorem~6.2]{NP} $I_G$ is generated by quadrics. Thus, we deduce that $I_G$ does not have a linear resolution.
Therefore, the only graphs with $n\leq 4$ whose cut ideals have linear resolutions are $P_3$, $K_2\sharp_{K_1} K_3$ and $K_4$.

Next, we show that for $n=5$, there is no $K_4$-minor-free graph $G$ for which $I_G$ has a linear resolution. Then, this implies by Corollary~\ref{greatest generator}~(a) that the cut ideal of a $K_4$-minor-free graph with more than $5$ vertices does not have linear resolution as well.

Indeed, this follows from the fact that from any connected graph with $n$ vertices, one obtains, as a combinatorial retract, a connected graph with five vertices after a sequence of edge contractions. Let $n=5$. We consider the graphs with notation in Table~\ref{table:1}.

One the one hand, note that if $G=K_2\sharp_{K_1} K_4, K_3\sharp_{K_2} K_4, K_5\setminus e, K_5$, then it is easily observed that by one edge contraction, one gets $K_4$. So that they are not $K_4$-minor-free. On the other hand, note that since the cut algebra of all graphs with $n=5$, except $K_5$, are normal by \cite[Example~3.7]{O1}, we can apply the characterization of normal Gorenstein cut algebra of graphs given in \cite[Theorem~3.2]{O2}. Since by that theorem, the cut algebras of the graphs $P_5, K_{1,4}, G_1, K_2\sharp_{K_1} C_4, K_3\sharp_{K_1} K_3, C_4\sharp_{P_3} C_4, G_7$ and $G_8$ (from Table~\ref{table:1}) are Gorenstein, their cut ideals do not have a linear resolution, since they are not principal ideals in those cases.

In the remaining cases, by contraction of an edge of the graph we obtain a graph with four vertices whose cut ideal does not have a linear resolution as we showed before. More precisely:

If $G=G_2, G_3, G_4$, then by contracting an edge of their unique induced triangle, one obtains some trees on four vertices whose cut ideals do not have linear resolutions as we showed before.

If $G=C_5$, then we get $C_4$, where $I_{C_4}$ does not have a linear resolution.

If $G=G_5, G_6$, then by contracting an edge incident to the vertex of degree $1$, one obtains $K_4\setminus e$ whose cut ideal does not have a linear resolution.

Finally, if $G=K_3\sharp_{K_2} C_4, G_9, G_{10}$, then for example by contracting an edge of a triangle of the graph which does not belong to the unique induced $C_4$, one gets either $C_4$ or $K_4\setminus e$, whose cut ideals do not have linear resolutions.
\end{proof}

Proposition~\ref{regularity0}, Proposition~\ref{linear forms} and Proposition~\ref{single degree disconnected} gave the characterization of all graphs $G$ with $\reg_{S_G} S_G/I_G=0$. Example~\ref{K_4} shows in particular that $I_{K_4}$ has regularity~$4$. Thus, Theorem~\ref{linear resolution} together with Proposition~\ref{linear forms} proves:

\begin{Corollary}\label{regularity1}
Let $G=(V,E)$ be a connected graph with $|E|\geq 1$. Then the following statements are equivalent:
\begin{enumerate}
\item [{\em (a)}] $I_G$ has a $2$-linear resolution;
\item [{\em (b)}] $\reg_{S_G} I_G=2$;
\item [{\em (c)}] $G=P_3$ or $G=K_2\sharp_{K_1} K_3$.
\end{enumerate}
\end{Corollary}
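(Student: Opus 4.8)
The plan is to derive this corollary by combining Theorem~\ref{linear resolution} with the standard homological dictionary relating linear resolutions and regularity, and to isolate the one nonformal input, namely that connectedness forces the generators of $I_G$ into a single degree.

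First I would settle the equivalence of (a) and (b). The implication (a)~$\implies$~(b) is immediate from the definitions: if $I_G$ has a $2$-linear resolution then $\beta_{i,j}^{S_G}(I_G)\neq 0$ only for $j=i+2$, so $\reg_{S_G} I_G=2$. For the converse I would argue that $\reg_{S_G} I_G=2$ forces $I_G$ to be minimally generated in degree~$2$: on the one hand $\beta_{0,j}^{S_G}(I_G)=0$ for $j>2$ by the definition of regularity, so all minimal generators have degree at most~$2$; on the other hand, since $G$ is connected, Proposition~\ref{linear forms} gives $(I_G)_1=0$, so there are no generators of degree~$1$. Hence $I_G$ is generated purely in degree~$2$, and for such an ideal minimality of the free resolution forces $\beta_{i,j}^{S_G}(I_G)=0$ for $j<i+2$; together with $\reg_{S_G} I_G=2$ (which gives $\beta_{i,j}^{S_G}(I_G)=0$ for $j>i+2$) this yields a $2$-linear resolution, proving (b)~$\implies$~(a).

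Next I would deduce the equivalence with (c) from Theorem~\ref{linear resolution}. For (a)~$\implies$~(c), note that a $2$-linear resolution is in particular a $d$-linear resolution with $d=2\geq 2$, so Theorem~\ref{linear resolution} restricts $G$ to $P_3$, $K_2\sharp_{K_1} K_3$, or $K_4$; but $I_{K_4}$ is generated in degree~$4$ by Example~\ref{K_4} and so admits no $2$-linear resolution, leaving $G=P_3$ or $G=K_2\sharp_{K_1} K_3$. Conversely, (c)~$\implies$~(a) is exactly the content already recorded in the proof of Theorem~\ref{linear resolution}, where $I_{P_3}$ and $I_{K_2\sharp_{K_1} K_3}$ are shown to have $2$-linear resolutions.

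The genuinely nonformal step is the converse (b)~$\implies$~(a): everything else is either definitional or a direct appeal to Theorem~\ref{linear resolution} and Example~\ref{K_4}. The crux is the upgrade from the numerical condition $\reg_{S_G} I_G=2$ to an actual $2$-linear resolution, and the key that makes this work is that connectedness (via Proposition~\ref{linear forms}) eliminates degree-$1$ generators, so that the regularity bound on the top generator degree pins $I_G$ down to a single generating degree.
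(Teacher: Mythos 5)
Your proposal is correct and follows essentially the same route as the paper: the paper likewise deduces the corollary from Theorem~\ref{linear resolution}, using Proposition~\ref{linear forms} to rule out degree-one generators (so that $\reg_{S_G} I_G=2$ forces generation purely in degree~$2$ and hence a $2$-linear resolution) and Example~\ref{K_4} to exclude $K_4$, whose cut ideal is generated in degree~$4$. You merely spell out the standard homological bookkeeping that the paper leaves implicit.
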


\begin{Remark}\label{reg2-disconnected}
{\em We would like to remark that if $G$ is a disconnected graph with connected components $G_1$ and $G_2$, then by using Proposition~\ref{disconnected} and \cite[Remark~2.1]{BCR}, one obtains that $\reg_{S_G} I_G=\reg_{S_{G_1\sharp_{K_1} G_2}} I_{G_1\sharp_{K_1} G_2}$. One can deduce a similar statement in the case of more connected components.}
\end{Remark}

The discussion so far yields a characterization of all graphs for which the cut ideal has ``small" regularity. It is reasonable to study the regularity of cut ideals in general. To the knowledge of the authors the only class of graphs for which an exact formula for the regularity of their cut ideals is known are trees. More precisely, if $T=(V,E)$ is a tree, then by \cite[Proposition~4.4]{NP},
\[
\reg_{S_T}I_T=|E|.
\]
Also, in some special cases, the regularity of cut ideals is bounded above by the number of edges plus one. For example the class of ring graphs is one of those cases (see, e.g., \cite[Corollary~6.5~and~Remark~6.6~(ii)]{NP}).

Applying Corollary~\ref{greatest generator}~(b), one can play a bit more with this invariant in some cases. In the following, we discuss such an example where a nontrivial lower bound for the regularity of the cut ideal of a special family of graphs is obtained.

\begin{Example}\label{unicyclic}
{\em Recall that a \emph{unicyclic graph} is a graph which has exactly one cycle as an induced subgraph. In particular, a unicyclic graph $G=(V,E)$ with $|V|=n$ is by definition a ring graph with $|E|=n-c+1$, where $c$ is the number of connected components of $G$.

Now, let $G$ be a connected unicyclic graph whose unique cycle is isomorphic to $C_m$ for some $m\geq 3$. By contracting $m-2$ edges of this cycle, we get a tree $T$ with $n-m+2$ vertices and $n-m+1$ edges. It follows from \cite[Proposition~4.4]{NP} that $\reg_{S_T} I_T=n-m+1$. Thus
\[
n-m+1\leq \reg_{S_G} I_G \leq n+1,
\]
where the lower bound follows from Corollary~\ref{greatest generator}~(b), since $\reg_{S_G} I_G\geq \reg_{S_T} I_T$. The upper bound is the one given in \cite[Corollary~6.5]{NP}.
}
\end{Example}

The following example shows that for a given natural number $r$, on can construct infinitely many graphs whose cut ideals have regularity at least $r$.

\begin{Example}\label{given reg}
{\em Suppose that $r,n\in \NN$ are given such that $n>r+1$. Also, let $T$ be any tree with $r+1$ vertices. We know by \cite[Proposition~4.4]{NP} that $\reg_{S_T} I_T=r$. Next let $H$ be any graph with $n-r-1$ vertices. Moreover, let $G$ be a graph with $n$ vertices such that $T$ is a combinatorial retract of it. Then
\[
\reg_{S_G} I_G \geq \reg_{S_T} I_T=r.
\]
Such a graph $G$ can be constructed as follows. One possibility is by edge contractions. It is also possible to choose $G$ as a clique-sum of $T$ and $H$ along $K_1$ or $K_2$. As a third option, $T$ can be a neighborhood-minor of $G$, i.e. $G$ can be obtained by joining at least one vertex of $H$ to at least one vertex  of any induced subgraph of $T$ which is isomorphic to a star graph.
}
\end{Example}

As we saw in Theorem~\ref{linear resolution} that for which graphs $G$ the minimal graded free resolution of $I_G$ is linear, it is also reasonable to ask for which cut ideals the resolution is linear up to a certain step.

Recall that a nonzero ideal $I$ in a polynomial ring $R=\KK[x_1,\ldots,x_n]$ is said to satisfy property $N_p$ , if it is generated in degree $2$, and its minimal graded free resolution is linear up to the $p$-th homological degree, i.e.
$\beta_{i,i+j}^R(I)=0$, for all $i\leq p$ and $j\neq 2$.

Note that, in Corollary~\ref{single degree}, cut ideals with property $N_0$ have been classified. It is natural to ask whether the property $N_p$ can be characterized in our setting. We concentrate here on the property $N_1$. Note that an ideal satisfying property $N_1$ is also called \emph{linearly presented}.

The next consequence of Corollary~\ref{greatest generator} is the following sufficient condition for satisfying the property $N_1$:

\begin{Corollary}\label{linearly presented}
Let $G=(V,E)$ be a graph with $|E|\geq 1$, and assume that $I_G$ satisfies property~$N_1$. Then $G$ is a
$(K_4,K_4\setminus e,C_4)$-combinatorial retract-free graph.
\end{Corollary}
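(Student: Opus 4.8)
The plan is to argue by contraposition, leaning entirely on the monotonicity of graded Betti numbers under combinatorial retracts established in Corollary~\ref{greatest generator}~(a). The first thing to pin down is exactly what property~$N_1$ demands of $I_G$: by definition it forces $\beta_{0,j}^{S_G}(I_G)=0$ for all $j\neq 2$ (generation in degree~$2$) together with $\beta_{1,j}^{S_G}(I_G)=0$ for all $j\neq 3$ (linearity of the first syzygies). So I would suppose, for contradiction, that $G$ admits a combinatorial retract $G'$ isomorphic to one of $K_4$, $K_4\setminus e$ or $C_4$, and in each case exhibit a graded Betti number of $I_{G'}$ which, being an isomorphism invariant, coincides with that of the corresponding named graph, and which by Corollary~\ref{greatest generator}~(a) bounds from below the same Betti number of $I_G$, yet is incompatible with one of the vanishing conditions above.

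For the case $G'\cong K_4$ I would invoke Example~\ref{K_4} (see also Remark~\ref{single degree K_n}), where $I_{K_4}$ is shown to be a principal ideal generated in degree~$4$; thus $\beta_{0,4}^{S_{K_4}}(I_{K_4})=1$, and Corollary~\ref{greatest generator}~(a) yields $\beta_{0,4}^{S_G}(I_G)\geq 1$. This directly contradicts generation of $I_G$ in degree~$2$, which is part of property~$N_1$.

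For the cases $G'\cong K_4\setminus e$ and $G'\cong C_4$ I would reuse the computation already carried out inside the proof of Theorem~\ref{linear resolution}: in both cases $\KK[G']$ is a complete intersection (by Theorem~\ref{complete intersection}) whose cut ideal is generated by quadrics but nonetheless satisfies $\beta_{1,4}^{S_{G'}}(I_{G'})\neq 0$. Applying Corollary~\ref{greatest generator}~(a) gives $\beta_{1,4}^{S_G}(I_G)\geq \beta_{1,4}^{S_{G'}}(I_{G'})>0$, contradicting the requirement $\beta_{1,j}^{S_G}(I_G)=0$ for $j\neq 3$ imposed by $N_1$. Collecting the three cases shows that no combinatorial retract of $G$ can be isomorphic to $K_4$, $K_4\setminus e$ or $C_4$, which is exactly the asserted combinatorial retract-freeness.

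Since all of the required Betti-number data for the three small graphs are already furnished by the preceding results, I do not anticipate any substantial obstacle; the only delicate point is bookkeeping, namely spelling out precisely which vanishing property~$N_1$ forces so that each of the three nonzero Betti numbers is matched against the condition it violates.
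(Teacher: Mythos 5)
Your proposal is correct and follows essentially the same route as the paper: for $K_4\setminus e$ and $C_4$ the argument is identical (complete intersection on at least two quadrics forces $\beta_{1,4}\neq 0$, then Corollary~\ref{greatest generator}~(a) contradicts $N_1$). The only divergence is the $K_4$ case, where the paper passes through Corollary~\ref{single degree} and Remark~\ref{combin retract K_n} to conclude $K_4$-combinatorial-retract-freeness from $K_4$-minor-freeness, whereas you apply the Betti-number monotonicity directly to $\beta_{0,4}^{S_{K_4}}(I_{K_4})=1$; both are valid, and your variant treats all three forbidden retracts uniformly.
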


\begin{proof}
Since $I_G$ satisfies property $N_1$, it follows by definition that $I_G$ is generated by quadrics, and hence $G$ is connected and $K_4$-minor-free by Proposition~\ref{disconnected} and Corollary~\ref{single degree}, respectively. Then, by Remark~\ref{combin retract K_n}, the graph $G$ is $K_4$-combinatorial retract-free.

We have that $\beta_{1,4}^{S_{K_4\setminus e}}(I_{K_4\setminus e})\neq 0$ and $\beta_{1,4}^{S_{C_4}}(I_{C_4})\neq 0$, because the cut algebras of $K_4\setminus e$ and $C_4$ are complete intersections by Theorem~\ref{complete intersection}, and their cut ideals which are not principal ideals by Proposition~\ref{projective dimension}, are generated by quadrics by Corollary~\ref{single degree}. Thus, if $G$ has a combinatorial retract isomorphic to $K_4\setminus e$ or $C_4$, then Corollary~\ref{greatest generator}~(a) implies that $\beta_{1,4}^{S_G}(I_G)\neq 0$, a contradiction to having property $N_1$. Hence, $G$ is $(K_4,K_4\setminus e,C_4)$-combinatorial retract-free.
\end{proof}

We would like to end this section by posing the following problem which is verified by computations for the graphs up to $5$ vertices; see Table~\ref{table:1}.

\begin{Problem}\label{linearly presented problem}
{\em What is the complete characterization of cut ideals of graphs which satisfy the property $N_1$? Is the sufficient condition given in Corollary~\ref{linearly presented} a necessary condition as well? }
\end{Problem}

\section{Examples and further remarks}\label{Examples}

In this section, we present some examples which have been essential in the literature and, in particular, in this paper. We also provide a table as a summary of some useful information about graphs with at most five vertices.

We start with two following examples concerning the complete graphs $K_4$ and $K_5$ as we mentioned in Section~\ref{properties and examples}.
Indeed, there exist some information about them in \cite[Table~1]{SS} which was determined by computations. In the following examples we study these two cases including rigorous proofs of ``well-known" facts. Recall the different gradings we introduced in Section~\ref{properties and examples} and will be used in the following.

\begin{Example}\label{K_4}
{\em We consider the complete graph $K_4$ on the vertex set $\{1,2,3,4\}$. By (\ref{height}), we have $\height I_{K_4}=1$ which implies that $I_{K_4}$ is a principal ideal, since it is a  prime ideal in the polynomial ring $S_G$. We note that it is easy to see that $u_1u_2u_3u_4=u_{\emptyset}u_{12}u_{13}u_{14}$, which implies that the binomial $f:=q_1q_2q_3q_4-q_{\emptyset}q_{12}q_{13}q_{14}$ belongs to $I_{K_4}$. We claim that $I_{K_4}=\langle f\rangle$.

Now, we prove the claim. Assume that $I_{K_4}=\langle g \rangle$ where $g$ is a pure binomial in $S_{K_4}$. It follows that $\deg(g)\leq 4$, because $\deg(f)=4$. On the other hand, since $K_4$ is connected, we have $\deg(g)\geq 2$ by Proposition~\ref{linear forms}. Since $f\in \langle g \rangle$, we have $f=hg$ for some homogeneous polynomial $h\in S_{K_4}$.

First suppose that $\deg(g)=2$, say $g=q_Aq_B-q_Cq_D$. Since $I_{K_4}$ is a prime ideal containing no linear forms, it follows that $q_Aq_B$ and $q_Cq_D$ have no common factors. Since $q_1q_2q_3q_4$, is in the support of $f$, it follows that either $q_Aq_B$ or $q_Cq_D$ divides $q_1q_2q_3q_4$. We may assume that $q_Aq_B|q_1q_2q_3q_4$, so that $|A|=|B|=1$. Without loss of generality, let $A=\{1\}$ and $B=\{2\}$. Then we have $\deg_s(q_1q_2)=6$, because $\deg_s(q_1)=\deg_s(q_2)=3$.

So, we deduce that $\deg_s(q_Cq_D)=6$ which implies that $|C|=|D|=1$. The latter follows, because if $|C|=0$ or $2$, then $\deg_s(q_C)=0$ or $4$, respectively, (similarly for $|D|$), and hence one can not get $6$ as $s$-degree of $q_Cq_D$. Hence, without loss of generality, let $C=\{3\}$ and $D=\{4\}$. But, this is a contradiction, since neither $q_1q_2$ nor $q_3q_4$ divides $q_{\emptyset}q_{12}q_{13}q_{14}$ which is also in the support of $f=hg$. Thus $\deg(g)\neq 2$.

Next assume that $\deg(g)=3$, say $g=q_Aq_Bq_C-q_Dq_Eq_F$. By the same argument as above, the monomials $q_Aq_Bq_C$ and $q_Dq_Eq_F$ do not have any common factors, and one of them, say $q_Aq_Bq_C$, divides $q_1q_2q_3q_4$. We may assume without loss of generality that $q_Aq_Bq_C=q_1q_2q_3$. Since $\deg_s(q_1q_2q_3)=9$, it also follows that $\deg_s(q_Dq_Eq_F)=9$ which is the case if and only if $|D|=|E|=|F|=1$, again by comparing the $s$-degrees of the variables. Therefore, $q_Aq_Bq_C$ and $q_Dq_Eq_F$ must have at least a variable as a common factor, because $K_4$ has only $4$ vertices, which is a contradiction. Hence
$\deg(g)\neq 3$.

Altogether we obtain that $\deg(g)=4$, which obviously implies that $g=f$ and $I_{K_4}=\langle f \rangle$, as we claimed. In particular, $S_{K_4}/I_{K_4}$ is a complete intersection, it is minimally resolved by the Koszul complex, and $\projdim_{S_{K_4}} S_{K_4}/I_{K_4}=1$.
}
\end{Example}

\begin{Example}\label{K_5}
{\em We consider the graph $K_5$ on the vertex set $V=\{1,2,3,4,5\}$. Here, we show that $\depth \KK[K_5]=1$, which then implies that $\KK[K_5]$ is not Cohen-Macaulay, according to the fact that
$\dim \KK[K_5]=11$ by (\ref{dim}). This, in addition, implies that $\KK[K_5]$ is not normal as well.

Let $T:=\KK[K_5]$. Note that $u_{\emptyset}$ is not a zero-divisor of $T$ which is an integral domain. In order to see that $\depth T=1$, it suffices to show that
the unique graded maximal ideal $\mathfrak{m}$ of the standard graded $\KK$-algebra $T$ is an associated prime ideal of $T/\langle u_{\emptyset}\rangle$, or equivalently, that there exists an element $f$ in $T$ such that $f\notin \langle u_{\emptyset}\rangle$ and $\mathfrak{m}=\langle u_{\emptyset}\rangle:_T f$. We set $f:=u_1u_2u_3u_4u_5$, and show that it has the desired properties. Observe that
\[
u_1u_2u_3u_{45}=u_{\emptyset}u_{12}u_{13}u_{23}
\]
and
\[
{u_1}^2u_2u_3u_4u_5={u_{\emptyset}}^2u_{12}u_{13}u_{14}u_{15}.
\]
Then, using the symmetry in $K_5$, we get the relations
\begin{equation}\label{relation1}
u_iu_ju_ku_{\ell p}=u_{\emptyset}u_{ij}u_{ik}u_{jk}
\end{equation}
and
\begin{equation}\label{relation2}
{u_i}^2u_ju_ku_{\ell}u_p={u_{\emptyset}}^2u_{ij}u_{ik}u_{i\ell}u_{ip},
\end{equation}
where $\{i,j,k,\ell,p\}=\{1,2,3,4,5\}$. Hence, it follows by the relations given in (\ref{relation1}) and (\ref{relation2}) that
$\mathfrak{m}\subseteq \langle u_{\emptyset}\rangle:_T f$, because the generators of $\mathfrak{m}$ are the monomials $u_{\emptyset}$, $u_i$'s and $u_{jk}$'s for
$i,j,k\in V$.

Next, we show that $f\notin \langle u_{\emptyset}\rangle$. Suppose on the contrary that $f\in \langle u_{\emptyset}\rangle$. Then, since $f$ is homogeneous with respect to multigrading, it follows that
\begin{equation}\label{zero devisor}
f=u_{\emptyset}u_{A}u_{B}u_{C}u_{D}
\end{equation}
for some subsets $A,B,C,D$ of $V$. Now, we use the $s$-degrees as in Example~\ref{K_4}. We have $\deg_s(u_{\emptyset})=0$,  $\deg_s(u_i)=4$, and $\deg_s(u_{jk})=6$ for distinct vertices $i,j,k\in V$. Thus $\deg_s(f)=20$, and hence according to (\ref{zero devisor}) we get $\deg_s(u_{A}u_{B}u_{C}u_{D})=20$. This is the case if and only if, up to a relabeling, $|A|=|B|=1$ and $|C|=|D|=2$. Without loss of generality, we assume that $A=\{1\}$ and $B=\{2\}$. Then, it follows from  (\ref{zero devisor}), that $u_3u_4u_5=u_{\emptyset}u_{C}u_{D}$, since $T$ is an integral domain. But, the latter equality can not occur, because $s^2_{34}s^2_{35}s^2_{45}$ divides the left-hand side. Indeed, this means that the edges $\{3,4\},\{3,5\},\{4,5\}$ belong to $\mathrm{Cut}(C)$ as well as to $\mathrm{Cut}(D)$, which is impossible. Therefore, we deduce that $f\notin \langle u_{\emptyset}\rangle$, as desired. This yields $\mathfrak{m}=\langle u_{\emptyset}\rangle:_T f$. In particular, by the Auslander-Buchsbaum formula, we have
\[
\projdim_{S_{K_5}} S_{K_5}/I_{K_5}=2^4-\depth S_{K_5}/I_{K_5}=15.
\]
}
\end{Example}

In Example~\ref{K_4}, it was shown that $I_{K_4}$ is a principal ideal, and in Example~\ref{K_5} some of the relations of $\KK[K_5]$ were presented. Indeed, (\ref{relation1}) and (\ref{relation2}) give us a combinatorial description of fifteen relations of degree~$4$, and five relations of degree~$6$, respectively. Note that the number of minimal generators of $I_{K_5}$ and $I_{K_6}$, respectively is known by \cite[Table~1]{SS}. For $n\geq 7$, not much more is known to the knowledge of the authors.
We would like to pose the following problem:

\begin{Problem}\label{problemK_n}
{\em Is there any nice combinatorial description of the generators of the cut ideal $I_{K_n}$ for $n\geq 5$? Moreover, it seems reasonable to investigate several algebraic properties of these ideals, as studied in the two cases here.}
\end{Problem}

In the  following  we consider some other graphs with small number of vertices which have been used throughout the paper.

\begin{Example}\label{another small}
{\em
\begin{enumerate}
\item [(a)] Let $G=P_3$ with $V(G)=\{1,2,3\}$, $E(G)=\{\{1,2\},\{2,3\}\}$. Note that by Proposition~\ref{projective dimension}, we have that $\projdim_{S_G}I_G=0$, and hence $I_G$ is a principal ideal. Moreover, it was observed in \cite[Example~2.3]{SS} that
\[
I_{G}=\langle q_{\emptyset}q_{2}-q_{1}q_{12}\rangle.
\]
\item [(b)] Let $G=C_4$ with $V(G)=\{1,2,3,4\}$ and
\[
E(G)=\{\{1,2\},\{2,3\},\{3,4\},\{1,4\}\}.
\]
As it was mentioned in \cite[Example~1.2]{SS}, a computation shows that
\[
I_{G}=\langle q_{\emptyset}q_{13}-q_{1}q_{3}, q_{\emptyset}q_{13}-q_{2}q_{4}, q_{\emptyset}q_{13}-q_{12}q_{14}\rangle
\]
which defines a complete intersection.
\item [(c)] Let $G=K_2\sharp_{K_1} K_3$ with $V(G)=\{1,2,3,4\}$ and
\[
E(G)=\{\{1,2\},\{2,3\},\{3,4\},\{2,4\}\}.
\]
Then a computation, e.g., with \emph{Macaulay2} (see \cite{GS}), shows that the Betti diagram of $S_G/I_G$ is the following which in particular shows that $I_G$ has a $2$-linear resolution:
{
\begin{verbatim}
                      0     1     2     3
               ----------------------------
               0:     1     -     -     -
               1:     -     6     8     3
               ----------------------------
           total:     1     6     8     3
\end{verbatim}
}

\item [(d)] Let $G=K_4\setminus e$. As it was observed in \cite[page~693]{SS},
\[
I_G=\langle q_{\emptyset}q_{14}-q_{1}q_{4}, q_2q_3-q_{12}q_{13} \rangle
\]
which defines a complete intersection.
\item [(e)] Let $G=K_5\setminus e$. As it was mentioned in \cite[Example~2.5]{SS}, a computation, e.g., with \emph{Macaulay2} (see \cite{GS}), shows that $I_G$ has $35$ minimal generators, and the Betti diagram of $S_G/I_G$ is the following:
{
\begin{verbatim}
             0     1     2     3     4     5     6
      ----------------------------------------------
      0:     1     -     -     -     -     -     -
      1:     -     4     -     -     -     -     -
      2:     -     -     6     -     -     -     -
      3:     -    31   128   200   128    31     -
      4:     -    -     -     -     6      -     -
      5:     -    -     -     -     -      4     -
      6:     -    -     -     -     -      -     1
      ----------------------------------------------
  total:     1   35   134   200   134     35     1
\end{verbatim}
}

\end{enumerate}
}
\end{Example}

Finally we present a table summarizing some information about all the graphs up to five vertices which do not have any isolated vertices and have nonzero cut ideals. The list of such graphs is taken from \cite[Appendix~1]{Ha}. Here we order the graphs in terms of the number of vertices, and for those which have the same number of vertices, the order is based on the number of edges. Those graphs for which there is no well-known notation, are denoted by $G_i$ for $i=1,\ldots,10$, and they are depicted in Figure~\ref{fig}.

\begin{figure}[h!]
\centering
{\begin{tikzpicture}[scale = 1]
\begin{scope}
\draw (0,0) node[punkt] {} -- (1,0) node[punkt] {};
\draw (0,0) node[punkt] {} -- (0,1) node[punkt] {};
\draw (0,0) node[punkt] {} -- (0.5,1.5) node[punkt] {};
\draw (1,0) node[punkt] {} -- (1,1) node[punkt] {};
\node [below] at (0.5,0) {$G_1$};
\end{scope}
\end{tikzpicture}\quad \quad \quad
\begin{tikzpicture}[scale = 1]
\begin{scope}
\draw (0,0) node[punkt] {} -- (1,0) node[punkt] {};
\draw (0,0) node[punkt] {} -- (0,1) node[punkt] {};
\draw (0,1) node[punkt] {} -- (1,0) node[punkt] {};
\draw (0,1) node[punkt] {} -- (0.5,1.5) node[punkt] {};
\draw (0.5,1.5) node[punkt] {} -- (1,1) node[punkt] {};
\node [below] at (0.5,0) {$G_2$};
\end{scope}
\end{tikzpicture}\quad \quad \quad
\begin{tikzpicture}[scale = 1]
\begin{scope}
\draw (0,0) node[punkt] {} -- (1,0) node[punkt] {};
\draw (0,0) node[punkt] {} -- (0,1) node[punkt] {};
\draw (0,1) node[punkt] {} -- (1,0) node[punkt] {};
\draw (0,1) node[punkt] {} -- (0.5,1.5) node[punkt] {};
\draw (1,0) node[punkt] {} -- (1,1) node[punkt] {};
\node [below] at (0.5,0) {$G_3$};
\end{scope}
\end{tikzpicture}\quad \quad \quad
\begin{tikzpicture}[scale = 1]
\begin{scope}
\draw (0,0) node[punkt] {} -- (1,0) node[punkt] {};
\draw (0,0) node[punkt] {} -- (0,1) node[punkt] {};
\draw (0,1) node[punkt] {} -- (1,0) node[punkt] {};
\draw (0,1) node[punkt] {} -- (0.5,1.5) node[punkt] {};
\draw (0,1) node[punkt] {} -- (1,1) node[punkt] {};
\node [below] at (0.5,0) {$G_4$};
\end{scope}
\end{tikzpicture}\quad \quad \quad
\begin{tikzpicture}[scale = 1]
\begin{scope}
\draw (0,0) node[punkt] {} -- (1,0) node[punkt] {};
\draw (0,0) node[punkt] {} -- (0,1) node[punkt] {};
\draw (0,0) node[punkt] {} -- (1,1) node[punkt] {};
\draw (0,1) node[punkt] {} -- (1,1) node[punkt] {};
\draw (1,0) node[punkt] {} -- (1,1) node[punkt] {};
\draw (1,1) node[punkt] {} -- (0.5,1.5) node[punkt] {};
\node [below] at (0.5,0) {$G_5$};
\end{scope}
\end{tikzpicture}\\
\begin{tikzpicture}[scale = 1]
\begin{scope}
\draw (0,0) node[punkt] {} -- (1,0) node[punkt] {};
\draw (0,0) node[punkt] {} -- (0,1) node[punkt] {};
\draw (0,0) node[punkt] {} -- (1,1) node[punkt] {};
\draw (0,1) node[punkt] {} -- (1,1) node[punkt] {};
\draw (1,0) node[punkt] {} -- (1,1) node[punkt] {};
\draw (0,1) node[punkt] {} -- (0.5,1.5) node[punkt] {};
\node [below] at (0.5,0) {$G_6$};
\end{scope}
\end{tikzpicture}\quad \quad \quad
\begin{tikzpicture}[scale = 1]
\begin{scope}
\draw (0,0) node[punkt] {} -- (1,0) node[punkt] {};
\draw (0,0) node[punkt] {} -- (0,1) node[punkt] {};
\draw (0,0) node[punkt] {} -- (1,1) node[punkt] {};
\draw (0,1) node[punkt] {} -- (1,1) node[punkt] {};
\draw (1,0) node[punkt] {} -- (1,1) node[punkt] {};
\draw (1,1) node[punkt] {} -- (0.5,1.5) node[punkt] {};
\draw (0,1) node[punkt] {} -- (0.5,1.5) node[punkt] {};
\node [below] at (0.5,0) {$G_7$};
\end{scope}
\end{tikzpicture}\quad \quad \quad
\begin{tikzpicture}[scale = 1]
\begin{scope}
\draw (0,0) node[punkt] {} -- (0,1) node[punkt] {};
\draw (0,0) node[punkt] {} -- (1,1) node[punkt] {};
\draw (0,1) node[punkt] {} -- (1,1) node[punkt] {};
\draw (1,0) node[punkt] {} -- (1,1) node[punkt] {};
\draw (1,1) node[punkt] {} -- (0.5,1.5) node[punkt] {};
\draw (0,1) node[punkt] {} -- (0.5,1.5) node[punkt] {};
\draw (0,1) node[punkt] {} -- (1,0) node[punkt] {};
\node [below] at (0.5,0) {$G_8$};
\end{scope}
\end{tikzpicture}\quad \quad \quad
\begin{tikzpicture}[scale = 1]
\begin{scope}
\draw (0,0) node[punkt] {} -- (0,1) node[punkt] {};
\draw (0,0) node[punkt] {} -- (1,0) node[punkt] {};
\draw (0,1) node[punkt] {} -- (1,1) node[punkt] {};
\draw (1,0) node[punkt] {} -- (1,1) node[punkt] {};
\draw (1,0) node[punkt] {} -- (0.5,1.5) node[punkt] {};
\draw (1,1) node[punkt] {} -- (0.5,1.5) node[punkt] {};
\draw (0,1) node[punkt] {} -- (0.5,1.5) node[punkt] {};
\node [below] at (0.5,0) {$G_9$};
\end{scope}
\end{tikzpicture}\quad \quad \quad
\begin{tikzpicture}[scale = 1]
\begin{scope}
\draw (0,0) node[punkt] {} -- (0,1) node[punkt] {};
\draw (0,0) node[punkt] {} -- (1,0) node[punkt] {};
\draw (0,1) node[punkt] {} -- (1,1) node[punkt] {};
\draw (1,0) node[punkt] {} -- (1,1) node[punkt] {};
\draw (1,0) node[punkt] {} -- (0.5,1.5) node[punkt] {};
\draw (1,1) node[punkt] {} -- (0.5,1.5) node[punkt] {};
\draw (0,1) node[punkt] {} -- (0.5,1.5) node[punkt] {};
\draw (0,0) node[punkt] {} -- (0.5,1.5) node[punkt] {};
\node [below] at (0.5,0) {$G_{10}$};
\end{scope}
\end{tikzpicture}
\caption{Some graphs from Table~\ref{table:1}}
\label{fig}
}
\end{figure}
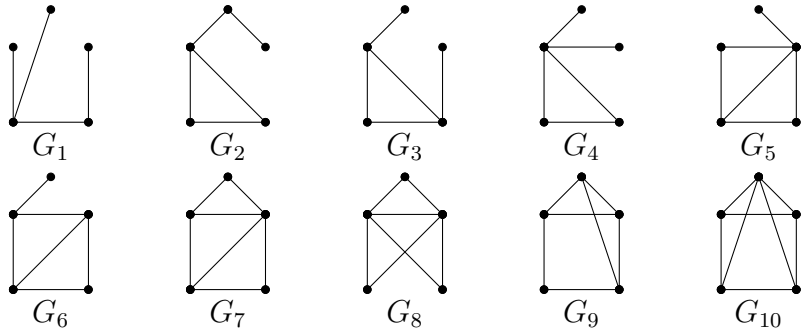

Beside the theoretical results, the data in Table~\ref{table:1} are based on computations by \emph{CoCoA} (see \cite{AB}) and \emph{Macaulay2} (see \cite{GS}). The terms used in the table for a graph $G=(V,E)$ are as follows:

\begin{itemize}
\item mindeg/maxdeg: the minimum/maximum degree in a minimal generating set of $I_G$;
\item projdim/reg: $\projdim_{S_G} I_G(=\projdim_{S_G} S_G/I_G-1)$/$\reg_{S_G} I_G$;
\item CM/Nor./Gor./C.I./ $N_1$: Cohen-Macaulay/Normal/Gorenstein/ Complete intersection/property $N_1$;
\item Y/N: Yes/No.
\end{itemize}
\begin{table}[ht]
\caption{Some properties and invariants of graphs (without isolated vertices) on $n\leq 5$ vertices. }
\centering
\begin{tabular}{|c|c|c|c|c|c|c|c|c|c|c|}
\hline
Graph & $|E|$ & mindeg & maxdeg &  projdim & reg & CM & Nor. & Gor. & C.I. & $N_1$  \\ [1ex]
\hline
\hline
$P_3$ & 2 & 2 & 2 & 0 & 2 & Y & Y & Y & Y & Y \\ \hline

$2K_2$ & 2 & 1 & 2 & 4 & 2 & Y & Y & Y & Y & N \\ \hline

$P_4$ & 3 & 2 & 2 & 3 & 3 & Y & Y & Y & N & Y \\ \hline

$K_{1,3}$ & 3 & 2 & 2 & 3 & 3 & Y & Y & Y & N & Y \\ \hline

$K_2\sharp_{K_1} K_3$ & 4 & 2 & 2 & 2 & 2 & Y & Y & N & N & Y \\ \hline

$C_4$ & 4 & 2 & 2 & 2 & 4 & Y & Y & Y & Y & N \\ \hline

$K_4\setminus e$ & 5 & 2 & 2 & 1 & 3 & Y & Y & Y & Y & N \\ \hline

$K_4$ & 6 & 4 & 4 & 0 & 4 & Y & Y & Y & Y & N \\ \hline

$K_2\sqcup P_3$ & 3 & 1 & 2 & 11 & 3 & Y & Y & Y & N & N \\ \hline

$K_2\sqcup K_3$ & 4 & 1 & 2 & 10 & 2 & Y & Y & N & N & N \\ \hline

$P_5$ & 4 & 2 & 2 & 10 & 4 & Y & Y & Y & N & Y \\ \hline

$K_{1,4}$ & 4 & 2 & 2 & 10 & 4 & Y & Y & Y & N & Y \\ \hline

$G_1$ & 4 & 2 & 2 & 10 & 4 & Y & Y & Y & N & Y \\ \hline

$G_2$ & 5 & 2 & 2 & 9 & 3 & Y & Y & N & N & Y \\ \hline

$G_3$ & 5 & 2 & 2 & 9 & 3 & Y & Y & N & N & Y \\ \hline

$G_4$ & 5 & 2 & 2 & 9 & 3 & Y & Y & N & N & Y \\ \hline

$K_2\sharp_{K_1} C_4$ & 5 & 2 & 2 & 9 & 5 & Y & Y & Y & N & N \\ \hline

$C_5$ & 5 & 2 & 2 & 9 & 4 & Y & Y & N & N & N \\ \hline

$K_3\sharp_{K_1} K_3$ & 6 & 2 & 2 & 8 & 4 & Y & Y & Y & N & Y \\ \hline

$G_5$ & 6 & 2 & 2 & 8 & 4 & Y & Y & N & N & N \\ \hline

$G_6$ & 6 & 2 & 2 & 8 & 4 & Y & Y & N & N & N \\ \hline

$K_3\sharp_{K_2} C_4$ & 6 & 2 & 2 & 8 & 5 & Y & Y & N & N & N \\ \hline

$C_4\sharp_{P_3} C_4$ & 6 & 2 & 2 & 8 & 6 & Y & Y & Y & N & N \\ \hline

$G_7$ & 7 & 2 & 2 & 7 & 5 & Y & Y & Y & N & N \\ \hline

$G_8$ & 7 & 2 & 2 & 7 & 5 & Y & Y & Y & N & N \\ \hline

$K_2\sharp_{K_1} K_4$ & 7 & 2 & 4 & 7 & 5 & Y & Y & N & N & N \\ \hline

$G_9$ & 7 & 2 & 2 & 7 & 5 & Y & Y & N & N & N \\ \hline

$K_3\sharp_{K_2} K_4$ & 8 & 2 & 4 & 6 & 5 & Y & Y & Y & N & N \\ \hline

$G_{10}$ & 8 & 2 & 2 & 6 & 6 & Y & Y & N & N & N \\ \hline

$K_5\setminus e$ & 9 & 2 & 4 & 5 & 7 & Y & Y & Y & N & N \\ \hline

$K_5$ & 10 & 4 & 6 & 14 & $\geq 6$ & N & N & N & N & N \\ \hline

\end{tabular}
\label{table:1}
\end{table}

\end{document}